\newtheorem{thm}{Theorem}[section]
\newtheorem{prop}[thm]{Proposition}
\newtheorem{cor}[thm]{Corollary}
\newtheorem{lemma}[thm]{Lemma}
\newtheorem{defn}[thm]{Definition}
\newtheorem{preremark}[thm]{Remark}
\newenvironment{remark}{\begin{preremark}\rm}{\medskip \end{preremark}}
\numberwithin{equation}{section}
\newcommand{\norm}[1]{\left\Vert#1\right\Vert}
\newcommand{\abs}[1]{\left\vert#1\right\vert}
\newcommand{\set}[1]{\left\{#1\right\}}
\newcommand{\R}{\mathbb R}
\newcommand{\F}{\mathbb F}
\DeclareMathOperator{\Vol}{Vol}
\DeclareMathOperator{\Span}{span}
\newcommand{\grad} {\nabla}
\newcommand{\bb} {\mathrm{b}}
\newcommand{\dx} {\; \mathrm{d} x}
\newcommand{\dd} {\mathrm{d}}
\DeclareMathOperator{\supp}{supp}
\DeclareMathOperator{\dv}{div}
\DeclareMathOperator{\Def}{Def}
\DeclareMathOperator{\Ric}{Ric}
\DeclareMathOperator{\N}{N-S}
\def\H{\mathbb H^{2}(-a^{2})}
\def\be{\begin{equation}}
\def\ee{\end{equation}}
\newcommand{\V}{\mathbb V}
\newcommand{\tV}{\textbf V}
\DeclareMathOperator{\nablab}{\overline \nabla}
\newcommand{\ip}[1]{\langle{#1}\rangle}
\newcommand{\NS}[1]{\N_{#1}}
\begin{document}
\title[Weak formulation of Navier-Stokes on the 2D hyperbolic space]{Remarks on the weak formulation of the Navier-Stokes equations on the 2D hyperbolic space.}

\author[Chan]{Chi Hin Chan}
\address{Department of Applied Mathematics, National Chiao Tung University,1001 Ta Hsueh Road, Hsinchu, Taiwan 30010, ROC}
\email{cchan@math.nctu.edu.tw}
\author[Czubak]{Magdalena Czubak}
\address{Department of Mathematical Sciences, Binghamton University (SUNY),
Binghamton, NY 13902-6000, USA}
\email{czubak@math.binghamton.edu}

\begin{abstract}
The Leray-Hopf solutions to the Navier-Stokes equation are known to be unique on $\R^{2}$.   In our previous work we showed the breakdown of uniqueness in a hyperbolic setting.  In this article, we show how to formulate the problem in order so the uniqueness can be restored.
\end{abstract}
\date{\today}
\subjclass[2010]{76D05, 76D03;}
\keywords{Navier-Stokes, Leray-Hopf, non-uniqueness, uniqueness, hyperbolic space, harmonic forms}
\maketitle

\tableofcontents

 \section{Introduction}
The Navier-Stokes equation on $\R^{n}$ is given by
\be\tag{$\NS{\R^{n}}$}
\begin{split}
\partial_{t}u -\Delta u + u\cdot \nabla u + \nabla p & = 0 , \\
\dv u& = 0 ,
\end{split}
\ee
where $u=(u_{1}, \dots, u_{n})$ is the velocity of the fluid, $p$ is the pressure, and $\dv u = 0$ means the fluid is incompressible.
Let $T>0$.  Existence of global weak solutions
\be\label{LH1}
u\in L^{\infty}(0, T ; L^{2}(\mathbb{R}^{n}) ) \cap L^{2}(0,T ;\dot H^{1}(\mathbb{R}^{n}))
\ee
 for $n=2, 3$ satisfying the \emph{global energy inequality}
\begin{equation}\label{LH2}
\int_{\mathbb{R}^{n}} |u(t,x)|^{2} \dx + 2 \int_{0}^{t} \int_{\mathbb{R}^{n}}|\nabla
u|^{2} \dx \dd s \leq  \int_{\mathbb{R}^{n}} |u_{0}|^{2} \dd x, \quad 0\leq t \leq T.
\end{equation}
has been established in the work of Leray~\cite{Leray} and  Hopf~\cite{Hopf}.   Solutions satisfying \eqref{LH1}-\eqref{LH2} are referred to as Leray-Hopf solutions.
\newline\indent
The smoothness and uniqueness of Leray-Hopf
solutions for $\NS{\R^{2}}$ is well-known.  In \cite{CC10},  contrary to what is known in the Euclidean 2D setting, we were able to show that there is non-uniqueness in 2D for simply connected manifolds with negative sectional constant curvature (see Theorem \ref{mainthmns} and \ref{thm2ns} below for precise statements).  The goal of this article is to show how we can restore the uniqueness.  In the process, we develop the theory of weak solutions to the Navier-Stokes equations on the 2D hyperbolic space.

Before we state the current results precisely, we briefly review the relevant ones from \cite{CC10}, so we can introduce the required terminology and put the new results in some perspective.  

For a more detailed review of other works in the literature please see \cite{CC10} or \cite{DindosMitrea}.  However,
here we would like to mention that the majority of  the work on the Navier-Stokes equation on manifolds has been done either in a setting of a compact manifold or a bounded subdomain \cite{EbinMarsden, Priebe, MitreaTaylor, DindosMitrea, Mazzucato2003, Taylor3, AvezBamberger, IFilatov, Illin, TemamWang}.

The works that we are aware of on non-compact manifolds are of Q.S. Zhang \cite{QSZhang}, Khesin and Misio\l ek \cite{KhesinMisiolek}, and that of Taylor \cite{Taylor}.  In \cite{QSZhang} the author shows the non-uniqueness of the weak solution with finite $L^{2}$ norm on a connected sum of two copies of $\R^{3}$.  In \cite{KhesinMisiolek} the authors primarily study the stationary Euler equation on the hyperbolic space (but also see Sections \ref{otherD} and \ref{simpler}).   In \cite{Taylor} the author obtains pointwise estimates for the gradient of harmonic functions on the hyperbolic space (also see remarks after \eqref{expdecaydF} and Section \ref{simpler}).

\subsection{Overview of previous results}
In general, we would like to investigate how geometry of an underlying space affects the solutions to the Navier-Stokes equation.
Motivated by the Euclidean problem, we were curious about the non-compact Riemannian manifolds.  When we move from the Euclidean setting to the Riemannian manifold, the first question is how to write the equations.  In particular, what is the natural generalization of the Laplacian, $\Delta$?  This question was addressed by Ebin and Marsden in \cite{EbinMarsden}, where they indicated that the ordinary Laplacian should be replaced by the following
operator
\[
L = 2 \Def^{*}\Def  = (\dd\dd^{*}
+ \dd^{*} \dd) + \dd\dd^{*} -2 \Ric ,
\]
where  $\Def$ and $\Def^{*}$ are the stress tensor and its adjoint respectively,
$(\dd\dd^{*} +
\dd^{*}\dd)= -\Delta$
is the Hodge Laplacian with $\dd^{*}$ as the formal adjoint of the exterior
differential operator $\dd$, and $\Ric$ is the Ricci operator.  Here also recall the Hodge star operator, $\ast$, sends $k$-forms to $n-k$-forms and is defined by  
\be\label{star}
\begin{split}
\alpha \wedge \ast \beta &=g(\alpha ,\beta)\Vol_M.
\end{split}
\ee
Then
\be\label{star2}
\ast \ast \alpha =(-1)^{nk+k}\alpha,
\ee
where $n$ is the dimension of the manifold, and $k$ the degree of $\alpha$, and
\be\label{star3}
\dd ^\ast \alpha=(-1)^{nk+n+1}\ast \dd \ast\alpha.
\ee
First, note that $L$ is the ordinary Laplacian on $\R^{n}$, since then $\Ric \equiv 0$.  Second, $L$
as given above sends $1$-forms into $1$-forms.  Hence, it is more convenient to write the Navier-Stokes equation
on a Riemannian manifold $M$ in terms of $1$-forms $U^{\ast}$ instead
of vector fields $U$ on $M$.  There is a natural correspondence between vector fields $U$ and $1$-forms $U^{\ast}$, which allows us to freely move between the two, and rewrite the equation as
\begin{equation}\tag{$\NS{M}$}
\begin{split}
\partial_{t}U^{*} - \Delta U^{*}  + \overline{\nabla}_{U}U^{*} - 2\Ric(U^{\ast})+ \dd P &= 0,\\
\dd^{\ast}U^{\ast}&=0,
\end{split}
\ee
where $\overline{\nabla}$ stands for the induced Levi-Civita connection on the cotangent
bundle $T^{*}M$.   Arguably less natural equation to study is the one without the Ricci operator
\begin{equation}\label{MNS}
\begin{split}
\partial_{t}U^{*} - \Delta U^{*}  + \overline{\nabla}_{U}U^{*} + \dd P &= 0,\\
\dd^{\ast}U^{\ast}&=0.
\end{split}
\end{equation}
In this article, to simplify notation we use $u$ to denote both the vector field and the corresponding $1$-form.

 In \cite{CC10} we studied both $\NS{M}$ and \eqref{MNS} and we showed

\begin{thm}[Non-uniqueness of $\NS{\H}$\cite{CC10}]\label{mainthmns}
Let $a>0$.  Then, $\NS{\H}$ is ill-posed in the following sense:
there exists smooth $u_{0}\in L^{2}(\H)$, such that there are infinitely many
smooth solutions $u$ satisfying $u(0)=u_0$ and
\begin{align}
\mbox{(finite energy)}&\quad \int_{\H} \abs{u(t)}^{2} < \infty,\label{energydef}\\
\mbox{( finite dissipation)}&\quad \int_{0}^{t} \int_{\H}   \abs{\Def u}^{2}<\infty, \label{dissipationdef}\\
 \mbox{(global energy inequality)} &  \int_{\H} \abs{u(t)}^{2}  + 4 \int_{0}^{t} \int_{\H}      \abs{\Def u}^{2}       \dd s \leq  \int_{\H} |u_{0}|^{2}.  \label{energyM}
\end{align}
\end{thm}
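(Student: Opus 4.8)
The plan is to look for solutions of the separated form $u(t)=f(t)\,\dd g$, where $g$ is a fixed (time-independent) harmonic function on $\H$ and $f$ depends on time alone. The point of this ansatz is that $\dd g$ is an \emph{exact} harmonic $1$-form: since $g$ is harmonic, $\dd^{\ast}\dd g=0$, while $\dd(\dd g)=0$ automatically, so both $\lap(\dd g)=0$ and $\dd^{\ast}(\dd g)=0$. Thus $u$ is divergence free and $\lap u=0$ for every $t$, and the only genuinely nonlinear contribution comes from the transport term.

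First I would reduce the equation using the Cartan-type identity $\nablab_{U}U^{\ast}=\iota_{U}\,\dd u+\tfrac12\,\dd\abs{u}^{2}$, valid when $U$ is the vector field metrically dual to the $1$-form $u$. Because $\dd u=f\,\dd(\dd g)=0$, the transport term collapses to the exact form $\tfrac12 f^{2}\,\dd\abs{\dd g}^{2}$. On $\H$ the Ricci operator acts on $1$-forms as $\Ric=-a^{2}\,\mathrm{Id}$, so $-2\Ric(u)=2a^{2}f\,\dd g=\dd(2a^{2}f g)$ is \emph{also} exact. Hence every term of $\NS{\H}$ is an exact $1$-form, and choosing the pressure $P=-\bigl(f'g+\tfrac12 f^{2}\abs{\dd g}^{2}+2a^{2}f g\bigr)$ makes $u=f\,\dd g$ an exact solution for an \emph{arbitrary} smooth $f$. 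This freedom in $f$ is the whole mechanism behind non-uniqueness: the exactness of $\dd g$ is what lets the time-derivative term $f'\,\dd g$ and the linear Ricci term be absorbed into the pressure, so the profile equation imposes no constraint on $f$ whatsoever.

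The main obstacle is to produce a genuine nonconstant $g$ for which the required integrability holds, namely $\dd g\in L^{2}(\H)$ and $\Def(\dd g)\in L^{2}(\H)$; in the Euclidean plane no such $g$ exists (bounded harmonic functions are constant), and the exponential volume growth of $\H$ is precisely what is needed. I would work in the Poincar\'e disk model $\mathbb{D}$ and exploit the fact that in dimension two the Laplace--Beltrami operator is conformally covariant, so hyperbolic-harmonic functions coincide with ordinary Euclidean-harmonic functions on the disk; moreover the $L^{2}$ norm of a $1$-form is a conformal invariant in two dimensions. Taking for instance $g=\mathrm{Re}\,z$ then gives $\norm{\dd g}_{L^{2}(\H)}^{2}=\norm{\grad g}_{L^{2}(\mathbb{D})}^{2}<\infty$ at once, whereas the finiteness of $\norm{\Def(\dd g)}_{L^{2}}$ — which, unlike the energy, is not conformally protected because $\Def$ involves the Levi-Civita connection — I would verify by a direct computation of the Hessian of $g$ in the model. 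This deformation-tensor estimate is the one genuinely technical step.

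Finally I would check the three displayed conditions for $u=f\,\dd g$. Finite energy and finite dissipation are immediate: $\int_{\H}\abs{u(t)}^{2}=f(t)^{2}\norm{\dd g}_{L^{2}}^{2}$ and $\int_{0}^{t}\!\int_{\H}\abs{\Def u}^{2}=\norm{\Def(\dd g)}_{L^{2}}^{2}\int_{0}^{t}f^{2}$. Setting $A=4\norm{\Def(\dd g)}_{L^{2}}^{2}/\norm{\dd g}_{L^{2}}^{2}$, the global energy inequality \eqref{energyM} reduces to the scalar inequality $f(t)^{2}+A\int_{0}^{t}f^{2}\le f(0)^{2}$. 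Fixing the datum $u_{0}=\dd g$ (so $f(0)=1$), every choice $f(t)=e^{-Bt}$ with $B\ge A/2$ satisfies it, and distinct values of $B$ produce distinct smooth solutions sharing the same $u_{0}$. This yields the claimed infinite family and completes the argument.
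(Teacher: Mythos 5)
Your proposal is correct, and its algebraic core is exactly the paper's: separated solutions $f(t)\,\dd g$ with $g$ harmonic, the transport term collapsing to the exact form $\tfrac12 f^2\,\dd\abs{\dd g}^2$ (this is \cite[Lemma 6.1]{CC10} in the paper's notation), the Ricci term absorbed into the pressure because $\Ric=-a^2\,\mathrm{Id}$ on $\H$, and the energy inequality \eqref{energyM} pinning down the admissible exponential rates --- your family $e^{-Bt}\dd g$, $B\ge A/2$, is the paper's $\psi(t)=\exp(-At/2)$, $A\geq 4a^2$. Where you genuinely depart from \cite{CC10} is in the two analytic inputs that the paper identifies as its main contribution, namely \eqref{energydef} and \eqref{dissipationdef} for $\dd g$. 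The original proof takes $g$ to be a nontrivial \emph{bounded} harmonic function produced by Anderson--Sullivan from $C^1$ Dirichlet data at infinity, derives finite energy from the gradient decay estimate \eqref{expdecaydF} (Anderson--Schoen plus Yau), and obtains finite dissipation through a rather involved $L^1$ estimate on $\abs{\nabla\abs{\nabla F}^2}$. You instead take the explicit choice $g=\mathrm{Re}\,z$ in the Poincar\'e disk, where $\dd g\in L^2(\H)$ is immediate from the conformal invariance of the $1$-form $L^2$-norm in dimension two, and you reduce finite dissipation to a Hessian computation in the model; that computation does close (with the Christoffel symbols of Appendix \ref{appendixa} one finds $\norm{\Def \dd g}_{L^2}^2=a^2\norm{\dd g}_{L^2}^2$, so your constant $A$ is exactly $4a^2$, and the analogous statement for all of $\F$ is Lemma \ref{IMFinitedisspation} together with Lemma \ref{SILEMMA}). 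Your route is essentially the simplification the authors themselves describe in Section \ref{simpler} and credit to Khesin--Misio{\l}ek and Taylor: it buys brevity, explicitness, and a transparent identification of the critical rate $4a^2$. What the paper's heavier route buys is generality: the decay estimate \eqref{expdecaydF} is what allows the construction to survive on variable-curvature manifolds (Theorem \ref{thm2ns}), where no conformal identification with the Euclidean disk is available. The only loose end in your write-up is that the deformation-tensor computation is announced rather than performed; for $g=\mathrm{Re}\,z$ it is elementary, so this is a presentational gap rather than a mathematical one.
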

If we do not include the Ricci term in the equation, we can also have a non-uniqueness result on a more general negatively curved Riemannian manifold.
\begin{thm}\cite{CC10}\label{thm2ns}
Let  $a, b>0$ be such that $\frac 12 b<a\leq b$, and let $M$ be a simply connected, complete $2$-dimensional Riemannian manifold with sectional curvature satisfying $-b^{2} \leq K_{M} \leq -a^{2}$.  Then there exist non-unique solutions to \eqref{MNS} satisfying \eqref{energydef}-\eqref{energyM}.
\end{thm}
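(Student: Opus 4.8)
The plan is to reduce Theorem 1.2 to Theorem 1.1 by exhibiting an explicit family of nontrivial solutions to the reduced equation (MNS) on $M$ and then verifying the three conditions \eqref{energydef}--\eqref{energyM}. The key observation is that the non-uniqueness in Theorem 1.1 must come from a velocity field built out of a \emph{harmonic $1$-form}: if $u^\ast$ is harmonic (so $\dd u^\ast = 0$ and $\dd^\ast u^\ast = 0$, hence $\Delta u^\ast = 0$), then the linear dissipation term drops out, and one has enough room to absorb the remaining terms. On $\mathbb H^2(-a^2)$ the relevant object is the closed-and-coclosed $1$-form coming from integrating against the volume (equivalently $\ast \dd f$ for a suitable harmonic function $f$), which decays fast enough at infinity to be in $L^2$; this is precisely what fails on $\R^2$ and is what breaks uniqueness. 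I would first write down this harmonic form on the model space and record its decay and $L^2$ properties, then build the time-dependent solution family by the same ansatz used for Theorem 1.1.

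The main steps are as follows. First, I would show that on a simply connected complete surface $M$ with $-b^2 \le K_M \le -a^2$, there exists a nontrivial harmonic, divergence-free ($\dd^\ast u^\ast = 0$) $1$-form $u_0^\ast$ lying in $L^2(M)$; the curvature pinching $\tfrac12 b < a \le b$ is what guarantees both existence (enough negative curvature for $L^2$ harmonic forms to appear, unlike the flat case) and the integrability/decay estimates via comparison with the constant-curvature model. Second, for such a harmonic initial datum the equation (MNS) simplifies: since $\Delta u^\ast = 0$, and since for the velocity field associated to a harmonic $1$-form the nonlinear advection term $\nablab_U U^\ast$ should be expressible as an exact form $\dd P$ (this is the crucial structural cancellation — the advection of a gradient-like harmonic field is again a gradient up to the pressure), the stationary field $u^\ast$ together with any time modulation $c(t)u^\ast$ solves the equation after adjusting the pressure. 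This produces a one-parameter family, hence infinitely many solutions with the same initial data, giving non-uniqueness.

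Third, I would verify the energy conditions. Finite energy \eqref{energydef} follows from $u_0^\ast \in L^2$ and the chosen time profile; finite dissipation \eqref{dissipationdef} requires controlling $\int_M |\Def u|^2$, which for a harmonic divergence-free field reduces to a Bochner-type computation relating $|\Def u|^2$ to $|\nabla u|^2$ and a curvature term, and here the pinching bounds on $K_M$ keep everything integrable. The global energy inequality \eqref{energyM} is then checked directly by multiplying the equation by $u^\ast$, integrating, and using that the harmonic structure forces equality or the correct sign in the dissipation identity $\int_M|u(t)|^2 + 4\int_0^t\int_M |\Def u|^2 \le \int_M|u_0|^2$; choosing the time profile $c(t)$ to be nonincreasing makes the inequality automatic.

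The hard part will be the second step: establishing that the nonlinear term $\nablab_U U^\ast$ for the harmonic field is exact (a pure gradient), because this is what is special about the constant-curvature model and must be shown to persist, in a suitably robust form, under the curvature pinching $-b^2 \le K_M \le -a^2$ rather than exact constant curvature. On the model space one computes this cancellation explicitly, but on a general pinched manifold one does not have closed-form solutions, so I expect the genuine obstacle to be producing the harmonic form and verifying the advection-is-exact property using only comparison-geometry estimates rather than explicit formulas. The role of the condition $\tfrac12 b < a$ is almost certainly to make a competition between the curvature-induced decay and the growth of the advection term come out in the favorable direction; pinning down exactly where that inequality is used is the delicate point of the argument.
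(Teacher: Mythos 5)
Your overall ansatz is the same as the paper's (i.e., \cite{CC10}'s): take $u = c(t)\,\dd F$ with $F$ harmonic, absorb the advection into the pressure, and check \eqref{energydef}--\eqref{energyM}. But you have inverted which steps are hard and which are trivial. The cancellation you single out as ``the hard part'' --- exactness of the advection term --- is a one-line identity valid for \emph{any} smooth function $F$ on \emph{any} Riemannian manifold: by symmetry of the Hessian $\nablab \dd F$, one has $\nablab_{\nabla F}\dd F = \tfrac12\,\dd\abs{\dd F}^{2}$ (this is Lemma 6.1 of \cite{CC10}, quoted in Section \ref{confirm} of the present paper). It needs neither harmonicity nor constant curvature, so there is nothing about it that must ``persist under pinching.'' The genuine inputs, which your proposal treats as soft comparison facts, are: (i) the existence of a nontrivial \emph{bounded} harmonic function on $M$, which is the Anderson--Sullivan theorem on the Dirichlet problem at infinity, a deep result and not a model-comparison argument; and (ii) the gradient decay \eqref{expdecaydF}, $\abs{\nabla F}\leq C e^{-\delta\rho}$ for any $\delta<a$, obtained from the Anderson--Schoen construction combined with Yau's gradient estimate. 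The pinching $\tfrac12 b< a$ is used exactly there, not where you guess: since $K_M\geq -b^{2}$ forces geodesic spheres to grow no faster than $e^{b\rho}$, finiteness of $\int_M\abs{\nabla F}^{2}\lesssim \int_0^\infty e^{-2\delta\rho}e^{b\rho}\,\dd\rho$ requires choosing $\delta$ with $b/2<\delta<a$, which is possible precisely when $2a>b$. It has nothing to do with controlling the advection term, which has already disappeared into $\dd P$.

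There is also a concrete error in your Step 3: a nonincreasing profile $c(t)$ does \emph{not} make \eqref{energyM} automatic. For $u=c(t)\dd F$ the inequality reads $c^{2}(t)\norm{\dd F}_{L^2}^{2} + 4\norm{\Def \dd F}_{L^2}^{2}\int_0^t c^{2}(s)\,\dd s \leq \norm{\dd F}_{L^2}^{2}$, and the Weitzenb\"ock identity for the closed and coclosed form $\dd F$ gives $2\norm{\Def \dd F}_{L^2}^{2} = -2\int_M \Ric (\dd F , \dd F) \geq 2a^{2}\norm{\dd F}_{L^2}^{2}>0$. Taking $c\equiv 1$ (which is nonincreasing) the left-hand side grows linearly in $t$ and the inequality fails; $c(t)=(1+t)^{-1}$ fails as well. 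One needs exponential decay $c(t)=e^{-At/2}$ with $A \geq 4\norm{\Def\dd F}_{L^2}^{2}/\norm{\dd F}_{L^2}^{2}$; on $\H$ this is exactly the threshold $A\geq 4a^{2}$ of Theorem \ref{mainthmns}, and on pinched $M$ it is controlled via $-2\Ric\leq 2b^{2}g$. This threshold is not a technicality --- the family of admissible exponents \emph{is} the non-uniqueness, and the selection of the extreme exponent by energy conservation is the whole point of the present paper --- so the two steps you defer or get wrong, producing the harmonic datum with quantitative decay and identifying the admissible time profiles, are where the entire content of the theorem lies.
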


The non-uniqueness result relies on the existence of nontrivial bounded harmonic functions on negatively curved Riemannian manifolds due to the independent works of Anderson \cite{Anderson} and Sullivan \cite{Sullivan}.   The solution pairs $(u ,p)$ are (for $\NS{\H}$ and similar for \eqref{MNS})
\[
u = \psi(t) \dd F,\quad\mbox{and}\quad
 p = -\partial_{t}\psi(t) F - \frac{1}{2} \psi^{2}(t)|\dd F|^{2} - 2a^{2}\psi(t)F ,
 \]
where  $\psi(t) = \exp(-\frac{At}{2})$ for any $A \geq 4a^{2}$, and $F$ is a nontrival bounded harmonic function on $\mathbb{H}^{2}(-a^{2})$, which arises as a solution to the Dirichlet problem on $\H$ with $C^1$ boundary data prescribed on the sphere at infinity.

Verifying that $(u, p)$ solves $\NS{\H}$ is simple when we use Hodge theory.  In fact, taking solutions of the form $\psi(t)\nabla F$ seems to be a well known convention, and one could set out to try a similar solution on $\R^{n}$.  However, such solutions would not be interesting, because it would not be possible to show that they are even in $L^{2}$  since only bounded harmonic functions on $\R^{n}$ are trivial.  In the hyperbolic setting, given the abundance of the bounded harmonic functions, at least we have a hope, but a priori, it is not obvious that our solutions have to satisfy  \eqref{energydef}-\eqref{energyM}. Hence the main contribution of \cite{CC10} stems from showing \eqref{energydef}-\eqref{energyM}.
\newline\indent  Another proof of the existence of the bounded nontrivial harmonic function \cite{Anderson, Sullivan} can be found in \cite{AndersonSchoen}.   Study of that proof combined with the gradient estimate of S.T. Yau \cite{Yau75} leads to an exponential decay of the gradient of the harmonic function
 \be\label{expdecaydF}
|\nabla F| \leq C(a, \delta, \phi' )e^{-\delta \rho},
\ee
where $\delta<a$ is some constant, $\rho$ is the distance function, and $\phi$ is a $C^1$ boundary data for $F$ at infinity.  Estimate \eqref{expdecaydF} very easily gives property $\eqref{energydef}$, but also is a reason why we could not obtain the result in 3D (for more details see \cite{CC10}).
In \cite{Taylor}, among other things, Taylor showed that on a $2D$ hyperbolic space the decay can be improved to $e^{-a\rho}$ if the data is assumed to be $C^{1+\epsilon}$.

 \subsection{Other dimensions}\label{otherD}
In \cite{KhesinMisiolek}, by means of the result of Dodziuk \cite{Dodziuk}, Khesin and Misio\l ek showed that our construction can only work in 2D.  The main idea is that on $\mathbb H^n(-a^2)$ the only $L^2$ harmonic forms are of degree $k=\frac n2$.  So since we are using $F$ harmonic to obtain $\dd F$ which is a harmonic form of degree $1$,  this construction can only work in $n=2$!  The question of a general negatively curved Riemannian manifold is open as far as we know, but because of \cite{Dodziuk} we do not expect it to be true, especially for simply connected manifolds.

\subsection{Simpler Proofs}\label{simpler}
With hindsight, we could now simplify our previous proofs on $\H$ and partially on a more general manifold.  To see this, we could use the result of \cite{Dodziuk} to know that $\dd F$ belongs to $L^{2}(\H)$ since it is a harmonic 1-form, and $\H$ satisfies properties of the manifolds considered in \cite{Dodziuk}.  More directly, as done in \cite{KhesinMisiolek} and in \cite{Taylor} one can use the conformal equivalence of the Poincar\'e disk and the Euclidean unit disk together with standard elliptic theory to show that $\dd F$ is in $L^2$.  However, to treat more general Riemannian manifolds, we would still need \eqref{expdecaydF}. 
 
 So this leaves showing  \eqref{dissipationdef}.  In \cite{CC10} we reduced it to showing the $L^{1}$ property of $|\nabla |\nabla F|^{2}|$, which was somewhat involved.   It was pointed out to us by M. Struwe that we could eliminate that work by manipulating further one of the already existing formulas and integrating by parts.  This observation has further consequences, and we will present it in a forthcoming article.

\subsection{Classical uniqueness proofs \& why they do not work on $\H$. }

We are aware of two ways that the uniqueness is proved.  One approach (see for example \cite{Taylor3}) first takes the equation and applies the Leray projection to the equation to eliminate the pressure term.  If we apply the Leray projection to the equation when we use our solution pairs, everything projects to $0$.  In the second approach (see \cite{Temam}) the equation is formulated in a way that has already eliminated the pressure term and the uniqueness is shown in the following space
\begin{align*}
{\textbf V}= \mbox{completion of}\  \{ \theta=(\theta_1,\theta_2), \theta_j \in C^{\infty}_{c}(\R^{n}): \dv\theta=0\}\ \ \mbox{with respect to $H^{1}(\R^{2})$ norm}.
\end{align*}
Now compare ${\textbf V}$ to
\begin{align*}
\widetilde{\tV}= \{ u \in H^{1}(\R^{2}) : \dv u=0\}.
\end{align*}
The closure in $\tV$ is taken \emph{after} the divergence condition is imposed, and in $\widetilde \tV$  \emph{before}.  On $\R^{N}$ these two spaces coincide.  It was pointed out by Heywood \cite{Heywood} that this is not true in general, and whether or not these two spaces coincide is equivalent to having uniqueness for both stationary and non-stationary Stokes and Navier-Stokes equations.  For example, it is not true for $$\{ x \in \R^{3}: x_{1}^{2}+x_{2}^{2}< 1 + x^{2}_{3}\}.$$
It is natural to ask if the analogs of these spaces coincide on $\H$? It is a different domain and a different equation. Corollary \ref{heywoodthm} says that the answer is no.  The reason is due precisely to the existence of nontrivial $L^{2}$ harmonic 1-forms.

At the same time, this motivates considering the space $\tV$ as the setting, in which one can strive to prove uniqueness.  Moreover, one can use the space $\tV$ and follow the presentation in \cite{Temam} to establish what it means to solve the equation in the hyperbolic setting and how to obtain existence.  The presentation of \cite{Temam} is done in a general functional analysis framework, so it seems reasonable to expect that it could be extended to include $\H$.  In this article, we set out to do just that. 

 In addition, we formulate the problem not only in $\tV$, but in $$\V=\tV  \oplus \mathbb F,$$ where
\[
\F=\{ \alpha \in L^2(\H) : \alpha=\dd F,\  F \ \mbox{is a harmonic function on} \  \H \}.
\]
Note, by definition the non-unique solutions we considered before belong to $\F$.  Yet, the problem can be formulated in  $\tV  \oplus \mathbb F$ so the resulting solutions are still unique.  We would like to stress that working with space $\F$ demands extra work, so we cannot  just directly quote \cite{Temam}.

\subsection{Summary of function spaces}\label{summary}
 For convenience of the reader we summarize the function spaces employed in the article.  We have
\begin{itemize}
\item $\Lambda_{c}^1(\mathbb{H}^2(-a^2))$: the space of all smooth 1-forms with compact support on $\H$.
\item $\Lambda_{c,\sigma}^1(\mathbb{H}^2(-a^2))=\{\theta \in \Lambda_{c}^1(\mathbb{H}^2(-a^2)): \dd^* \theta = 0\}$.
\item $\textbf{V} = \overline{\Lambda_{c,\sigma}^1(\mathbb{H}^2(-a^2))}^{H^1}$:  the completion of $\Lambda_{c,\sigma}^1(\mathbb{H}^2(-a^2))$ with respect to the $H^1$-norm, which is defined in Definition \ref{H1def}.
\item $\mathbb{F} = \{ \alpha \in L^2(\H) : \alpha=\dd F,\  F \ \mbox{is a harmonic function on} \  \H \}$.
\item $\mathbb{V} = \textbf{V} \oplus \mathbb{F}$.
\item $\textbf{H} = \overline{\Lambda_{c,\sigma}^1(\mathbb{H}^2(-a^2))}^{L^2}$.
\end{itemize}
We note that we are using the same notation $\textbf{V}$ and $\textbf{H}$ as was used in \cite{Temam}, where it denoted the Euclidean counterparts.

\subsection{Main results}
The main result of this article is
\begin{thm}\label{IMPORTANTTHEOREM}
Given any initial data $u_0 \in \textbf{H}\oplus \mathbb{F}$ and any $T \in (0,\infty )$, there exists a unique $u \in C(0,T; \textbf{H}\oplus \mathbb{F} )\cap L^2(0,T; \mathbb{V} )$ which satisfies
\begin{itemize}
\item [a)] $\partial_t u \in L^2(0,T ; \mathbb{V}' )$.
\item [b)]  $u(0,\cdot ) = u_0$ in $\textbf{H}\oplus \F$.
\item [c)]  The following equation holds with the terms to be interpreted as elements in  $L^2(0,T ; \mathbb{V}' )$
\begin{equation}\label{FINALEQUATION}
\partial_t u  + Au + Bu = 0,
\end{equation}
where the terms $Au(t), Bu(t) \in \mathbb{V}'$ are defined, for almost every $t$, as follows
\begin{equation}\label{accuratedefofAB}
\begin{split}
\ip{Au(t) , v}_{\mathbb{V}'\otimes \mathbb{V}} & =2 \int_{\mathbb{H}^2(-a^2)} \overline{g} (\Def u(t) , \Def v) \Vol_{\mathbb{H}^2(-a^2)}
, \\
\ip{Bu(t) , v}_{\mathbb{V}'\otimes \mathbb{V}} & = \int_{\mathbb{H}^2(-a^2)} \overline{g}(\overline{\nabla}_uu(t) , v ) \Vol_{\mathbb{H}^2(-a^2)} .
\end{split}
\end{equation}
\end{itemize}
\end{thm}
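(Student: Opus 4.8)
The plan is to prove existence and uniqueness via the Faedo-Galerkin method, following the functional-analytic framework of \cite{Temam}, but adapted carefully to the direct-sum structure of $\mathbb{V} = \textbf{V} \oplus \mathbb{F}$. The first task is to set up the functional-analytic scaffolding: I would verify the Gelfand triple structure $\mathbb{V} \hookrightarrow \textbf{H} \oplus \mathbb{F} \hookrightarrow \mathbb{V}'$, checking that the embeddings are continuous and dense. Here I must confirm that the bilinear form defining $A$ is coercive on $\mathbb{V}$; this is where the choice to work with $\mathrm{Def}$ (via a Korn-type inequality on $\mathbb{H}^2(-a^2)$) rather than the raw gradient becomes essential, since it is $\abs{\mathrm{Def}\, u}^2$ that controls the $H^1$-norm up to lower-order terms. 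I would then establish that $A \colon \mathbb{V} \to \mathbb{V}'$ is bounded and coercive, so that $\ip{Au, u}_{\mathbb{V}' \otimes \mathbb{V}} \geq c \norm{u}_{\mathbb{V}}^2 - \lambda \norm{u}_{\textbf{H}\oplus \mathbb{F}}^2$ for suitable constants.

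Next I would handle the nonlinear term $B$. The key estimates are the trilinear bounds: I need $\ip{Bu, v}$ to be well-defined and continuous on $\mathbb{V}$, together with the antisymmetry property $\ip{\overline{\nabla}_u u, u} = 0$ (from the divergence-free condition and integration by parts), which kills the nonlinearity in the energy estimate. In 2D one expects the Ladyzhenskaya-type interpolation inequality $\norm{v}_{L^4} \leq C \norm{v}_{L^2}^{1/2} \norm{v}_{H^1}^{1/2}$ to furnish the needed control of the cubic term, and I would verify this holds on $\mathbb{H}^2(-a^2)$ (the constant may differ, but the scaling is dimensional). With these in hand, I would run the Galerkin scheme on finite-dimensional subspaces of $\mathbb{V}$, derive uniform-in-$n$ energy bounds in $L^\infty(0,T;\textbf{H}\oplus\mathbb{F}) \cap L^2(0,T;\mathbb{V})$ from the coercivity and antisymmetry, bound $\partial_t u_n$ in $L^2(0,T;\mathbb{V}')$, and pass to the limit using the Aubin-Lions compactness lemma to recover a solution satisfying (a)--(c); the continuity $u \in C(0,T;\textbf{H}\oplus\mathbb{F})$ and the attainment of the initial data follow from the standard interpolation lemma for the Gelfand triple.

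For uniqueness I would take two solutions $u_1, u_2$ with the same data, subtract, test the difference equation against $w = u_1 - u_2$, and use the antisymmetry of $B$ together with the Ladyzhenskaya inequality and a Gr\"onwall argument to conclude $w \equiv 0$; the 2D nature is precisely what makes this closing estimate work, exactly as it fails to in the non-unique examples that live purely in $\mathbb{F}$.

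\emph{The main obstacle} I anticipate is handling the $\mathbb{F}$ summand throughout, since $\mathbb{F}$ consists of harmonic 1-forms that are closed and co-closed but need not be compactly supported or smoothly approximable within $\textbf{V}$ — indeed the whole point is that $\mathbb{F}$ sits \emph{outside} $\textbf{V}$. I must check that $\mathbb{F}$ is itself closed in $L^2$ (so $\mathbb{V}$ is a genuine direct sum and a Hilbert space), that the forms $A$ and $B$ behave correctly when one or both arguments lie in $\mathbb{F}$ — in particular that the integration-by-parts manipulations yielding coercivity of $A$ and antisymmetry of $B$ remain valid without boundary contributions at infinity, which is delicate because elements of $\mathbb{F}$ are not compactly supported and one must control decay (using the exponential gradient bound \eqref{expdecaydF} or the $L^2$ membership from \cite{Dodziuk}). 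This is precisely the ``extra work'' flagged in the introduction that prevents a direct quotation of \cite{Temam}, and I would allocate the bulk of the effort to justifying these integrations by parts and to constructing a Galerkin basis adapted to the splitting $\mathbb{V} = \textbf{V} \oplus \mathbb{F}$.
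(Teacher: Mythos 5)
Your architecture --- Faedo--Galerkin with a basis adapted to the splitting $\mathbb{V} = \textbf{V}\oplus\mathbb{F}$, energy estimates from the antisymmetry of $b$ and a hyperbolic Ladyzhenskaya inequality, and a Gr\"onwall argument for uniqueness --- is exactly the route the paper takes, and you correctly identify that the summand $\mathbb{F}$ is where the work lies. But the specific tools you name for that work would fail. You propose to control elements of $\mathbb{F}$ ``using the exponential gradient bound \eqref{expdecaydF} or the $L^2$ membership from \cite{Dodziuk}.'' Neither suffices. The decay estimate \eqref{expdecaydF} is available only for harmonic functions obtained by solving the Dirichlet problem with $C^1$ data on the sphere at infinity; a general element $\dd F\in\mathbb{F}$ is merely an $L^2$ harmonic gradient whose boundary behavior is unknown, so \eqref{expdecaydF} cannot be invoked (the paper stresses exactly this point in Section \ref{guide}). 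And $L^2$ membership is the \emph{definition} of $\mathbb{F}$, not a tool: what the entire scheme needs before it can start --- for $\mathbb{F}\subset H^1_0$ (Lemma \ref{Anothertrivial}), for the orthogonality of $\textbf{V}$ and $\mathbb{F}$, for $b(u,v,v)=0$ with $v$ approximated by compactly supported forms, and for $Au, Bu$ to be defined on all of $\mathbb{V}$ --- is the quantitative finite-dissipation bound $\norm{\overline{\nabla}\nabla F}_{L^2}\leq C_a\norm{\dd F}_{L^2}$ for \emph{every} $\dd F\in\mathbb{F}$. This is Lemma \ref{IMFinitedisspation}, and the paper proves it by a genuinely new argument: passing to the Poincar\'e disc and using the elementary complex-analysis estimate of Lemma \ref{OURWAY} (a weighted $L^2$ bound on $f'$ for a holomorphic $L^2$ function $f$ on the disc), which needs no information whatsoever about boundary values. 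This lemma is absent from your plan, and nothing you list can replace it.

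The second gap is your compactness step: ``pass to the limit using the Aubin--Lions compactness lemma.'' On the non-compact manifold $\mathbb{H}^2(-a^2)$ the embedding $H^1\hookrightarrow L^2$ is \emph{not} compact, so Aubin--Lions in the form you intend does not apply. The paper instead derives a uniform fractional-time-derivative bound (Step 4, via Fourier transform in time), applies the compactness theorem (Theorem \ref{compact2}) only after restricting to geodesic balls $B_O(R)$, where $H^1(B_O(R))\hookrightarrow L^2(B_O(R))$ is compact, and hence obtains strong convergence only locally. For test forms $e_k\in\Lambda_{c,\sigma}^1(\mathbb{H}^2(-a^2))$ this local convergence is enough, since their supports sit inside some ball; but for the test forms $\dd F_k\in\mathbb{F}$, which are not compactly supported, the paper must split the nonlinear term into a local part on $B_O(R_\epsilon)$ and a far-range part, and make the far-range part small using $\norm{\overline{\nabla}\dd F_k}_{L^2(\mathbb{H}^2(-a^2)-B_O(R))}\rightarrow 0$ as $R\rightarrow\infty$ --- again a consequence of Lemma \ref{IMFinitedisspation}. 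Without this local/far-range decomposition, your passage to the limit in $b(u_{m'},u_{m'},\dd F_k)$ is unjustified. In short: right skeleton, but the two hard steps are resolved by tools (the complex-analytic dissipation lemma and the localized compactness argument) that your proposal does not contain.
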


\begin{remark}\label{VERYFIRSTRE}
The formulation of Theorem \ref{IMPORTANTTHEOREM} for the sake of brevity implicitly uses the following
\begin{itemize}
\item $\mathbb{F} \subset H^1_0(\mathbb{H}^2(-a^2))$, which is proved through Lemma \ref{IMFinitedisspation} and Lemma \ref{Anothertrivial} ;
\item $Au \in L^2(0,T ; \mathbb{V}')$ holds, for any $u \in L^2(0,T ; \mathbb{V})$. This is justified in Lemma \ref{AONE};
\item $Bu \in L^2(0,T; \mathbb{V}')$ holds, for any $u \in L^2(0,T ; \mathbb{V})$. This is based on estimate \eqref{b2} in Lemma \ref{ATWO}.
\end{itemize}
\end{remark}
We also have
\begin{cor}[Conservation of energy of the Navier-Stokes flow on $\mathbb{H}^2(-a^2)$]\label{ConserLemma}
Given initial data $u_0 \in \textbf{H}\oplus \mathbb{F}$, and $T \in (0, \infty )$, let $u \in C(0,T; \textbf{H}\oplus \mathbb{F} )\cap L^2(0,T; \mathbb{V} )$  be the unique element which satisfies the three conditions as specified in Theorem \ref{IMPORTANTTHEOREM}. Then $u$ satisfies the energy conservation law in the sense that the following holds for all $t \in [0,T]$
\begin{equation}\label{ConservationEnergy}
\|u(t,\cdot )\|_{L^2(\mathbb{H}^2(-a^2))}^2 + 4\int_{0}^{t} \|\Def u(t,\cdot)\|_{L^2(\mathbb{H}^2(-a^2))}^2\dd t = \|u_0\|_{L^2(\mathbb{H}^2(-a^2))}^2.
\end{equation}
\end{cor}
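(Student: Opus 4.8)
The plan is to derive the energy equality by testing the evolution equation \eqref{FINALEQUATION} against the solution $u$ itself and integrating in time, within the framework of the Gelfand triple $\mathbb{V}\hookrightarrow \textbf{H}\oplus\mathbb{F}\hookrightarrow \mathbb{V}'$, where the pivot space $\textbf{H}\oplus\mathbb{F}$ carries the $L^2(\H)$ inner product. First I would invoke the standard Lions--Magenes lemma (as in \cite{Temam}): since $u\in L^2(0,T;\mathbb{V})$ and $\partial_t u\in L^2(0,T;\mathbb{V}')$, the scalar function $t\mapsto \|u(t,\cdot)\|_{L^2(\H)}^2$ is absolutely continuous on $[0,T]$ and satisfies, for almost every $t$,
\[
\frac{\dd}{\dd t}\|u(t,\cdot)\|_{L^2(\H)}^2 = 2\,\ip{\partial_t u(t),u(t)}_{\mathbb{V}'\otimes\mathbb{V}}.
\]
This is precisely where the continuity statement $u\in C(0,T;\textbf{H}\oplus\mathbb{F})$ from Theorem \ref{IMPORTANTTHEOREM} is used to render the pointwise evaluation of the energy meaningful.

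Next I would pair \eqref{FINALEQUATION} with $u(t)\in\mathbb{V}$ for almost every $t$ --- legitimate since all three terms lie in $\mathbb{V}'$ --- to obtain
\[
\ip{\partial_t u(t),u(t)}_{\mathbb{V}'\otimes\mathbb{V}} + \ip{Au(t),u(t)}_{\mathbb{V}'\otimes\mathbb{V}} + \ip{Bu(t),u(t)}_{\mathbb{V}'\otimes\mathbb{V}} = 0.
\]
The linear term is read off directly from \eqref{accuratedefofAB}, namely $\ip{Au(t),u(t)}_{\mathbb{V}'\otimes\mathbb{V}} = 2\|\Def u(t,\cdot)\|_{L^2(\H)}^2$.

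The crux is to show that the trilinear term vanishes, $\ip{Bu(t),u(t)}_{\mathbb{V}'\otimes\mathbb{V}}=0$. For a smooth compactly supported $w$ with $\dd^* w = 0$, metric compatibility of the Levi--Civita connection gives $\overline{g}(\overline{\nabla}_w w, w) = \tfrac12\, w(|w|^2) = \tfrac12\,\dv(|w|^2 w)$, the last equality using $\dv w = 0$; integrating over $\H$ and applying the divergence theorem then yields zero. I would extend this identity to every $u\in\mathbb{V}$ by density, together with the continuity estimate for $B$ recorded in Lemma \ref{ATWO} (estimate \eqref{b2}). The delicate part, and what I expect to be the main obstacle, is that $\mathbb{V}=\textbf{V}\oplus\mathbb{F}$ contains the harmonic-form summand $\mathbb{F}$, whose elements are not compactly supported; one must verify that the approximating sequences and the control of boundary terms are compatible with the $\mathbb{F}$ component, using $\mathbb{F}\subset H^1_0(\H)$ from Remark \ref{VERYFIRSTRE} and the co-closedness $\dd^*\dd F=0$ of harmonic forms, so that indeed every element of $\mathbb{V}$ is divergence free.

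Finally, combining these three observations gives, for almost every $t$,
\[
\frac{\dd}{\dd t}\|u(t,\cdot)\|_{L^2(\H)}^2 + 4\|\Def u(t,\cdot)\|_{L^2(\H)}^2 = 0,
\]
and integrating from $0$ to $t$ while using $u(0)=u_0$ from part (b) of Theorem \ref{IMPORTANTTHEOREM} yields \eqref{ConservationEnergy} for all $t\in[0,T]$. I would emphasize that, unlike the energy \emph{inequality} \eqref{energyM}, the equality here hinges on the exact differentiation identity of the Lions--Magenes lemma and the exact vanishing of the trilinear term, rather than on any lower semicontinuity of the norm.
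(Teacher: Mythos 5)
Your skeleton is the paper's own proof: pair \eqref{FINALEQUATION} with $u(t)$, use Lemma \ref{anotherlemma} (legitimate because $u \in L^2(0,T;\mathbb{V})$ and, by \eqref{betterregularity}, $\partial_t u \in L^2(0,T;\mathbb{V}')$) to get $\ip{\partial_t u(t),u(t)}_{\mathbb{V}'\otimes\mathbb{V}} = \tfrac12\tfrac{\dd}{\dd t}\|u(t)\|_{L^2}^2$, identify $\ip{Au(t),u(t)}_{\mathbb{V}'\otimes\mathbb{V}} = ((u,u))(t) = 2\|\Def u(t)\|_{L^2}^2$, annihilate the nonlinear term, and integrate in time using $u(0)=u_0$. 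Up to that point you and the paper agree essentially word for word.

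The gap is in your justification of $\ip{Bu(t),u(t)}_{\mathbb{V}'\otimes\mathbb{V}}=0$. You prove $b(w,w,w)=0$ for $w \in \Lambda^1_{c,\sigma}(\H)$ and then propose to ``extend to every $u\in\mathbb{V}$ by density.'' But density of $\Lambda^1_{c,\sigma}(\H)$ in $\mathbb{V}$ with respect to the $H^1$-norm is exactly what fails on $\H$: by definition $\overline{\Lambda^1_{c,\sigma}(\H)}^{H^1} = \textbf{V}$, and $\mathbb{F}$ is $[\cdot,\cdot]$-orthogonal to $\textbf{V}$ (Lemma \ref{orthogonal}); this non-density is the content of Corollary \ref{heywoodthm}, and it is the very reason the paper formulates the problem in $\mathbb{V}=\textbf{V}\oplus\mathbb{F}$ rather than in $\textbf{V}$. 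So for an element with nontrivial $\mathbb{F}$-component there is no approximating sequence of compactly supported divergence-free forms converging in $H^1$, and no ``control of boundary terms'' can produce one; the obstacle you flag as delicate is in fact fatal to that route. The repair --- which is the paper's Lemma \ref{ATWO}, identity \eqref{bp2}, and is what the paper's proof of the corollary simply cites --- is to fix $u\in\mathbb{V}$ and use only that $\dd^* u = 0$ holds weakly (true on $\textbf{V}$ by density and on $\mathbb{F}$ because $\dd^*\dd F = -\triangle F = 0$), then run the approximation in the \emph{other} slots: by \eqref{ip0}, $b(u,v,v) = \int_{\H}\bigl(\tfrac12|v|^2\,\dd^*u - \tfrac12\,\dd^*(|v|^2u)\bigr)\Vol_{\H}$, where the first term vanishes since $\dd^*u=0$, and the second vanishes after approximating $v\in H^1_0(\H)$ by elements of $\Lambda^1_c(\H)$ (which need not be divergence free), with convergence controlled by \eqref{b1} and Ladyzhenskaya's inequality. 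With \eqref{bp2} cited in place of your density step, your argument coincides with the paper's proof.
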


We also show  the survival of \emph{one} solution from the family of the non-unique solutions we considered before.
\begin{cor}[Survival of one solution]\label{survivalcor}
Given $u_0=\dd F \in \F$, $e^{-2a^2t}\dd F$ is the unique solution to the variational problem \eqref{FINALEQUATION} which satisfies all the properties $a)-c)$ in Theorem \ref{IMPORTANTTHEOREM}.  This solution also satisfies \eqref{ConservationEnergy}.
\end{cor}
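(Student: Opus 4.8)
The plan is to prove Corollary \ref{survivalcor} by appealing directly to the uniqueness half of Theorem \ref{IMPORTANTTHEOREM} together with a verification that the explicit candidate $u(t) = e^{-2a^2 t}\,\dd F$ satisfies all of a)--c). Since $u_0 = \dd F \in \F \subset \textbf{H}\oplus\F$ and, by Remark \ref{VERYFIRSTRE}, $\F \subset H^1_0(\H)$, the candidate lies in the correct regularity class: $e^{-2a^2 t}\dd F \in C(0,T;\textbf{H}\oplus\F)\cap L^2(0,T;\V)$, and its time derivative $\partial_t u = -2a^2 e^{-2a^2 t}\dd F$ clearly sits in $L^2(0,T;\V')$, so a) holds. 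Condition b) is immediate from evaluating at $t=0$. The entire content of the proof therefore reduces to verifying the variational identity c), and then invoking uniqueness to conclude that this candidate is \emph{the} solution.

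For condition c), the key point is to show that $\langle Au(t),v\rangle + \langle Bu(t),v\rangle = 2a^2 e^{-2a^2 t}\langle \dd F, v\rangle$ for all test elements $v\in\V$, since this exactly cancels $\partial_t u = -2a^2 e^{-2a^2 t}\dd F$ in \eqref{FINALEQUATION}. First I would treat the bilinear term $Bu$. Because $F$ is harmonic, $\dd F$ is a harmonic $1$-form, and one expects $\overline{\nabla}_u u = \frac12 \dd(|u|^2)$ to be an exact form. Writing $\overline{\nabla}_{\dd F}\dd F = \frac12\dd|\dd F|^2$ and using that $v\in\V$ is divergence-free (so $\dd^* v = 0$), the pairing $\int_{\H}\overline{g}(\tfrac12\dd|\dd F|^2, v)\,\Vol$ integrates by parts against $\dd^* v = 0$ and vanishes. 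One must justify that the boundary terms vanish, using $\dd F\in H^1_0$ and the $L^2$ decay of $|\dd F|^2$; this is where the $H^1_0$ membership from Remark \ref{VERYFIRSTRE} does the real work. Then for the $Au$ term I would compute $2\int_{\H}\overline{g}(\Def\,\dd F,\Def\, v)\,\Vol$. Using the Weitzenböck-type identity relating $2\Def^*\Def$ to the Hodge Laplacian plus the Ricci term — namely $L = -\Delta + \dots - 2\Ric$ from the expression for $L$ in the introduction — together with the fact that $\dd F$ is harmonic (so the Hodge Laplacian annihilates it) and that on $\H$ one has $\Ric = -a^2\,\mathrm{Id}$ on $1$-forms, the operator $2\Def^*\Def$ applied to $\dd F$ should reduce to $2a^2\,\dd F$. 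After integrating by parts in the $\Def$ pairing, this yields $\langle Au(t),v\rangle = 2a^2 e^{-2a^2 t}\int_{\H}\overline{g}(\dd F, v)\,\Vol$.

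Combining the two computations gives $Au(t) + Bu(t) = 2a^2 e^{-2a^2 t}\dd F$ as an element of $\V'$, which precisely balances $\partial_t u$, so \eqref{FINALEQUATION} holds and c) is verified. Having checked a)--c), the uniqueness statement of Theorem \ref{IMPORTANTTHEOREM} forces $e^{-2a^2 t}\dd F$ to coincide with the unique solution. Finally, the energy conservation \eqref{ConservationEnergy} follows either as a direct application of Corollary \ref{ConserLemma} or by a one-line explicit check: $\|u(t)\|_{L^2}^2 = e^{-4a^2 t}\|\dd F\|_{L^2}^2$, while $\Def\, u = e^{-2a^2 t}\Def\,\dd F$ and the Weitzenböck computation gives $4\|\Def\,\dd F\|_{L^2}^2 = 2a^2\|\dd F\|_{L^2}^2$ (again integrating $\langle 2\Def^*\Def\,\dd F,\dd F\rangle$ by parts), so that $\frac{d}{dt}\|u(t)\|_{L^2}^2 = -4a^2 e^{-4a^2 t}\|\dd F\|_{L^2}^2 = -4\|\Def\, u(t)\|_{L^2}^2$, which integrates to \eqref{ConservationEnergy}. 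The main obstacle I anticipate is the careful justification of the integration-by-parts steps — ensuring no boundary contributions arise from the sphere at infinity — which is exactly what the $\F\subset H^1_0(\H)$ property and the exponential gradient decay \eqref{expdecaydF} are designed to control.
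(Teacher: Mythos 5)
Your proposal is correct and takes essentially the same route as the paper: the paper likewise verifies the candidate (in fact it keeps the whole family $e^{-At/2}\dd F$ and finds $A=4a^2$ forced) by reducing the Stokes pairing to $2a^2(u,v)$ via $2\Def^*\Def\,\dd F = 2a^2\,\dd F$ (harmonicity of $\dd F$ plus $\Ric=-a^2$), killing $b(u,u,v)$ through $\overline{\nabla}_{\nabla F}\,\dd F = \tfrac12\,\dd|\dd F|^2$ with the integration by parts justified by Cauchy--Schwarz, Ladyzhenskaya and Lemma \ref{IMFinitedisspation}, and then invoking the uniqueness of Theorem \ref{IMPORTANTTHEOREM} and Corollary \ref{ConserLemma} for energy conservation. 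The only blemish is a harmless factor-of-two slip in your optional direct energy check: by Lemma \ref{SILEMMA} one has $2\norm{\Def\,\dd F}_{L^2}^2 = 2a^2\norm{\dd F}_{L^2}^2$, hence $4\norm{\Def\,\dd F}_{L^2}^2 = 4a^2\norm{\dd F}_{L^2}^2$ (not $2a^2\norm{\dd F}_{L^2}^2$), which is exactly what makes $\frac{\dd}{\dd t}\norm{u(t)}_{L^2}^2 = -4\norm{\Def u(t)}_{L^2}^2$ come out consistently.
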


We address the pressure in the following proposition.
\begin{prop}[Associated pressure] \label{pressuremainlemma}
Let $u\in C([0,T]; \textbf{H}\oplus \F)\cap L^2(0,T; \V)$ be the Leray-Hopf solution obtained in Theorem \ref{IMPORTANTTHEOREM}. Then there exists a family of functions $\textbf{P}(t) \in L^2_{loc}(\H)$ with $t\in [0,T]$ such that the following holds in the sense of $H^{-1}(\H)$
\[
u(t) - u_0 + A U(t) + \textbf{B}(t) = \dd \textbf{P}(t),\quad \forall t \in [0,T],
\]
where
\[
U(t)=\int^t_0 u(s) \dd s\quad\mbox{and}\quad \textbf{B}(t)=\int^t_0 Bu(\tau)\dd \tau.
\]

Moreover, there exists a distribution $p\in \mathcal D'(\H\times(0,T))$ such that
\be\label{eqwithp}
\partial_{t}u +\Delta u  + \overline{\nabla}_{u}u - 2\Ric(u)+ \dd p = 0\ee
holds in the sense of distributions.
\end{prop}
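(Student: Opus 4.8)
The plan is to recover the pressure by a De Rham--type argument. The time-integrated momentum balance defines, for each fixed $t$, a distributional $1$-form that annihilates every compactly supported divergence-free test form; since $\H$ is contractible, any such functional must be exact, and its potential is the primitive $\textbf{P}(t)$.

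First I would integrate \eqref{FINALEQUATION} in time. Because $\partial_t u + Au + Bu = 0$ holds in $L^2(0,T;\V')$, integrating from $0$ to $t$ and using $u(0)=u_0$ together with the linearity of $A$ yields, as an identity in $\V'$,
\[
u(t) - u_0 + AU(t) + \textbf{B}(t) = 0,\qquad U(t)=\int_0^t u(s)\,\dd s,\quad \textbf{B}(t)=\int_0^t Bu(\tau)\,\dd\tau.
\]
The next point is to verify that each of the three terms extends to a genuine element of $H^{-1}(\H)$, not merely of $\V'$. Indeed $u(t)\in L^2\hookrightarrow H^{-1}$; since $U(t)\in\V\subset H^1$, the form $2\Def^\ast\Def U(t)$ representing $AU(t)$ via \eqref{accuratedefofAB} lies in $H^{-1}$; and $\textbf{B}(t)\in H^{-1}$ because a two-dimensional Ladyzhenskaya estimate puts $\overline{\nabla}_u u$ in $L^1_tL^{4/3}_x\hookrightarrow L^1_tH^{-1}_x$, which is the content of \eqref{b2} in Lemma \ref{ATWO}. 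Denote the resulting $H^{-1}$ functional by $G(t)$.

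The heart of the argument, and the step I expect to be the main obstacle, is the pressure lemma: if $G(t)\in H^{-1}(\H)$ satisfies $\langle G(t),\theta\rangle = 0$ for every $\theta\in\Lambda^1_{c,\sigma}(\H)$, then $G(t)=\dd\textbf{P}(t)$ for some $\textbf{P}(t)\in L^2_{loc}(\H)$. The hypothesis holds because the displayed identity vanishes against all of $\V$ and $\Lambda^1_{c,\sigma}(\H)\subset\textbf{V}\subset\V$. I would prove the lemma by localization: on each geodesic ball, which is diffeomorphic to a Euclidean ball, the classical De Rham--Ne\v{c}as lemma produces a local potential with $\|\textbf{P}-\overline{\textbf{P}}\|_{L^2}\lesssim\|G(t)\|_{H^{-1}}$; since $\H$ is contractible, these local potentials, which agree up to additive constants on overlaps, glue into a single $\textbf{P}(t)\in L^2_{loc}(\H)$. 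I want to stress that enlarging the test space by $\F$ creates no obstruction: every exact form $\dd\textbf{P}$ is automatically $L^2$-orthogonal to the co-closed harmonic forms constituting $\F$, so requiring $G(t)$ to annihilate $\F$ as well is consistent with $G(t)$ being a gradient. Fixing a normalization (e.g.\ zero mean on a fixed ball) measurably in $t$, with the uniform Ne\v{c}as bound above, delivers $\textbf{P}(\cdot)$ as a measurable family and proves the first assertion.

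Finally, for the space-time equation I would differentiate the identity $u(t)-u_0+AU(t)+\textbf{B}(t)=\dd\textbf{P}(t)$ in $t$ in the sense of $\mathcal D'(\H\times(0,T))$, which is legitimate precisely because $t\mapsto\textbf{P}(t)$ is measurable into $L^2_{loc}$. This returns $\partial_t u + Au + Bu = \dd(\partial_t\textbf{P})$; setting $p=-\partial_t\textbf{P}\in\mathcal D'(\H\times(0,T))$ and rewriting $Au$ through $2\Def^\ast\Def u=-\lap u-2\Ric(u)$ on divergence-free forms and $Bu$ through $\overline{\nabla}_u u$ reproduces \eqref{eqwithp} once the sign convention for $\lap$ is fixed. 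The only genuine care is this measurable-in-$t$ selection of the potential, for which the uniform local Ne\v{c}as bound is exactly what is needed.
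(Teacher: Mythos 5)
Your proposal reproduces the paper's overall skeleton — integrate \eqref{FINALEQUATION} in time to get $u(t)-u_0+AU(t)+\textbf{B}(t)=0$ in $\V'$, upgrade each term to $H^{-1}(\H)$ (note the $H^{-1}$ bound on $Bu$ is \eqref{b4}, not \eqref{b2}, which is the $\V'$ bound), recover a potential from the vanishing on $\Lambda^1_{c,\sigma}(\H)\subset\V$, and finally differentiate in time — but it proves the key recovery lemma by a genuinely different route. The paper works globally: it translates the hypothesis into the language of currents and invokes de Rham's theorem (Lemma \ref{p2}) to obtain $\dd\textbf{P}(t)$ as a current, then proves $\textbf{P}(t)\in L^2_{loc}$ (Lemma \ref{fortheSakeofpressure}) by Riesz representation in $H^1_0$, the Hodge--Kodaira decomposition, Weyl's lemma, and exactness of closed smooth $1$-forms on the simply connected $\H$, identifying $\textbf{P}$ explicitly as a sum of $L^2$, $L^2_{loc}$ and smooth pieces. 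You instead localize to geodesic balls, apply the Euclidean De Rham--Ne\v{c}as lemma there, and glue by contractibility. This works, with one transfer detail you must make explicit: a diffeomorphism alone does not preserve divergence-freeness, so you should pass through the Poincar\'e disk model and use that in dimension two the Hodge star on $1$-forms (hence co-closedness) is conformally invariant; then hyperbolic $\dd^*$-closed test forms correspond exactly to Euclidean divergence-free fields and the Ne\v{c}as estimate applies on compact sets where the metrics are comparable. What your route buys is elementarity (no Hodge theory or Weyl's lemma) and, more substantively, the quantitative bound $\|\textbf{P}(t)-\overline{\textbf{P}(t)}\|_{L^2(B)}\lesssim\|G(t)\|_{H^{-1}}$ together with a linear normalization, which makes $t\mapsto\textbf{P}(t)$ measurable (indeed continuous) into $L^2_{loc}$ and thereby justifies differentiating the identity in $\mathcal D'(\H\times(0,T))$ — a point the paper passes over silently when it sets $p=\partial_t\textbf{P}$. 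Two corrections: your side remark that ``every exact form $\dd\textbf{P}$ is automatically $L^2$-orthogonal to $\F$'' is false — elements of $\F$ are themselves exact and are not orthogonal to themselves; the Hodge--Kodaira orthogonality is only between $\F$ and $\overline{\dd\Lambda^0_c}^{L^2}$. Fortunately this remark is not load-bearing: vanishing on $\Lambda^1_{c,\sigma}(\H)$ alone gives the gradient structure, and the vanishing on $\F$ is just additional information carried by the equation, requiring no consistency check. Finally, the discrepancy $p=-\partial_t\textbf{P}$ versus the paper's $p=\partial_t\textbf{P}$ is immaterial, since the proposition asserts only the existence of some distribution $p$ and the sign is absorbed by the (paper's own loose) convention for $\lap$.
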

\begin{remark}
We do not have a deep investigation about the regularity property or the far range behavior of the pressure. The reason is that the concept of the pressure does not get involved in the conservation of energy, and hence it is of a secondary nature with respect to the main guiding principle of this article (see next subsection). 
\end{remark}
The following theorem shows that the space $\V$ is the natural choice for the weak formulation.
 \begin{thm}[$\V=\widetilde{\tV}$]\label{everything}   Let  $\V$ be as defined in Section \ref{summary}, and let \text{\textnormal{$\widetilde{\textbf V}$}} be given by
\[
\mbox{ \text{\textnormal{$\widetilde{\textbf V}$}}}=\{ u \in H^1_0(\H): \dd^\ast  u=0\}.
\]
Then $\V=$ \text{\textnormal{$\widetilde{\textbf V}$}}.
\end{thm}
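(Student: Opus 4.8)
The plan is to prove the two inclusions $\V\subseteq\widetilde{\textbf V}$ and $\widetilde{\textbf V}\subseteq\V$ separately. The inclusion $\V\subseteq\widetilde{\textbf V}$ is the easy one. If $u\in\textbf V$, then $u$ is an $H^1$-limit of forms $\theta_n\in\Lambda^1_{c,\sigma}(\H)$, so $u\in H^1_0(\H)$ and, since $\dd^{\ast}$ is continuous from $H^1$ into $L^2$, $\dd^{\ast}u=\lim_n\dd^{\ast}\theta_n=0$. If instead $u=\dd F\in\F$, then $u\in H^1_0(\H)$ by Remark \ref{VERYFIRSTRE}, and $\dd^{\ast}u=0$ since on functions $\dd^{\ast}\dd F=-\Delta F$ and $F$ is harmonic. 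As $\widetilde{\textbf V}$ is a linear space, this gives $\V=\textbf V\oplus\F\subseteq\widetilde{\textbf V}$.

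For the reverse inclusion I would peel off the harmonic part in $L^2$ and show that the remainder lies in $\textbf V$. Fix $u\in\widetilde{\textbf V}$, so $u\in H^1_0(\H)$ and $\dd^{\ast}u=0$. Recall the $L^2$ Hodge--Kodaira decomposition on the complete manifold $\H$,
\[
L^2\Lambda^1=\overline{\dd\Lambda^0_c}\oplus\overline{\dd^{\ast}\Lambda^2_c}\oplus\mathcal H^1,
\]
with mutually $L^2$-orthogonal summands and $\mathcal H^1$ the space of $L^2$ harmonic $1$-forms; since $\H$ is simply connected, every closed $L^2$ $1$-form is exact, so $\mathcal H^1=\F$. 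Let $w=P_\F u$ be the $L^2$-projection of $u$ onto $\F$; by Remark \ref{VERYFIRSTRE} we have $w\in H^1_0(\H)$ and $\dd^{\ast}w=0$. Set $v=u-w$. Then $v\in H^1_0(\H)$, $\dd^{\ast}v=0$, and $v\perp_{L^2}\F$; moreover $\ip{v,\dd g}_{L^2}=\ip{\dd^{\ast}v,g}_{L^2}=0$ for $g\in\Lambda^0_c$, so $v\perp_{L^2}\overline{\dd\Lambda^0_c}$ and hence $v$ lies in the middle Kodaira summand $\overline{\dd^{\ast}\Lambda^2_c}=\textbf H$. Thus the whole theorem reduces to the following approximation statement, which I regard as its heart: if $v\in \textbf H\cap H^1_0(\H)$ (equivalently $v\in H^1_0$, $\dd^{\ast}v=0$, $v\perp_{L^2}\F$), then $v\in\textbf V$.

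To prove this Key Lemma I would argue by approximation and divergence correction. Choose $\phi_k\in\Lambda^1_c(\H)$ with $\phi_k\to v$ in $H^1$; then $f_k:=\dd^{\ast}\phi_k\to 0$ in $L^2$, with $f_k\in C^\infty_c$. The obstruction is that $\phi_k$ is not coclosed, and the natural correction is not compactly supported. Here the geometry of $\H$ enters: by McKean's inequality the bottom of the $L^2$-spectrum of the scalar Laplacian on $\H$ is $a^2/4>0$, so $\dd^{\ast}\dd$ is boundedly invertible on functions, and $g_k:=(\dd^{\ast}\dd)^{-1}f_k$ obeys $\norm{g_k}_{H^2}\lesssim\norm{f_k}_{L^2}\to 0$, whence $\dd g_k\to 0$ in $H^1$. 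The coclosed form $\phi_k-\dd g_k$ then converges to $v$ in $H^1$ but fails to be compactly supported. I would truncate $g_k$ by a cutoff $\chi_R$ at hyperbolic radius $R$: the form $\dd(\chi_R g_k)$ is compactly supported and $H^1$-close to $\dd g_k$, while $\dd^{\ast}\!\big(\phi_k-\dd(\chi_R g_k)\big)$ is supported in a far annulus and is controlled by the $H^1$-mass of $g_k$ there, which decays exponentially in $R$ because $g_k$ inherits the exponential decay of the Green's function on $\H$. Removing this small, far-field, compactly supported divergence by a compactly supported solution $\eta$ of $\dd^{\ast}\eta=\dd^{\ast}(\phi_k-\dd(\chi_R g_k))$ yields $\theta_k:=\phi_k-\dd(\chi_R g_k)-\eta\in\Lambda^1_{c,\sigma}(\H)$ with $\theta_k\to v$ in $H^1$.

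The main obstacle is exactly this last correction: solving the divergence equation with compact support while keeping the corrector small in $H^1$. The difficulty is a quantitative competition between the norm of the compactly supported divergence solver on an annulus at radius $R$, which degrades as $R\to\infty$ because of the exponential volume growth of $\H$, and the exponential decay in $R$ of the divergence error; one must make a diagonal choice $R=R(k)$ forcing the corrector norm to $0$, and I expect the real work to lie here. As a consistency check and an alternative bookkeeping device, the argument can be recast through the $H^1$ inner product: by the Weitzenb\"ock formula on $\H$, where $\Ric=-a^2g$, one has
\[
\norm{\nabla\alpha}_{L^2}^2=\norm{\dd\alpha}_{L^2}^2+\norm{\dd^{\ast}\alpha}_{L^2}^2+a^2\norm{\alpha}_{L^2}^2,\qquad \alpha\in\Lambda^1_c(\H),
\]
so the covariant and Hodge $H^1$ norms are equivalent and define the same completion $\textbf V$. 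With the Hodge inner product, for $w=\dd F\in\F$ and $\theta\in\Lambda^1_{c,\sigma}(\H)$ one computes $\ip{w,\theta}_{H^1}=\ip{\dd F,\theta}_{L^2}=\ip{F,\dd^{\ast}\theta}_{L^2}=0$, so $\F\perp_{H^1}\textbf V$ and $\V=\textbf V\oplus\F$ is an $H^1$-orthogonal, hence closed, decomposition; the theorem is then equivalent to $\textbf V^{\perp}\cap\widetilde{\textbf V}\subseteq\F$, which unwinds once more to the Key Lemma above.
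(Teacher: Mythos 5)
Your first inclusion and your reduction are correct, and in fact they coincide with the paper's own opening moves: the paper also invokes the Hodge--Kodaira decomposition, writes $\alpha = \dd\alpha_1 + \dd^\ast\alpha_2 + \dd F$, uses $\dd^\ast\alpha = 0$ to kill the $\dd\alpha_1$ component, and reduces the whole theorem to showing that the middle-summand component (your $v$, the paper's $\dd^\ast\alpha_2$) lies in $\tV$. The problem is your proof of this Key Lemma, and the gap sits exactly where you flag it. After truncating $g_k$ you must produce a \emph{compactly supported} $\eta$ with $\dd^\ast\eta = \dd^\ast\big(\phi_k - \dd(\chi_R g_k)\big)$ and $\norm{\eta}_{H^1}\leq C(R)\norm{\dd^\ast(\phi_k-\dd(\chi_R g_k))}_{L^2}$, and then win the race between $C(R)$ and the $e^{-aR}$ decay of the error. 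You give no construction of such a divergence solver and no bound on $C(R)$. Bogovskii-type constants degrade with the geometry of the domain, and the annuli $B_O(2R)\setminus B_O(R)$ in $\H$ have volume and inner circumference growing like $e^{aR}$, so it is not at all clear that $C(R)e^{-aR}\to 0$; nothing in your sketch forces it. Since this step is the entire content of the theorem beyond soft functional analysis (everything before it is bookkeeping with known lemmas of the paper), the proposal as written does not prove the statement. (A minor additional point: your identification of the middle summand with $\textbf{H}$ is asserted, not proved --- only the inclusion $\overline{\dd^\ast\Lambda^2_c}^{L^2}\subseteq \textbf{H}$ is immediate --- but this is inessential since your Key Lemma only uses $v\in H^1_0$, $\dd^\ast v=0$, $v\perp_{L^2}\F$.)

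The paper closes this gap with a two-dimensional trick that makes the divergence constraint cost nothing, and it is the observation you are missing. Since $\alpha_2$ is a $2$-form on a surface, write $\alpha_2 = f\,\Vol_{\H}$; then the middle-summand component is $\dd^\ast\alpha_2 = -\ast\dd f$, a rotated gradient of a stream function $f$. Any $1$-form of the shape $\dd^\ast(\eta\,\Vol_{\H}) = -\ast\dd\eta$ with $\eta$ a function is \emph{automatically} coclosed, because $\dd^\ast\dd^\ast = 0$. So the paper first cuts off ($\psi_R f$, using that $\dd^\ast\alpha_2\in H^1_0$ to make the far-range error small), then approximates the truncated stream function by compactly supported smooth functions $\eta_k$ in the second-order norm $\norm{\cdot}_{L^2}+\norm{\dd\cdot}_{L^2}+\norm{\overline{\nabla}\dd\cdot}_{L^2}$; the forms $-\ast\dd\eta_k$ are then smooth, compactly supported, divergence-free, and converge in $H^1$ to $\dd^\ast\alpha_2$ --- there is no divergence error to correct, hence no annulus problem and no race of constants. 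If you want to salvage your argument, this is the fix: approximate the potential (the stream function), not the form itself; in dimension two the constraint $\dd^\ast=0$ then comes for free.
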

\begin{remark}
In light of Theorem \ref{everything}, one can, of course, switch the notation $\mathbb{V} = \textbf{V} \oplus \mathbb{F}$ to $\widetilde{\tV}$. However, we still employ the notation $\mathbb{V} = \textbf{V} \oplus \mathbb{F}$ throughout the article, since in this way, the nontrivial functional subclass $\mathbb{F}$ in $\mathbb{V} =  \widetilde{\tV}$ is the most apparent to the readers.
\end{remark}
Then immediately it follows
\begin{cor}[Heywood's Theorem on $\H$]\label{heywoodthm}  Let   \text{\textnormal{${\textbf V}$}} be as defined in Section \ref{summary}. Then  \text{\textnormal{${\textbf V}$}} is strictly contained in  \text{\textnormal{$\widetilde{\textbf V}$}}.
\end{cor}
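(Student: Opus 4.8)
The plan is to read off the strict inclusion directly from Theorem \ref{everything} together with the nontriviality of $\F$. First I would record the easy half: by the very definition $\V = \tV \oplus \F$, the subspace $\tV$ sits inside $\V$, and Theorem \ref{everything} identifies $\V$ with $\widetilde{\tV}$; hence $\tV \subseteq \widetilde{\tV}$ with no further argument.

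All the content of the corollary is therefore in the word \emph{strictly}, which reduces to two facts: that the sum $\tV \oplus \F$ is direct, so that $\tV \cap \F = \{0\}$, and that $\F \neq \{0\}$. Granting these, any nonzero element of $\F$ is an element of $\widetilde{\tV} = \tV \oplus \F$ that does not lie in $\tV$, which is exactly the required strict containment. So the whole matter collapses to exhibiting a single nonzero element of $\F$.

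For that I would invoke the existence of a nontrivial bounded harmonic function $F$ on $\H$, guaranteed by the results of Anderson \cite{Anderson} and Sullivan \cite{Sullivan} recalled in the introduction, together with the fact — also noted there, via \cite{Dodziuk} or the conformal equivalence with the Euclidean disk — that its differential $\dd F$ is a nonzero $L^2$ harmonic $1$-form. Then $\dd F \in \F \setminus \{0\}$ furnishes the desired witness, and the strict inclusion $\tV \subsetneq \widetilde{\tV}$ follows. The deduction itself is immediate once Theorem \ref{everything} is in hand; the only real substance, the nonvanishing of $\F$, is not proved here but is quoted from the deep existence theory for bounded harmonic functions on negatively curved manifolds.
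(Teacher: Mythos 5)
Your proposal is correct and follows essentially the same route as the paper: the paper deduces the corollary "immediately" from Theorem \ref{everything}, the decomposition $\V = \tV \oplus \F$, and the nontriviality of $\F$ (which, as noted in the introduction, rests on the existence of nontrivial bounded harmonic functions with $\dd F \in L^2$ due to Anderson and Sullivan). Your spelling out of the two ingredients — directness of the sum (guaranteed by the orthogonality in Lemma \ref{orthogonal}) and $\F \neq \{0\}$ — is exactly the implicit content of the paper's one-line deduction.
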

 
\subsection{Guiding principle for restoring uniqueness}\label{guide}

We now make explicit the guiding principle behind our present work. Recall that we have to restore the uniqueness property of finite energy solutions to the Navier-Stokes equation on $\mathbb{H}^2(-a^2)$ through ruling out all (except possibly one) in the following family of solutions
\begin{equation}\label{infinitemany}
\exp (\frac{-At}{2}) \dd F ,
\end{equation}
with $A \geq 4a^2$, and $\dd F \in \mathbb{F}$. In dealing with this issue we have two different options available:
\begin{itemize}
\item [\textbf{option I:}] Restore uniqueness by excluding all of $\mathbb{F}$ from the class of admissible finite energy initial data for the Cauchy problem of the Navier-Stokes equation.
\item [\textbf{option II}:] Accept all elements in $\mathbb{F}$ into the class of admissible finite energy initial data, yet select the single, most physically meaningful solution among the infinite family of solutions in \eqref{infinitemany}, which all arise from the same initial data $\dd F \in \mathbb{F}$.
\end{itemize}
Choosing one among the above two options is a task of making a philosophical judgement.   Our choice is guided by the following:
\begin{itemize}
\item \textbf{Lack of empirical data:} To our knowledge the experiments about the behavior of viscous incompressible fluid flows are done mostly in the Euclidean setting. Due to the lack of laboratory data about the behavior of incompressible fluid flows which take place on an open, noncompact curved space, we do not have any physical considerations about the empirical behavior of viscous incompressible fluid flows on  $\mathbb{H}^2(-a^2)$ to take into account.
\item \textbf{Conservation of energy:} Regardless of the actual mechanism leading to the phenomena of energy dissipation, any viscous incompressible fluid flow which is smooth, and which occurs without the involvement of an external force should obey conservation of energy in that: the total kinetic energy of the fluid at the time $t > 0$ plus the cumulative portion of energy being lost due to dissipation up to the time $t$ equals the total kinetic energy of the fluid at the initial time $t =0$.
\end{itemize}

The lack of empirical data is here to stop one from borrowing conventional physical considerations of viscous incompressible fluid flows in the $\mathbb{R}^2$ setting as guiding principles in making one's judgement. For instance, the vanishing property of the pressure in the far range of the domain of the fluid flow is based on a large body of well-understood, well-accepted laboratory results. The lack of empirical facts about viscous incompressible fluid flows on an infinite open curved space like $\mathbb{H}^2(-a^2)$ makes these conventional considerations in the Euclidean setting no longer reliable in the new setting of $\mathbb{H}^2(-a^2)$.

Next, we arrive at the conservation of energy.  The description of this conservation of energy law as stated above should be understood in the sense of mathematics. The statement of the law is a mere theoretical statement in that it provides no information at all about the actual mechanism leading to dissipative phenomena in viscous fluid flows in the empirical sense.
This pure \emph{a priori} property of the conservation law of energy is exactly what makes it acquire the status of a grounding principle for our forthcoming choice in resolving the non-uniqueness dilemma.

Now, observe that option II has a broader scope when compared with option I. This indicates that one should first check with option II to see whether or not it is compatible with the \emph{a priori} principle of the conservation law of energy. If it were true that option II violates the conservation law of energy, then, one would have to abandon it and proceed to consider option I.

In order to validate option II as a choice compatible with conservation of energy, one needs to carry out the following three steps.
\begin{itemize}
\item \textbf{step I:} Show that $\Def \dd F \in L^2(\mathbb{H}^2(-a^2))$ for any $\dd F \in \mathbb{F}$.
\item \textbf{step II:} Observe that among the solutions in the infinite family \eqref{infinitemany}, $\exp (-2a^2) \dd F$ is the one and only one which satisfies the conservation of energy law.
\item \textbf{step III:} Build up a self-contained theory of finite energy, finitely dissipative, weak solutions which embraces $\textbf{H}\oplus \mathbb{F}$ as the functional class of admissible finite energy initial velocity fields, and which is compatible with the law of conservation of energy.
\end{itemize}
Among these steps, step I is the most crucial one since it would not make sense to talk about the conservation of energy if the finite dissipation property were violated. Lemma \ref{IMFinitedisspation} below indicates that {option II} survives the test of {step I}.   We emphasize that step I or equivalently Lemma \ref{IMFinitedisspation} cannot just follow from \cite{CC10}, where  $\Def \dd F \in L^2(\mathbb{H}^2(-a^2))$ was shown for harmonic functions $F$, which were obtained as solutions of the Dirichlet problem with $C^1$ data at infinity.  Since the boundary behavior of $F$ is unknown, we need a new approach.

{Step II} is confirmed in Section \ref{confirm}.  Step III is completed in Sections \ref{exist} and \ref{unique} through successfully building up the Leray-Hopf theory by means of the classical Faedo-Galerkin approximation method \cite[pp. 172-200]{Temam}.

Theorem \ref{IMPORTANTTHEOREM} is the cumulative result which satisfies all the requests in {step III}. However, the conclusion of Theorem \ref{IMPORTANTTHEOREM} remains equally valid if one were to replace $\textbf{H}\oplus \mathbb{F}$ by the more restrictive space $\textbf{H}$ as the functional class of admissible initial velocity fields. This means that the classical Faedo-Galerkin approximation method is {uncritical} in regard to the proper choice of the functional space of admissible initial velocity fields, as well as that of the functional class of weak solutions. This in turns indicates that the Faedo-Galerkin method, however powerful and useful as a {tool} leading to Theorem \ref{IMPORTANTTHEOREM}, cannot serve as a guiding principle in choosing the proper functional class of admissible initial velocity fields. Our attitude is: our whole decision is guided by the conservation of energy law as a {first principle}, and the Faedo-Galerkin method is only used as a tool in completing the quest in {step III} under the guidance of the conservation of energy.

 \subsection{Organization of the article}

The rest of the article is organized as follows. In Section \ref{FAH} we set up the tools for the functional analysis on $\H$.  This involves for example the definition of  the deformation tensor $\Def u$ in the weak sense and definitions of Sobolev spaces for $1$-forms using the deformation tensor.  We finish the section with a proof of the Ladyzhenskaya inequality on the hyperbolic space.

In Section \ref{MAINMAINMAIN} we establish several properties of the space $\F$ as a subspace of $H^1_0$.  For instance, we show that finite energy of the elements in the space implies their finite dissipation, and that $\F$ is orthogonal to $\V$ with respect to the inner product on $H^1_0$.

In Section \ref{AB} we define and give estimates for the variational Stokes operator $A$, and of the nonlinear operator $B$.

In Section \ref{exist} we carry out the Faedo-Galerkin procedure in seven steps to show the existence of the weak Leray-Hopf solution.  In the last step, we show the initial data is satisfied.

In Section \ref{unique} we give a short proof of uniqueness.  Section \ref{Briefsubsect} is devoted to the proofs of the corollaries.  Section \ref{pressure} treats the pressure.  There we use the language of the currents and follow DeRham \cite{DeRhamEng}. Finally, in Section \ref{EVERYTHINGSECT} using the Hodge-Kodaira decomposition \cite{Kodaira} we show Theorem \ref{everything}.

To minimize needed background in geometry, we placed technical computations in coordinates (with the exception of Lemma \ref{IMFinitedisspation} ) in Appendix \ref{appendixa} and the reader, if they so wish, can only refer to the final results quoted in the main body of the article.   
 
In Appendix \ref{awesome} we use elementary complex analysis to establish a lemma needed to show finite dissipation of the elements in $\F$.  Finally, Appendix \ref{faa} gathers functional analysis lemmas from \cite{Temam}, and we give a short proof regarding choosing a special kind of basis in a separable Banach space.
 \section*{Acknowledgements}

We would like to thank Pawe\l  \ Konieczny for discussions  in 2010 about the weak formulation, Helmholtz and Hodge decompositions and the non-uniqueness of Heywood.  The first author would like to thank Binghamton University and the second author would like to thank National Chiao Tung University and IMA at Minnesota for their hospitality during the respective visits when this work was completed.

We would like to thank M. Anderson, J. Colliander, R. Jerrard, M. Keel, J. Kelliher, B. Khesin, A. Mazzucato, M. Struwe, D. Sullivan, V. \v{S}ver\'ak and M. Taylor for their interest in our work.

The first author was partially supported by a grant from the National Science Council of Taiwan (NSC 101-2115-M-009-016-MY2).  The second author was partially supported by a grant from the Simons Foundation \#246255.

\section{Functional Analysis on $\H$}\label{FAH}

\subsection{Weak derivatives }

We denote by $g(\cdot ,\cdot )$ and $\overline{g}(\cdot , \cdot )$ the induced Riemannian metrics on $T^*\mathbb{H}^2(-a^2)$ and $\otimes^2 T^*\mathbb{H}^2(-a^2)$ respectively.

Recall that, for any smooth $1$-form $u$ on $\mathbb{H}^2(-a^2)$, $\Def u$ is, by definition, the symmetrization of the tensor $\overline{\nabla} u$, with $\overline{\nabla}$ to be the induced Levi-Civita connection acting on the space of smooth $1$-forms on $\mathbb{H}^2(-a^2)$.    Then the formal adjoint operator $$\Def^{\ast} : C^{\infty}(\otimes^2 T^* \mathbb{H}^2(-a^2) ) \rightarrow C^{\infty}(T^* \mathbb{H}^2(-a^2))$$ can be defined as follows
\begin{defn}
Given a smooth tensor $\theta\in C^{\infty}(\otimes^2 T^* \mathbb{H}^2(-a^2) )$, $\Def^* \theta$ is the uniquely determined smooth $1$-form on $\mathbb{H}^2(-a^2)$ for which the following relation holds for any smooth $1$-form $u $ which has compact support in $\mathbb{H}^2(-a^2)$
\begin{equation}\label{abstractcharacter}
\int_{\mathbb{H}^2(-a^2)} \overline{g} (\Def u , \theta ) \Vol_{\mathbb{H}^2(-a^2)} = \int_{\mathbb{H}^2(-a^2)} g(u , \Def^* \theta ) \Vol_{\mathbb{H}^2(-a^2)}.
\end{equation}
\end{defn}

We next give the definition of $\Def u \in L^2(\mathbb{H}^2(-a^2))$ in the weak sense for an $L^2(\mathbb{H}^2(-a^2))$-integrable vector field $u$ on $\mathbb{H}^2(-a^2)$.
\begin{defn}
[$\Def u$ \textbf{in the weak sense}]\label{DefofWeakDerivative} Let $u$  be an $L^2$-integrable $1$-form on $\mathbb{H}^2(-a^2)$. We say that $\Def u \in L^2(\mathbb{H}^2(-a^2))$ exists in the \emph{weak sense} if there exists some $L^2$-integrable tensor $\omega$ of type
$\otimes^2 T^* \mathbb{H}^2(-a^2)$ on $\mathbb{H}^2(-a^2)$ for which the following relation holds for any \emph{compactly supported} smooth tensor $\theta$ of type $\otimes^2 T^* \mathbb{H}^2(-a^2)$ on $\mathbb{H}^2(-a^2)$
\begin{equation}\label{abstractcharacterWEAK}
\int_{\mathbb{H}^2(-a^2)} \overline{g} (\omega , \theta ) \Vol_{\mathbb{H}^2(-a^2)} = \int_{\mathbb{H}^2(-a^2)} g(u , \Def^* \theta ) \Vol_{\mathbb{H}^2(-a^2)} .
\end{equation}
Such tensor $\omega$ will be understood as $\Def u$ in the weak sense, and we will simply write $\Def u = \omega$.
\end{defn}

For the purpose of dealing with the pressure term, we also need the following definitions of $\dd u$ and $\dd u^*$ in the weak sense.

\begin{defn}\label{Defford}
Let $u$ be an $L^2$-integrable $1$-form on $\mathbb{H}^2(-a^2)$. Then we say that $\dd u \in L^2(\mathbb{H}^2(-a^2))$ exists in the weak sense if there exists an $L^2$-integrable $2$-form $\omega$ on $\mathbb{H}^2(-a^2)$ for which the following relation holds for any smooth $2$-form $\phi$ with compact support on $\mathbb{H}^2(-a^2)$
\begin{equation}
\int_{\mathbb{H}^2(-a^2)} \overline{g} (\omega , \phi ) \Vol_{\mathbb{H}^2(-a^2)} = \int_{\mathbb{H}^2(-a^2)}g (u, \dd^* \phi ) \Vol_{\mathbb{H}^2(-a^2)}.
\end{equation}
In such a case, such $\omega$ will be  called $\dd u$ in the weak sense, and we will simply write $\omega = \dd u$.
\end{defn}

\begin{defn}\label{DeffordSTAR}
Let $u$  be an $L^2$-integrable $1$-form on $\mathbb{H}^2(-a^2)$, then, we say that $\dd^*u \in L^2(\mathbb{H}^2(-a^2))$ exists in the weak sense if there exists an $L^2$-integrable function $w$ on $\mathbb{H}^2(-a^2)$ for which the following relation holds for any smooth test function $\phi \in C_c^{\infty}(\mathbb{H}^2(-a^2))$
\begin{equation}
\int_{\mathbb{H}^2(-a^2)} w\cdot v \Vol_{\mathbb{H}^2(-a^2)} = \int_{\mathbb{H}^2(-a^2)} g (u, \dd \phi ) \Vol_{\mathbb{H}^2(-a^2)}.
\end{equation}
In this case, we say that $w$ is called $\dd^\ast  u$ in the weak sense, and we will write $w = \dd^*u$.
\end{defn}

\subsection{Sobolev Spaces}

Next we have the definition of the Sobolev space of $L^2$-integrable $1$-forms $u$ with $L^2$-weak derivative $\Def u$ on $\mathbb{H}^2(-a^2)$.
\begin{defn}\label{H1def}
$H^1 (\mathbb{H}^2(-a^2) , T^*\mathbb{H}^2(-a^2)  )$ is the space of all $L^2$-integrable $1$-forms $u$ with $L^2$-integrable weak derivatives $\Def u$ on $\mathbb{H}^2(-a^2)$. Moreover, $H^1 (\mathbb{H}^2(-a^2) , T^*\mathbb{H}^2(-a^2)  )$ is a Hilbert space equipped with the following inner product \begin{equation}
[u , v ] = \int_{\mathbb{H}^2(-a^2)} g (u , v) \Vol_{\mathbb{H}^2(-a^2)} + 2 \int_{\mathbb{H}^2(-a^2)} \overline{g} (\Def u , \Def v) \Vol_{\mathbb{H}^2(-a^2)}.
\end{equation}
Of course, the Sobolev norm of $H^1 (\mathbb{H}^2(-a^2) , T^*\mathbb{H}^2(-a^2)  )$ is given by $\|u\|_{H^1} = [u,u]^{\frac{1}{2}}$. In general, one will write the inner product structure $[u,v]$ in the following convenient manner.
\begin{equation}\label{H1norm}
[u,v] = (u,v) + ((u, v)) ,
\end{equation}
where the terms $(u,v)$ and $((u,v))$ are just
\begin{equation}\label{homogeneouspart}
\begin{split}
(u,v) & = \int_{\mathbb{H}^2(-a^2)} g (u , v) \Vol_{\mathbb{H}^2(-a^2)} ,\\
((u,v)) & = 2 \int_{\mathbb{H}^2(-a^2)} \overline{g} (\Def u , \Def v) \Vol_{\mathbb{H}^2(-a^2)}.
\end{split}
\end{equation}
The term $((u,u))^{\frac{1}{2}}$ is often called the homogeneous part of the $H^1$-norm $\|u\|_{H^1}$ of $u$ .
\end{defn}

\begin{remark}
For simplicity, we will often use the symbol $H^1(\mathbb{H}^2(-a^2))$ (or even just $H^1$) to abbreviate $H^1 (\mathbb{H}^2(-a^2) , T^*\mathbb{H}^2(-a^2)  )$. The same remark also applies when we speak of the $L^2$ norm of a  $1$-form $u$ on $\mathbb{H}^1(-a^2)$.

\end{remark}

Recall that $\Lambda_{c}^1(\mathbb{H}^2(-a^2))$ is the space of all smooth $1$-forms with compact support on $\mathbb{H}^2(-a^2)$.  Next

\begin{defn}\label{H1def0}
$H^1_{0}(\mathbb{H}^2(-a^2))$ is the completion of the space $\Lambda_{c}^1(\mathbb{H}^2(-a^2))$ with respect to the $H^1$-norm as defined in  \eqref{H1norm}.

\end{defn}

Next we have the following easy lemma, which asserts that $\Def u$ is the strongest type of a weak derivative when being compared with the weak derivatives $\dd u$ and $\dd^* u$ in that the latter two can be recovered from $\Def u$ alone.

\begin{lemma}\label{TrivialLemma}
For any $1$-form $u  \in H^1_{0}(\mathbb{H}^2(-a^2))$, it follows that both weak $\dd u \in L^2(\mathbb{H}^2(-a^2))$ and weak $\dd^* u \in L^2 (\mathbb{H}^2(-a^2))$ exist in the sense of Definitions \ref{Defford} and \ref{DeffordSTAR} respectively. Moreover, $\|\dd u\|_{L^2(\mathbb{H}^2(-a^2))}$ and $\|\dd^*u\|_{L^2(\mathbb{H}^2(-a^2))}$ are related to the homogeneous part $((u,u))^{\frac{1}{2}}$ of the $H^1$-norm of $u$ through the following identity

\begin{equation}\label{strongestderivative}
\begin{split}
((u,u)) & = 2\int_{\mathbb{H}^2(-a^2)} \overline{g} (\Def u , \Def u ) \Vol_{\mathbb{H}^2(-a^2)} \\
 & = 2\|\dd^*u\|_{L^2(\mathbb{H}^2(-a^2)}^2 + \|\dd u\|_{L^2(\mathbb{H}^2(-a^2))}^2 + 2a^2\|u\|_{L^2(\mathbb{H}^2(-a^2))}^2 .
\end{split}
\end{equation}
\end{lemma}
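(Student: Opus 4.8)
The plan is to establish \eqref{strongestderivative} first for smooth compactly supported $1$-forms, where every integration by parts is classically justified, and then to pass to all of $H^1_0(\H)$ by density, invoking Definition \ref{H1def0}.

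For the compactly supported case I would start from the Ebin--Marsden/Weitzenb\"ock identity recalled in the introduction,
\[ 2\Def^*\Def = (\dd\dd^* + \dd^*\dd) + \dd\dd^* - 2\Ric = \dd^*\dd + 2\dd\dd^* - 2\Ric, \]
valid as operators on smooth $1$-forms. Since $\H$ is two-dimensional with constant curvature $-a^2$, its Ricci operator acts on $1$-forms by $\Ric = -a^2\,\mathrm{Id}$, so that $-2\Ric(u) = 2a^2 u$. For $u \in \Lambda_c^1(\H)$ I would pair this identity with $u$ in $L^2$, use \eqref{abstractcharacter} with $\theta = \Def u$ to rewrite $\int g(u,\Def^*\Def u) = \int \overline{g}(\Def u,\Def u)$, and integrate by parts through the formal adjunction of $\dd$ and $\dd^*$ to obtain $\int g(u,\dd^*\dd u) = \int \overline{g}(\dd u,\dd u) = \norm{\dd u}_{L^2}^2$ and $\int g(u,\dd\dd^* u) = \int \abs{\dd^* u}^2 = \norm{\dd^* u}_{L^2}^2$. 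This would yield
\[ 2\norm{\Def u}_{L^2}^2 = \norm{\dd u}_{L^2}^2 + 2\norm{\dd^* u}_{L^2}^2 + 2a^2\norm{u}_{L^2}^2, \]
which is \eqref{strongestderivative} for compactly supported $u$. Equivalently, one may verify this by the coordinate computation in Appendix \ref{appendixa}.

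For general $u \in H^1_0(\H)$ I would choose $u_n \in \Lambda_c^1(\H)$ with $u_n \to u$ in the $H^1$-norm of Definition \ref{H1def} and apply the identity just proved to the difference $u_n - u_m$, giving
\[ \norm{\dd(u_n-u_m)}_{L^2}^2 + 2\norm{\dd^*(u_n-u_m)}_{L^2}^2 = 2\norm{\Def(u_n-u_m)}_{L^2}^2 - 2a^2\norm{u_n-u_m}_{L^2}^2 . \]
The right-hand side tends to $0$ as $n,m \to \infty$ because $(u_n)$ is Cauchy in $H^1$, so $(\dd u_n)$ and $(\dd^* u_n)$ are Cauchy in $L^2$ and converge to some $2$-form $\alpha$ and some function $\beta$. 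Passing to the limit in the integration-by-parts relations $\int \overline{g}(\dd u_n,\phi) = \int g(u_n,\dd^*\phi)$ and $\int (\dd^* u_n)\,\phi = \int g(u_n,\dd\phi)$, valid for each fixed compactly supported test object $\phi$, would identify $\alpha$ and $\beta$ as the weak $\dd u$ and weak $\dd^* u$ of Definitions \ref{Defford} and \ref{DeffordSTAR}. Letting $n \to \infty$ in the identity for $u_n$, using $\norm{\Def u_n}_{L^2} \to \norm{\Def u}_{L^2}$, $\norm{u_n}_{L^2} \to \norm{u}_{L^2}$ together with the $L^2$-convergences just obtained, would then give \eqref{strongestderivative} for $u$.

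The only delicate point is the bookkeeping in the first step: one must use the correct normalization $\Ric = -a^2\,\mathrm{Id}$ on $\H$ to produce the coefficient $2a^2$ and keep the factor $2$ in front of $\dd\dd^*$, since either slip would destroy the clean identity. The density step is then essentially automatic, because the identity itself furnishes the Cauchy estimate that forces the weak derivatives $\dd u$ and $\dd^* u$ to exist; no regularity input beyond $u \in H^1_0(\H)$ is required.
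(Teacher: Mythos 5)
Your proposal is correct and follows essentially the same route as the paper's proof: the identity $2\Def^*\Def = \dd^*\dd + 2\dd\dd^* + 2a^2$ on $\Lambda_c^1(\H)$ (the paper's \eqref{defnDefstarDef}, which you additionally rederive from the Ebin--Marsden operator and $\Ric = -a^2\,\mathrm{Id}$), integration by parts for compactly supported forms, and then the density/Cauchy argument that both produces the weak derivatives $\dd u$, $\dd^* u$ and passes the identity to the limit. No gaps.
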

\begin{proof}
Using
\be\label{defnDefstarDef}
2\Def^* \Def w = 2 \dd \dd^*w + \dd^* \dd w + 2a^2w,
\ee
 which holds for all $w \in \Lambda_c^1(\mathbb{H}^2(-a^2))$,  we integrate by parts to obtain 
\begin{equation}\label{intbypartseasy}
\begin{split}
& 2\int_{\mathbb{H}^2(-a^2)} \overline{g} \big ( \Def w , \Def w \big ) \Vol_{\mathbb{H}^2(-a^2)}
= 2   \int_{\mathbb{H}^2(-a^2)} g(w , \Def^* \Def w ) \Vol_{\mathbb{H}^2(-a^2)} \\
&\quad=2 \int_{\mathbb{H}^2(-a^2)} g(w, \dd \dd ^* w) \Vol_{\mathbb{H}^2(-a^2)} + \int_{\mathbb{H}^2(-a^2)} g(w , \dd^* \dd w )\Vol_{\mathbb{H}^2(-a^2)} \\
&\qquad\qquad + 2a^2 \int_{\mathbb{H}^2(-a^2)} g(w, w) \Vol_{\mathbb{H}^2(-a^2)} \\
&\quad= 2\int_{\mathbb{H}^2(-a^2)} |\dd^*w|^2  \Vol_{\mathbb{H}^2(-a^2)} + \int_{\mathbb{H}^2(-a^2)} \overline{g} (\dd w , \dd w ) \Vol_{\mathbb{H}^2(-a^2)} \\
&\qquad\qquad + 2a^2 \int_{\mathbb{H}^2(-a^2)} g(w, w) \Vol_{\mathbb{H}^2(-a^2)} .
\end{split}
\end{equation}
Now, we take any $1$-form $u \in H^1_0(\mathbb{H}^2(-a^2))$. Then, we can take, by the definition of $H^1_0(\mathbb{H}^2(-a^2))$, a sequence $\{w_m\}_{m=1}^{\infty} \subset \Lambda_{c}^1(\mathbb{H}^2(-a^2))$ such that we have $\|w_m - u\|_{H^1} \rightarrow 0$, as $m\rightarrow \infty$. But identity \eqref{intbypartseasy} gives the following relation, for any positive integers $k,l$
\begin{equation}\label{EASY}
\begin{split}
\|w_k- w_l\|_{H^1}^2 \geq & 2\int_{\mathbb{H}^2(-a^2)} \overline{g} \big ( \Def (w_k-w_l) , \Def (w_k-w_l) \big ) \Vol_{\mathbb{H}^2(-a^2)} \\
= & 2\int_{\mathbb{H}^2(-a^2)} |\dd^*(w_k -w_l)|^2  \Vol_{\mathbb{H}^2(-a^2)} \\
&\quad+ \int_{\mathbb{H}^2(-a^2)} \overline{g} (\dd (w_k-w_l) , \dd (w_k-w_l) ) \Vol_{\mathbb{H}^2(-a^2)} \\
&\quad + 2a^2 \int_{\mathbb{H}^2(-a^2)} g(w_k-w_l, w_k-w_l) \Vol_{\mathbb{H}^2(-a^2)} .
\end{split}
\end{equation}
since $\|w_k- w_l\|_{H^1}\rightarrow 0$, as $k,l\rightarrow \infty$, it follows from \eqref{EASY} that the sequence $\{\dd^* w_m\}_{m=1}^{\infty}$ of smooth functions is Cauchy in $L^2(\mathbb{H}^2(-a^2))$, and also that the sequence $\{\dd w_m\}_{m=1}^{\infty}$ of smooth $2$-forms is Cauchy in the Banach space of $L^2$-integrable $2$-forms on $\mathbb{H}^2(-a^2)$. So, there must exist some unique limiting function $f^* \in L^2(\mathbb{H}^2(-a^2))$, and some unique limiting $L^2$-integrable $2$-form $\omega^* \in L^2(\mathbb{H}^2(-a^2))$ such that we have
\begin{equation}\label{GoodLimitRel}
\begin{split}
\lim_{m\rightarrow \infty } \|\dd^*w_m - f^*\|_{L^2(\mathbb{H}^2(-a^2))} & = 0 , \\
\lim_{m\rightarrow \infty } \|\dd w_m - \omega^* \|_{L^2(\mathbb{H}^2(-a^2))} & = 0 .
\end{split}
\end{equation}
Next, observe that we definitely have the following relation, with $\phi \in C_c^{\infty}(\mathbb{H}^2(-a^2))$ to be any test function.
\begin{equation}\label{supone}
\int_{\mathbb{H}^2(-a^2)} \dd^* w_m \phi \Vol_{\mathbb{H}^2(-a^2)} =  \int_{\mathbb{H}^2(-a^2)} g(w_m , \dd\phi ) \Vol_{\mathbb{H}^2(-a^2)} .
\end{equation}
Then, the first line of \eqref{GoodLimitRel} allows us to pass to the limit in \eqref{supone} to yield the following relation,
\begin{equation}
\int_{\mathbb{H}^2(-a^2)} f^* \phi \Vol_{\mathbb{H}^2(-a^2)} =  \int_{\mathbb{H}^2(-a^2)} g(u , \dd\phi ) \Vol_{\mathbb{H}^2(-a^2)} .
\end{equation}
This indicates that the relation $\dd^*u = f^* \in L^2(\mathbb{H}^2(-a^2))$ holds in the weak sense. Also, we can consider the following identity, which holds for all $\eta \in \Lambda_c^2(\mathbb{H}^2(-a^2))$
\begin{equation}\label{suptwo}
\int_{\mathbb{H}^2(-a^2)} \overline{g}(\dd w_m , \eta ) \Vol_{\mathbb{H}^2(-a^2)} = \int_{\mathbb{H}^2(-a^2)} g(w_m, \dd^* \eta ) \Vol_{\mathbb{H}^2(-a^2)}.
\end{equation}
Then, the second line of \eqref{GoodLimitRel} allows us to pass to the limit in \eqref{suptwo} to yield the following relation  \begin{equation}
\int_{\mathbb{H}^2(-a^2)} \overline{g}(\omega^* , \eta ) \Vol_{\mathbb{H}^2(-a^2)} = \int_{\mathbb{H}^2(-a^2)} g(u, \dd^* \eta ) \Vol_{\mathbb{H}^2(-a^2)}.
\end{equation}
This means that $du = w^* \in L^2(\mathbb{H}^2(-a^2))$ holds in the weak sense. Finally, notice that relation \eqref{intbypartseasy} gives
\begin{equation}\label{LastEasy}
\begin{split}
& 2\int_{\mathbb{H}^2(-a^2)} \overline{g} \big ( \Def w_m , \Def w_m \big ) \Vol_{\mathbb{H}^2(-a^2)} \\
= & 2\int_{\mathbb{H}^2(-a^2)} |\dd^*w_m|^2  \Vol_{\mathbb{H}^2(-a^2)} + \int_{\mathbb{H}^2(-a^2)} \overline{g} (\dd w_m , \dd w_m ) \Vol_{\mathbb{H}^2(-a^2)} \\
& + 2a^2 \int_{\mathbb{H}^2(-a^2)} g(w_m, w_m) \Vol_{\mathbb{H}^2(-a^2)} .
\end{split}
\end{equation}
By passing to the limit in \eqref{LastEasy}, as $m\rightarrow \infty$, it follows that identity \eqref{strongestderivative} must be valid for the $1$-form $u \in H^1_0(\mathbb{H}^2(-a^2))$ as desired.
\end{proof}
\begin{cor}[Equivalent norm on $H^1_0$]\label{equivH1}
Let  $u$ be a $1$-form.  If $u\in H^1_0(\H)$, then $\norm{\overline{\nabla}u}_{L^2}$
is finite
and $\norm{\cdot}_{L^2}+\norm{\overline{\nabla}\cdot}_{L^2}$ defines an equivalent norm on $H^1_0$, where $\overline{\nabla}$ is understood in a week sense analogous to Definition \eqref{DefofWeakDerivative} (using $\overline{\nabla}^\ast$ in place of $\Def^\ast$).
\end{cor}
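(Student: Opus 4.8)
The proof follows the template of Lemma~\ref{TrivialLemma} almost verbatim, the only genuinely new ingredient being a pointwise comparison between the full covariant derivative and the deformation tensor. First I would record the pointwise orthogonal splitting of the $2$-tensor $\overline{\nabla}w$ into its symmetric and antisymmetric parts: by definition $\Def w$ is the symmetric part, while, since the Levi-Civita connection is torsion-free, the antisymmetric part is a fixed nonzero multiple of the $2$-form $\dd w$. As symmetric and antisymmetric tensors are orthogonal for $\overline{g}$, this yields, pointwise on $\H$ and hence after integration,
\[
\norm{\overline{\nabla}w}_{L^2}^2=\norm{\Def w}_{L^2}^2+c\,\norm{\dd w}_{L^2}^2,\qquad w\in\Lambda_c^1(\H),
\]
for a fixed constant $c>0$ coming from the normalization of the $2$-form inner product. (Equivalently one may invoke the Weitzenb\"ock identity $\overline{\nabla}^*\overline{\nabla}=\dd\dd^*+\dd^*\dd+a^2$ on $\H$, coming from $\Ric=-a^2g$, which after integration by parts as in \eqref{intbypartseasy} gives $\norm{\overline{\nabla}w}_{L^2}^2=\norm{\dd^*w}_{L^2}^2+\norm{\dd w}_{L^2}^2+a^2\norm{w}_{L^2}^2$.) Either form is verified by the same coordinate computation that produces \eqref{defnDefstarDef}.

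Next I would run the Cauchy-sequence argument of Lemma~\ref{TrivialLemma}. For $u\in H^1_0(\H)$ choose $\set{w_m}\subset\Lambda_c^1(\H)$ with $\norm{w_m-u}_{H^1}\to0$. Lemma~\ref{TrivialLemma} already supplies $\dd^*w_m\to\dd^*u$ and $\dd w_m\to\dd u$ in $L^2$, and of course $w_m\to u$ in $L^2$. Feeding $w_k-w_l$ into the displayed identity shows $\set{\overline{\nabla}w_m}$ is Cauchy in the $L^2$-space of $\otimes^2T^*\H$-tensors, so it converges to some $\Xi\in L^2$; passing to the limit in the relation $\int_{\H}\overline{g}(\overline{\nabla}w_m,\theta)\,\Vol_{\H}=\int_{\H}g(w_m,\overline{\nabla}^*\theta)\,\Vol_{\H}$ (for compactly supported smooth $\theta$) identifies $\Xi$ with the weak covariant derivative $\overline{\nabla}u$ in the sense dictated by Definition~\ref{DefofWeakDerivative}. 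In particular $\norm{\overline{\nabla}u}_{L^2}<\infty$, and letting $m\to\infty$ in the identity extends it to every $u\in H^1_0(\H)$.

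Finally, the norm equivalence is elementary algebra. Combining the limiting identity with the expression for the homogeneous part in \eqref{strongestderivative}, both $\norm{u}_{H^1}^2=\norm{u}_{L^2}^2+((u,u))$ and $\norm{u}_{L^2}^2+\norm{\overline{\nabla}u}_{L^2}^2$ become nonnegative linear combinations of the same three quantities $\norm{u}_{L^2}^2,\norm{\dd^*u}_{L^2}^2,\norm{\dd u}_{L^2}^2$, with coefficients bounded above and below by positive constants depending only on $a$. Comparing coefficient by coefficient gives a two-sided bound $C_1\big(\norm{u}_{L^2}^2+\norm{\overline{\nabla}u}_{L^2}^2\big)\le\norm{u}_{H^1}^2\le C_2\big(\norm{u}_{L^2}^2+\norm{\overline{\nabla}u}_{L^2}^2\big)$, and taking square roots together with the elementary comparison of $\sqrt{x^2+y^2}$ and $x+y$ for $x,y\ge0$ converts this into the asserted equivalence of $\norm{\cdot}_{H^1}$ with $\norm{\cdot}_{L^2}+\norm{\overline{\nabla}\cdot}_{L^2}$. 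The only step requiring real work is the first one; everything afterwards is a transcription of Lemma~\ref{TrivialLemma} and a bookkeeping of positive constants.
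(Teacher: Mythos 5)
Your proposal is correct, and its parenthetical route is in fact the paper's entire proof: the paper disposes of this corollary in one line by citing \eqref{strongestderivative} together with the Weitzenb\"ock formula, which on $\H$ (where $\Ric$ acts as $-a^2$) gives, after the same integration by parts as in \eqref{intbypartseasy}, precisely your identity $\norm{\overline{\nabla}w}_{L^2}^2=\norm{\dd^*w}_{L^2}^2+\norm{\dd w}_{L^2}^2+a^2\norm{w}_{L^2}^2$ for $w\in\Lambda^1_c(\H)$. Your primary route is a mild but genuine variant: instead of a curvature identity, you split $\overline{\nabla}w$ pointwise into its symmetric part $\Def w$ and its antisymmetric part, identify the latter with a fixed multiple of $\dd w$ via torsion-freeness, and use orthogonality of symmetric and antisymmetric $2$-tensors; combined with \eqref{strongestderivative} this yields the same three-term expression (with $c=\tfrac12$ under the usual identification of $2$-forms with antisymmetric tensors --- the exact constant, which depends on the paper's normalization of the inner product on $2$-forms, is immaterial to the argument). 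What your write-up adds over the paper is the explicit Cauchy-sequence/density step transcribing Lemma~\ref{TrivialLemma}, which the paper's one-line proof leaves implicit but which is indeed needed both to make sense of the weak derivative $\overline{\nabla}u$ for general $u\in H^1_0(\H)$ and to extend the identity from $\Lambda^1_c(\H)$ to $H^1_0(\H)$; your coefficientwise comparison of the two squared norms then finishes the equivalence correctly. In short: same skeleton as the paper, with a more elementary derivation of the key pointwise/integral identity and the omitted bookkeeping filled in.
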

\begin{proof}
The proof relies on \eqref{strongestderivative} and on the Weitzenb\"ock formula (see for example \cite{Taylor3})
\[
-\overline{\nabla}^\ast\overline{\nabla}=-\Delta + \Ric.
\]
\end{proof}

\subsection{Ladyzhenskaya inequality}
To prove the Ladyzhenskaya inequality we use
\begin{lemma}[Sobolev embedding on a hyperbolic space]\cite[Proposition 3.6, p. 54]{Hebey}  Let $f$ be a function on $\H$.  Then
\be\label{s11}
\norm{f}_{L^2(\H)}\leq C\large(\norm{f}_{L^1(\H)}+\norm{\grad f}_{L^1(\H)}\large).
\ee
\end{lemma}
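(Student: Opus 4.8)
The plan is to reconstruct the proof of this $W^{1,1}\hookrightarrow L^2$ embedding by localizing to balls on which $\H$ is uniformly comparable to Euclidean space and then applying the critical Euclidean Sobolev inequality $\norm{g}_{L^2(\R^2)}\le C\norm{\grad g}_{L^1(\R^2)}$ (the $p=1$, $n=2$ Gagliardo--Nirenberg--Sobolev inequality, for which $n/(n-1)=2$, which is why one lands in $L^2$). The crucial input is that $\H$ has \emph{bounded geometry with uniform constants}: being a homogeneous space of constant curvature $-a^2$, it is complete, its Ricci curvature is bounded below, and its injectivity radius is infinite. Consequently there is a fixed radius $r_0>0$ and a fixed constant $\Lambda\ge 1$ such that on every geodesic ball $B(x,r_0)$ the metric is comparable to the flat metric up to the factor $\Lambda$, with $\Lambda$ and $r_0$ independent of $x$ by homogeneity.

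First I would fix such an $r_0$ and choose a countable cover $\set{B(x_i,r_0)}_i$ of $\H$ so that the half-radius balls $\set{B(x_i,r_0/2)}_i$ still cover $\H$ and the cover has finite multiplicity $N$ (each point lies in at most $N$ of the $B(x_i,r_0)$); such a cover exists by a standard packing argument using homogeneity. Subordinate to it I would take a \emph{quadratic} partition of unity $\set{\eta_i}$, i.e.\ smooth cutoffs with $\supp\eta_i\subset B(x_i,r_0)$, $\sum_i\eta_i^2\equiv 1$, and a uniform gradient bound $\abs{\grad\eta_i}\le C_0$ (again uniform by homogeneity, via normal coordinates at each $x_i$). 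Setting $g_i=\eta_i f$, the identity $\sum_i\eta_i^2=1$ gives
\be
\norm{f}_{L^2(\H)}^2=\sum_i\norm{g_i}_{L^2(\H)}^2.
\ee

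Next, since $\supp g_i$ sits inside a single ball on which the metric is $\Lambda$-comparable to the Euclidean one, I would transplant $g_i$ to $\R^2$ and apply the critical Euclidean Sobolev inequality, absorbing the metric distortion into the constant, to get $\norm{g_i}_{L^2(\H)}\le C\norm{\grad g_i}_{L^1(\H)}$. Using $\sum_i a_i^2\le\big(\sum_i a_i\big)^2$ for $a_i\ge0$, then the Leibniz rule $\grad g_i=\eta_i\grad f+f\grad\eta_i$ together with $\abs{\grad\eta_i}\le C_0$, the finite multiplicity bound $\sum_i\mathbf 1_{B_i}\le N$, and Cauchy--Schwarz (which gives $\sum_i\eta_i\le\sqrt N$ pointwise), I would estimate
\be
\norm{f}_{L^2(\H)}^2\le C\Big(\sum_i\norm{\grad g_i}_{L^1(\H)}\Big)^2\le C'\big(\norm{f}_{L^1(\H)}+\norm{\grad f}_{L^1(\H)}\big)^2,
\ee
which is exactly \eqref{s11}. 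Note that the lower-order term $\norm{f}_{L^1}$ on the right is produced precisely by the cutoff contribution $f\grad\eta_i$, explaining why the full $W^{1,1}$ norm (rather than $\norm{\grad f}_{L^1}$ alone) appears.

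The step I expect to be the main obstacle is making the localization genuinely uniform: producing a single radius $r_0$, distortion factor $\Lambda$, multiplicity $N$, and cutoff bound $C_0$ valid at every point, so that the Euclidean Sobolev constant transplants with controlled distortion and the summation over $i$ does not accumulate. This is exactly where the homogeneity of $\H$ (equivalently, its bounded geometry with curvature and injectivity radius constant in $x$) is indispensable; on a general negatively curved manifold one would have to \emph{assume} bounded geometry to run the same argument. The remaining steps---the Euclidean inequality, the partition-of-unity bookkeeping, and the elementary inequality $\sum_i a_i^2\le\big(\sum_i a_i\big)^2$---are routine once the uniform cover is in place.
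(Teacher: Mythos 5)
The paper offers no proof of this lemma at all --- it is quoted directly from Hebey's book --- and your reconstruction is correct and is essentially the standard argument behind the cited result: a uniformly locally finite cover of $\H$ by geodesic balls of fixed radius on which the metric is uniformly comparable to the Euclidean one (using homogeneity, constant curvature $-a^2$, and infinite injectivity radius), a subordinate quadratic partition of unity with uniform gradient bounds, and the critical Euclidean inequality $\norm{g}_{L^2(\R^2)}\leq C\norm{\grad g}_{L^1(\R^2)}$ applied chartwise, with the cutoff terms $f\grad\eta_i$ correctly accounting for the lower-order $\norm{f}_{L^1}$ on the right-hand side. Your bookkeeping (the identity $\norm{f}_{L^2}^2=\sum_i\norm{\eta_i f}_{L^2}^2$, the elementary bound $\sum_i a_i^2\leq\bigl(\sum_i a_i\bigr)^2$, and the multiplicity estimates $\sum_i\eta_i\leq\sqrt{N}$ and $\sum_i\abs{\grad\eta_i}\leq NC_0$) is sound, so apart from the routine density step extending from smooth compactly supported $f$ to general $W^{1,1}$ functions, there is nothing to correct.
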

\begin{lemma}[Ladyzhenskaya on a hyperbolic space] \label{LadyZhenskaya} Let $u$ be a $1$-form in  $H^1_0(\H)$, where $H^1_0(\H)$ is defined as in Definition \ref{H1def0}, then
\be\label{ldh}
\norm{u}_{4}\leq\sqrt{2 C} \norm{u}^\frac 12_{2}\norm{u}^\frac 12_{H^1},
\ee
where $C$ is the constant appearing in the Sobolev embedding \eqref{s11}.
\end{lemma}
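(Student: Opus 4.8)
The plan is to reduce the $L^4$ estimate for the $1$-form $u$ to a scalar estimate for the function $f=|u|^2$, and then feed $f$ into the Sobolev embedding \eqref{s11}. Concretely, I would set $f=|u|^2=g(u,u)$, so that $\norm{u}_4^2=\norm{f}_{L^2(\H)}$. Applying \eqref{s11} to $f$ gives
\[
\norm{u}_4^2=\norm{f}_{L^2(\H)}\leq C\big(\norm{f}_{L^1(\H)}+\norm{\grad f}_{L^1(\H)}\big)
= C\big(\norm{u}_2^2+\norm{\grad |u|^2}_{L^1(\H)}\big).
\]
The first term on the right is already in the desired form. The whole game is therefore to control $\norm{\grad|u|^2}_{L^1(\H)}$ by $\norm{u}_2\,\norm{u}_{H^1}$.

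For the gradient term I would first establish the pointwise Kato-type bound $|\grad|u|^2|\leq 2\,|u|\,|\overline\nabla u|$, which follows from differentiating $|u|^2=g(u,u)$ and using metric compatibility of the Levi-Civita connection $\overline\nabla$ together with Cauchy--Schwarz. Integrating and applying the Cauchy--Schwarz inequality in $L^2(\H)$ then yields
\[
\norm{\grad|u|^2}_{L^1(\H)}\leq 2\int_{\H}|u|\,|\overline\nabla u|\,\Vol_{\H}
\leq 2\,\norm{u}_{2}\,\norm{\overline\nabla u}_{2}.
\]
By Corollary \ref{equivH1}, for $u\in H^1_0(\H)$ the quantity $\norm{\overline\nabla u}_{2}$ is finite and controlled by $\norm{u}_{H^1}$, so this term is bounded by $2\,\norm{u}_2\,\norm{u}_{H^1}$ up to the equivalence constant. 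Combining with the first term and absorbing $\norm{u}_2^2\leq \norm{u}_2\norm{u}_{H^1}$ gives $\norm{u}_4^2\leq 2C\,\norm{u}_2\,\norm{u}_{H^1}$, and taking square roots produces \eqref{ldh}. To make the constant come out exactly as $\sqrt{2C}$, I would either arrange the two contributions to combine into the single factor $2\,\norm{u}_2\norm{u}_{H^1}$ using the definition of $\norm{u}_{H^1}$ (which already contains $\norm{u}_2^2$ plus the dissipation), or simply verify the stated constant at the end.

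I expect two points to require care rather than deep ideas. First, the Sobolev lemma as quoted is stated for functions, and one must justify that $f=|u|^2$ is admissible: since $u\in H^1_0(\H)$, one approximates by compactly supported smooth forms $w_m$, works with the smooth scalars $|w_m|^2$ where \eqref{s11} applies cleanly, and passes to the limit. The technical step is checking that $|w_m|^2\to|u|^2$ in $L^2$ and $\grad|w_m|^2\to\grad|u|^2$ in $L^1$, which follows from the convergence $w_m\to u$ in $H^1$ together with the Kato bound and Cauchy--Schwarz, so the inequality survives the limit. Second, one must be slightly careful that the relevant norms are those for $1$-forms and that the metric $\overline g$ on $\otimes^2 T^*\H$ is used consistently when estimating $|\overline\nabla u|$; this is the main place where passing from the scalar statement to the $1$-form statement could introduce an error, but Corollary \ref{equivH1} is precisely what bridges $\norm{\overline\nabla u}_2$ and $\norm{u}_{H^1}$, so no genuine obstacle remains.
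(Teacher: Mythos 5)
Your proposal is correct and follows essentially the same route as the paper's own proof: apply the Sobolev embedding \eqref{s11} to the scalar $|u|^2$, control $|\grad|u|^2|$ by the Kato-type bound \eqref{ip2} together with Cauchy--Schwarz to get $\norm{\grad|u|^2}_{L^1}\leq 2\norm{u}_2\norm{\nablab u}_2$, and then bound $\norm{\nablab u}_2$ by the $H^1$-norm (the paper does this implicitly, with Corollary \ref{equivH1} stated just beforehand). Your two added precautions---approximating by compactly supported smooth forms before applying the scalar embedding, and checking how the two terms combine into the stated constant $\sqrt{2C}$---are exactly the points the paper's proof passes over silently, so they are refinements of, not departures from, its argument.
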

\begin{proof}
Apply \eqref{s11} to the function $\abs{u}^2$, then
\begin{align*}
\norm{u}_4^2=\norm{\abs{u}^2}_2&\leq C \norm{\abs{u}^2}_1+C\int_{\H}\abs{\grad \abs{u}^2} \Vol_{\mathbb{H}^2(-a^2)}\\
&=C \large(\norm{u}_2^2+\int_{\H}\abs{\grad \abs{u}^2}\Vol_{\H}\large).
\end{align*}
By \eqref{ip2} and Cauchy-Schwarz
\[
\int_{\H}\abs{\grad \abs{u}^2}\Vol_{\H}\leq 2 \norm{u}_2\norm{\nablab u}_2.
\]
It follows
\[
\norm{u}_4^2\leq C \large(2\norm{u}_2\norm{\nablab u}_2+\norm{u}^2_2\large)\leq2C\norm{u}_2\norm{u}_{H^1}.
\]

\end{proof}
 \begin{remark}
 We note that in comparison to the Ladyzhenskaya inequality on $\R^2$ we have the nonhomogenous $H^1$ norm appear on the right-hand side instead of just $\dot H^1$.  
 \end{remark}

\section{The space of finite energy gradient flows}\label{MAINMAINMAIN}

Consider the following vector space 
\begin{equation}\label{GD}
\mathbb{F} = \{\dd F \in L^2(\mathbb{H}^2(-a^2)) : \triangle F = 0 \} .
\end{equation}

We start with a technical lemma.
\begin{lemma}\label{EstDissLEMMA}
Let $F$ be an arbitrary smooth function on $\mathbb{H}^2(-a^2)$. Then, the following estimate holds provided the right-hand side is finite
\begin{equation}\label{EstofDissipation}
\norm{ \overline{\nabla} (\nabla F )}_{L^2(\H)}^2 \leq 10{a^2}\int_{D_O(1)} \bigg \{ |\nabla^{\mathbb{R}^2}(F\circ Y^{-1})|^2
+ (1-|y|^2)^2 |\nabla^{\mathbb{R}^2} \nabla^{\mathbb{R}^2} (F\circ Y^{-1})|^2   \bigg \} \dd y_1 \dd y_2 .
\end{equation}
In the above expression, $D_O(1)$ is the unit Euclidean disc, and $Y : \mathbb{H}^2(-a^2) \rightarrow D_O(1)$ is the coordinate system given in \eqref{PoincareDisc}. The symbol $|\nabla^{\mathbb{R}^2} \nabla^{\mathbb{R}^2} f|^2$ means $|\nabla^{\mathbb{R}^2} \nabla^{\mathbb{R}^2} f|^2 = \sum_{1\leq i,j \leq 2} |\partial_{y_{i}} \partial_{y_{j}}f|^2$.
\end{lemma}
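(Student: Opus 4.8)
The plan is to pass to the Poincar\'e disc coordinates $Y$ of \eqref{PoincareDisc} and reduce the intrinsic quantity $\norm{\overline{\nabla}(\nabla F)}_{L^2(\H)}^2$ to a weighted Euclidean integral over $D_O(1)$. Writing $\tilde F = F\circ Y^{-1}$, I would first record that in these coordinates the hyperbolic metric is conformal to the Euclidean one, $\overline g = \lambda^2\delta$ with $\lambda = \frac{2}{a(1-|y|^2)}$ (the conformal factor compatible with curvature $-a^2$), so that $\overline g^{ij} = \lambda^{-2}\delta^{ij}$, $\Vol_{\H} = \lambda^2\,\dd y_1\,\dd y_2$, and crucially $\lambda^{-2} = \frac{a^2}{4}(1-|y|^2)^2$. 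Since $\overline{\nabla}$ is the Levi-Civita connection and $\nabla F$ is identified with the exact $1$-form $\dd F$, the tensor $\overline{\nabla}(\nabla F)$ is the (symmetric) covariant Hessian, with coordinate components $H_{ij} = \partial_i\partial_j\tilde F - \Gamma^k_{ij}\partial_k\tilde F$.

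With the norm on $\otimes^2 T^*\H$ induced by $\overline g$, the integrand is $|\overline{\nabla}(\nabla F)|^2_{\overline g} = \lambda^{-4}\sum_{i,j}H_{ij}^2$; multiplying by the volume density $\lambda^2$, the two conformal factors collapse to a single $\lambda^{-2}$:
\be
\norm{\overline{\nabla}(\nabla F)}_{L^2(\H)}^2 = \int_{D_O(1)} \lambda^{-2}\sum_{i,j}H_{ij}^2\,\dd y_1\,\dd y_2 .
\ee
Next I would insert the Christoffel symbols of a conformal metric, $\Gamma^k_{ij} = \delta^k_i\partial_j\phi + \delta^k_j\partial_i\phi - \delta_{ij}\partial_k\phi$ with $\phi = \log\lambda$, so that $\nabla^{\mathbb{R}^2}\phi = \frac{2y}{1-|y|^2}$ and $|\nabla^{\mathbb{R}^2}\phi|^2 = \frac{4|y|^2}{(1-|y|^2)^2}$.

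The crux is the bookkeeping of the two resulting contributions. Splitting $H_{ij}$ and using $(a+b)^2\le 2a^2+2b^2$ gives $\sum_{i,j}H_{ij}^2 \le 2|\nabla^{\mathbb{R}^2}\nabla^{\mathbb{R}^2}\tilde F|^2 + 2\sum_{i,j}(\Gamma^k_{ij}\partial_k\tilde F)^2$. A short algebraic identity — writing $\Gamma^k_{ij}\partial_k\tilde F = p_iv_j + p_jv_i - \delta_{ij}(p\cdot v)$ with $p = \nabla^{\mathbb{R}^2}\phi$ and $v = \nabla^{\mathbb{R}^2}\tilde F$ — collapses the cross terms and yields the clean relation $\sum_{i,j}(\Gamma^k_{ij}\partial_k\tilde F)^2 = 2|\nabla^{\mathbb{R}^2}\phi|^2\,|\nabla^{\mathbb{R}^2}\tilde F|^2$. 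Multiplying through by $\lambda^{-2} = \frac{a^2}{4}(1-|y|^2)^2$, the factor $(1-|y|^2)^2$ attaches precisely to the Euclidean Hessian term, while in the first-order term it cancels the $(1-|y|^2)^{-2}$ carried by $|\nabla^{\mathbb{R}^2}\phi|^2$, leaving an unweighted $|\nabla^{\mathbb{R}^2}\tilde F|^2$. Concretely this produces $\tfrac{a^2}{2}(1-|y|^2)^2|\nabla^{\mathbb{R}^2}\nabla^{\mathbb{R}^2}\tilde F|^2 + 4a^2|y|^2|\nabla^{\mathbb{R}^2}\tilde F|^2$, and bounding $|y|^2<1$ gives exactly the two terms on the right of \eqref{EstofDissipation}, with the non-sharp constant $10a^2$ comfortably dominating $\tfrac{a^2}{2}$ and $4a^2$.

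I expect the only genuine work to be the algebraic contraction of the Christoffel term and the careful matching of conformal powers of $(1-|y|^2)$; this is where one must verify that the weight $(1-|y|^2)^2$ attaches to the second-order term and disappears from the first-order term, which is precisely what keeps the right-hand side of \eqref{EstofDissipation} finite for honestly hyperbolic gradient fields. Everything else — the conformal form of the metric, the collapse of the two $\lambda$-powers, and the final Cauchy-type estimates — is routine, and the constant $10a^2$ is deliberately crude rather than optimal.
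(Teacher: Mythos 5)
Your proposal is correct, and at the strategic level it mirrors the paper's proof — pass to the Poincar\'e disc coordinates of \eqref{PoincareDisc}, split the covariant derivative of the gradient into a flat second-order piece plus first-order connection pieces, estimate with elementary inequalities, and let the conformal weights cancel against the volume density — but your tactical execution is genuinely different. The paper works with the $(1,1)$-tensor $\overline{\nabla}(\nabla F)$: it writes $\nabla F = \frac{a^2}{4}(1-|Y|^2)^2\sum_k \frac{\partial F}{\partial Y^k}\frac{\partial}{\partial Y^k}$, splits via the product rule into a term $\textbf{I}$ (derivatives hitting the scalar coefficients, which therefore mixes first- and second-order contributions) and a term $\textbf{II}$ ($\overline{\nabla}$ hitting the coordinate frame), and then uses the orthonormality of $\dd Y^j\otimes \frac{\partial}{\partial Y^k}=e_j^*\otimes e_k$ together with the precomputed norm \eqref{simpleCalculation} and crude Cauchy--Schwarz bounds. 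You instead work with the $(0,2)$-tensor, the covariant Hessian $H_{ij}=\partial_i\partial_j\tilde F-\Gamma^k_{ij}\partial_k\tilde F$, insert the conformal formula for $\Gamma^k_{ij}$ (which indeed reproduces the table \eqref{TABLE}), and use the exact contraction identity $\sum_{i,j}\big(\Gamma^k_{ij}\partial_k\tilde F\big)^2 = 2|\nabla^{\mathbb{R}^2}\phi|^2\,|\nabla^{\mathbb{R}^2}\tilde F|^2$; I have verified this identity: with $S_{ij}=p_iv_j+p_jv_i-\delta_{ij}(p\cdot v)$ one computes $\sum_{i,j}S_{ij}^2 = 2|p|^2|v|^2+2(p\cdot v)^2-4(p\cdot v)^2+2(p\cdot v)^2 = 2|p|^2|v|^2$, where the last term uses $\sum_{i,j}\delta_{ij}^2=2$, so the identity is special to dimension two — exactly the setting here. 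Your route buys sharper constants ($\tfrac{a^2}{2}$ on the Hessian term and $4a^2$ on the gradient term, comfortably below $10a^2$) and a cleaner cancellation of the $(1-|y|^2)$-powers; the paper's route buys independence from the conformal Christoffel machinery, leaning only on the frame computations already recorded in its Appendix \ref{appendixa}. Two points you should make explicit when writing this up: first, the identification $\|\overline{\nabla}(\nabla F)\|_{L^2} = \|\overline{\nabla}\dd F\|_{L^2}$ (the paper's statement concerns the gradient vector field, yours the $1$-form) rests on the standard fact that index-raising is parallel and a pointwise isometry, by metric compatibility of $\overline{\nabla}$; second, the two-dimensionality of the contraction identity should be flagged, since the analogous step fails verbatim in higher dimensions.
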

\begin{proof}
Let $F$ be a smooth function on $\mathbb{H}^2(-a^2)$. By a computation (see \eqref{gradF1}-\eqref{gradF2})
\begin{equation}
\nabla F = \frac{a^2}{4} (1-|Y|^2)^2 \bigg \{ \frac{\partial F}{\partial Y^1} \frac{\partial}{\partial Y^1} + \frac{\partial F}{\partial Y^2} \frac{\partial}{\partial Y^2} \bigg \} .
\end{equation}
Then we can decompose the tensor $\overline{\nabla} (\nabla F)$ into two (smooth) components as follows
\begin{equation}
\overline{\nabla} (\nabla F) =\frac {a^2}{4}( \textbf{I} + \textbf{II}) ,
\end{equation}
where
\begin{equation}
\begin{split}
\textbf{I} & = \sum_{ j,k} \frac{\partial }{\partial Y^j} \bigg \{ (1-|Y|^2)^2 \frac{\partial F }{\partial Y^k} \bigg \} dY^j\otimes \frac{\partial }{\partial Y^k} =\sum_{ j,k} \frac{\partial }{\partial Y^j} \bigg \{ (1-|Y|^2)^2 \frac{\partial F }{\partial Y^k} \bigg \} e_j^*\otimes e_k , \\
\textbf{II} & = (1-|Y|^2)^2 \sum_{1\leq j \leq 2} \frac{\partial F }{\partial Y^j} \overline{\nabla} \frac{\partial }{\partial Y^j} .
\end{split}
\end{equation}
Now, since $\{e_j^* \otimes e_k : 1 \leq j,k \leq 2\}$ is an orthonormal frame, with respect to the induced metric $\overline{g}(\cdot ,\cdot )$ on the tensor bundle $T^*(\mathbb{H}^2(-a^2)) \otimes T(\mathbb{H}^2(-a^2))$, it follows that (using $(a+b)^2\leq 2(a^2+b^2)$ and $\abs{Y^j}\leq 1$)
\begin{equation}\label{EstI}
\begin{split}
|\textbf{I}|^2 & =  \sum_{j,k} \bigg |  \frac{\partial }{\partial Y^j} \bigg \{ (1-|Y|^2)^2 \frac{\partial F }{\partial Y^k} \bigg \}  \bigg |^2 \\
& \leq \sum_{j,k} 2 \bigg \{ 16 \big (1-|Y|^2 \big )^2 \big |\frac{\partial F}{\partial Y^{k}} \big |^2 + \big ( 1-|Y|^2 \big )^4 \abs{ \frac{\partial }{\partial Y^{j}} \big ( \frac{\partial F}{\partial Y^{k}} \big ) }^2  \bigg \} \\
& \leq 64 \big (1-|Y|^2 \big )^2 \bigg \{  \sum_{ k } \big |\frac{\partial F}{\partial Y^{k}} \big |^2 +  \big ( 1-|Y|^2 \big )^2 \sum_{ j,k} \abs{ \frac{\partial }{\partial Y^{j}} \big ( \frac{\partial F}{\partial Y^{k}} \big ) }^2   \bigg \} .
\end{split}
\end{equation}
Next, by \eqref{simpleCalculation}
\begin{equation}\label{ESTII}
|\textbf{II}|^2  \leq \bigg \{ 2\cdot 2^{\frac{1}{2}} |Y| \big ( 1-|Y|^2 \big ) \sum_{k }\big |\frac{\partial F}{\partial Y^{k}} \big |  \bigg \}^2 \leq 16 \big ( 1-|Y|^2 \big )^2 \sum_{k }\big |\frac{\partial F}{\partial Y^{k}} \big |^2 .
\end{equation}
By combining \eqref{EstI} with \eqref{ESTII}, we yield
\begin{equation}
\begin{split}
|\overline{\nabla} \nabla F|^2 & \leq \frac{a^4}{8} \big ( |\textbf{I}|^2 + |\textbf{II}|^2  \big ) \\
& \leq 10a^4 \big ( 1-|Y|^2 \big )^2 \bigg \{ \sum_{ k }\big |\frac{\partial F}{\partial Y^{k}} \big |^2 +  \big ( 1-|Y|^2 \big )^2   \sum_{j, k }\big |\frac{\partial}{\partial Y^j} \big ( \frac{\partial F}{\partial Y^{k}} \big ) \big |^2 \bigg \},
\end{split}
\end{equation}
from which it follows at once that the following estimate holds, as needed.
\begin{equation*}
\begin{split}
&\int_{\mathbb{H}^2(-a^2)}|\overline{\nabla} \nabla F|^2 \Vol_{\mathbb{H}^2(-a^2)}  = \int_{\mathbb{H}^2(-a^2)}|\overline{\nabla} \nabla F|^2 \cdot
\frac{4}{a^2 \big ( 1 - |Y|^2 \big )^2} \dd Y^1 \wedge \dd Y^2 \\
&\qquad\qquad \leq  10a^2\int_{D_O(1)} \bigg \{ |\nabla^{\mathbb{R}^2}(F\circ Y^{-1})|^2
+ (1-|y|^2)^2 |\nabla^{\mathbb{R}^2} \nabla^{\mathbb{R}^2} (F\circ Y^{-1})|^2   \bigg \} \dd y_1 \dd y_2 .
\end{split}
\end{equation*}
\end{proof}

By combining Lemma \ref{EstDissLEMMA} with Lemma \ref{UsefulLEMMAONTHEDISC} , we immediately get the following important lemma.

\begin{lemma}[Finite energy of $\dd F\in \mathbb{F}$ implies finite dissipation]\label{IMFinitedisspation}
Let $F$ be a harmonic function on $\mathbb{H}^2(-a^2)$ which satisfies the condition that
\begin{equation}
\|\dd F\|_{L^2(\mathbb{H}^2(-a^2))} =  \|\nabla F\|_{L^2(\mathbb{H}^2(-a^2))} < \infty .
\end{equation}
Then, it follows that there exists some absolute constant $C_a > 0$, such that
\begin{equation}\label{WellONE}
\int_{\mathbb{H}^2(-a^2)}|\overline{\nabla} \nabla F|^2 \Vol_{\mathbb{H}^2(-a^2)} \leq C_a \|d F\|_{L^2(\mathbb{H}^2(-a^2))}^2 .
\end{equation}
\end{lemma}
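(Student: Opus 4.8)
The plan is to use Lemma~\ref{EstDissLEMMA} to transfer the entire estimate to the Euclidean unit disc and then exploit that, in two dimensions, harmonicity is conformally invariant. Writing $f = F\circ Y^{-1}$ for the representative of $F$ in the Poincar\'e disc coordinates, the first thing I would record is that $F$ is harmonic on $\H$ if and only if $f$ is harmonic on $D_O(1)$, since the Laplace--Beltrami operator of the conformally flat metric differs from the Euclidean Laplacian only by a positive conformal factor. Equally important, the Dirichlet energy is a conformal invariant in dimension two, so that
\[
\norm{\grad F}_{L^2(\H)}^2 = \int_{D_O(1)} |\nabla^{\R^2} f|^2 \dd y_1 \dd y_2 ,
\]
which shows the hypothesis $\norm{\dd F}_{L^2(\H)} = \norm{\grad F}_{L^2(\H)} < \infty$ is exactly the statement that $f$ has finite Euclidean Dirichlet energy on the disc. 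Hence the right-hand side of the estimate in Lemma~\ref{EstDissLEMMA} is finite, and its first term is already controlled by $10a^2 \norm{\grad F}_{L^2(\H)}^2$.

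The remaining task is to dominate the weighted second-derivative term $\int_{D_O(1)} (1-|y|^2)^2 \, |\nabla^{\R^2}\nabla^{\R^2} f|^2 \dd y_1\dd y_2$ by the Dirichlet energy $\int_{D_O(1)} |\nabla^{\R^2} f|^2 \dd y_1\dd y_2$; this is the content I would isolate as Lemma~\ref{UsefulLEMMAONTHEDISC}, and it is the only genuinely nontrivial point. Here I would pass to complex notation: since $f$ is harmonic, the Wirtinger derivative $h = \partial_z f$ is holomorphic on the disc, and a direct computation gives $|\nabla^{\R^2} f|^2 = 4|h|^2$ together with $|\nabla^{\R^2}\nabla^{\R^2} f|^2 = 8|h'|^2$ (using $f_{yy} = -f_{xx}$). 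The estimate then reduces to the purely complex-analytic inequality
\[
\int_{D_O(1)} (1-|z|^2)^2 \, |h'(z)|^2 \dd A(z) \leq C \int_{D_O(1)} |h(z)|^2 \dd A(z)
\]
for holomorphic $h$, which I would prove by expanding $h(z) = \sum_n a_n z^n$ and computing both sides against the monomial basis: the weighted integral of $|z|^{2m}$ against $(1-|z|^2)^2$ is an explicit Beta-function value, and comparing coefficients reduces the claim to the elementary bound $\tfrac{2n}{n+2} \le 2$.

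Assembling these pieces, Lemma~\ref{EstDissLEMMA} combined with the weighted estimate yields
\[
\norm{\nablab(\grad F)}_{L^2(\H)}^2 \leq 10a^2 \int_{D_O(1)} \big\{ |\nabla^{\R^2} f|^2 + (1-|y|^2)^2 |\nabla^{\R^2}\nabla^{\R^2} f|^2 \big\} \dd y_1\dd y_2 \leq C a^2 \int_{D_O(1)} |\nabla^{\R^2} f|^2 \dd y_1\dd y_2 ,
\]
and since the last integral equals $\norm{\grad F}_{L^2(\H)}^2 = \norm{\dd F}_{L^2(\H)}^2$, the claimed bound with $C_a$ of size $O(a^2)$ follows. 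I expect the main obstacle to be precisely this weighted second-derivative estimate on the disc: everything else is conformal bookkeeping, but controlling the full Hessian of a harmonic function in a weighted $L^2$ norm by its Dirichlet energy genuinely uses holomorphicity, and the degenerate weight $(1-|y|^2)^2$ vanishing at the boundary is exactly what makes the inequality hold even though a finite-energy harmonic function need not have $L^2$ second derivatives up to the boundary.
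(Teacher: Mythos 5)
Your proposal is correct and follows essentially the same route as the paper: Lemma \ref{EstDissLEMMA} plus the conformal identities \eqref{LaplaceManifold} and \eqref{writtenonDisc} reduce everything to a weighted $L^2$ estimate for the derivative of a holomorphic function on the unit disc, which is then proved by power-series expansion exactly as in Lemmas \ref{OURWAY} and \ref{UsefulLEMMAONTHEDISC}. The only cosmetic differences are that you use the Wirtinger derivative $\partial_z f$ (automatically holomorphic for $f$ harmonic) rather than constructing the harmonic conjugate, and you keep the weight $(1-|z|^2)^2$ instead of passing to $(1-|z|)^2$; both choices are equivalent up to absolute constants and your coefficient computation ($2n/(n+2)\le 2$) checks out.
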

\begin{proof}
The proof is straightforward. Indeed, by combining inequalities \eqref{EstofDissipation} and \eqref{FinalGoryPride}, we get
\begin{equation*}
\begin{split}
\norm{\overline{\nabla}\nabla F}^2_{L^2(\H)}
& \leq  10a^2\int_{D_O(1)} \bigg \{ |\nabla^{\mathbb{R}^2}(F\circ Y^{-1})|^2
+ (1-|y|^2)^2 |\nabla^{\mathbb{R}^2} \nabla^{\mathbb{R}^2} (F\circ Y^{-1})|^2   \bigg \} \dd y_1 \dd y_2 \\
& \leq 20a^2(1+2\tilde C_0) \int_{D_O(1)} \big | \nabla^{\mathbb{R}^2} (F\circ Y^{-1})\big |^2 \\
& = 20a^2 \big \|\dd F  \big \|_{L^2(\mathbb{H}^2(-a^2))}^2 ,
\end{split}
\end{equation*}
where the last equality follows from identity \eqref{writtenonDisc}.\end{proof}

Next, we give the following easy fact about the space $\mathbb{V} = \textbf{V}\oplus \mathbb{F}$. Recall that the spaces $\textbf{V}$ and $\mathbb{F}$ are defined as follows
\begin{itemize}
\item $\textbf{V} = \overline{\Lambda_{c , \sigma }^1(\mathbb{H}^2(-a^2))}^{H^1}$ with $\Lambda_{c,\sigma}^1(\mathbb{H}^2(-a^2))$ to be the space of smooth $d^*$-closed $1$-forms with compact support on $\mathbb{H}^2(-a^2)$.
\item $\mathbb{F} = \{ \dd F \in L^2(\mathbb{H}^2(-a^2)) : \triangle F = 0 \}$.
\end{itemize}

\begin{lemma}\label{Anothertrivial}
The relation $\mathbb{V} \subset H^1_0(\mathbb{H}^2(-a^2))$ holds.
\end{lemma}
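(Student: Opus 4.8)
The plan is to treat the two summands of $\V = \tV \oplus \F$ separately, since $H^1_0(\H)$ is a linear subspace of $H^1(\H)$ and it therefore suffices to place each summand inside it. The inclusion $\tV \subset H^1_0(\H)$ is immediate from the definitions: one has $\Lambda_{c,\sigma}^1(\H) \subset \Lambda_c^1(\H)$, so passing to $H^1$-closures yields $\tV = \overline{\Lambda_{c,\sigma}^1(\H)}^{H^1} \subset \overline{\Lambda_c^1(\H)}^{H^1} = H^1_0(\H)$. All the substance thus lies in showing $\F \subset H^1_0(\H)$.

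Fix $\dd F \in \F$, so that $F$ is harmonic (hence smooth, and $\dd F$ is a smooth $1$-form) with $\norm{\dd F}_{L^2(\H)} < \infty$. By Lemma \ref{IMFinitedisspation} the finite energy of $\dd F$ forces finite dissipation, i.e.\ $\nablab \dd F \in L^2(\H)$, so in particular $\dd F \in H^1(\H)$. It then remains to approximate $\dd F$ in the $H^1$-norm by smooth, compactly supported $1$-forms. To this end I would introduce a family of smooth radial cutoffs $\chi_R = \chi_R(\rho)$, where $\rho$ is the distance to a fixed base point, with $\chi_R \equiv 1$ on the geodesic ball $B_R$, $\chi_R \equiv 0$ outside $B_{R+1}$, and $\abs{\dd \chi_R} \leq C$ uniformly in $R$ (possible since $\abs{\dd\rho}=1$). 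Then each $\chi_R \dd F$ lies in $\Lambda_c^1(\H)$, and the claim reduces to proving $\chi_R \dd F \to \dd F$ in $H^1$ as $R \to \infty$.

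For the $L^2$-part this is dominated convergence using $\dd F \in L^2(\H)$. For the homogeneous part I would use that $\Def$ is the symmetrization of $\nablab$, so $\abs{\Def w} \leq \abs{\nablab w}$ pointwise and hence $\norm{\Def(\chi_R\dd F - \dd F)}_{L^2} \leq \norm{\nablab(\chi_R\dd F - \dd F)}_{L^2}$. Writing $\nablab(\chi_R \dd F) = \chi_R \nablab\dd F + \dd\chi_R \otimes \dd F$, the term $(\chi_R - 1)\nablab \dd F$ tends to $0$ in $L^2$ by dominated convergence (since $\nablab \dd F \in L^2$), while the cutoff error satisfies
\[
\norm{\dd\chi_R \otimes \dd F}_{L^2(\H)}^2 \leq C^2 \int_{B_{R+1}\setminus B_R} \abs{\dd F}^2 \,\Vol_{\H}.
\]
The crucial point, and the only place where the geometry of $\H$ could have caused trouble, is that this last integral tends to $0$ not because $\abs{\dd\chi_R}$ is small (the exponential volume growth of $\H$ forbids the usual $1/R$ gain), but simply because it is the tail over the receding annulus $B_{R+1}\setminus B_R$ of the convergent integral $\int_\H \abs{\dd F}^2 \,\Vol_{\H} < \infty$. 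This is exactly what lets the crude, uniformly-bounded-gradient cutoff succeed in spite of the exponential volume growth.

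Combining the two parts gives $\chi_R \dd F \to \dd F$ in $H^1$, so $\dd F \in \overline{\Lambda_c^1(\H)}^{H^1} = H^1_0(\H)$, establishing $\F \subset H^1_0(\H)$. Together with $\tV \subset H^1_0(\H)$ this yields $\V = \tV \oplus \F \subset H^1_0(\H)$, as desired. I expect the sole obstacle to be the control of the cutoff error against the exponential volume growth, which is resolved by the tail argument above; the remaining estimates are routine applications of dominated convergence.
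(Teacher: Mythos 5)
Your proof is correct and follows essentially the same route as the paper: reduce to showing $\F \subset H^1_0(\H)$ (the inclusion $\tV\subset H^1_0$ being definitional), invoke Lemma \ref{IMFinitedisspation} to get $\nablab\, \dd F \in L^2(\H)$, and then approximate $\dd F$ in $H^1$ by radial cutoffs, with the error controlled by the global integrability of $|\dd F|^2$ and $|\nablab\, \dd F|^2$. One parenthetical claim of yours is mistaken, however: the exponential volume growth of $\H$ does \emph{not} forbid the usual $1/R$ gain. The paper's own proof uses exactly that gain, taking $\eta_R$ with $\chi_{B_O(R)}\leq \eta_R \leq \chi_{B_O(2R)}$ and $|\nabla \eta_R|\leq 2/R$ (possible because the transition annulus has geodesic width $R$; gradient bounds are governed by distance, not volume), after which the cutoff error is at most $\frac{C}{R^2}\int_{\H}|\dd F|^2 \to 0$ --- volume growth never enters, since one integrates against the globally integrable density $|\dd F|^2$ rather than against the volume of the annulus. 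Your variant (unit-width annulus, uniformly bounded gradient, convergence via the tail of $\int_{\H}|\dd F|^2$) is equally valid; the two cutoff schemes are interchangeable here, so this is a cosmetic difference, not a different method.
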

\begin{proof}
First, it follows directly from the definition of $\textbf{V}$ that $\textbf{V} \subset H^1_0(\mathbb{H}^2(-a^2))$. So, all we need to prove is that $\mathbb{F} \subset H^1_0(\mathbb{H}^2(-a^2))$.

Let $F$ be a harmonic function on $\mathbb{H}^2(-a^2)$ such that $\dd F \in L^2(\mathbb{H}^2(-a^2))$. Then, with respect to an arbitrarily chosen positive number $R > 1$, we consider some radially symmetric (with respect to some preferred point of reference $O$ in $\mathbb{H}^2(-a^2)$) test function  $\eta_R \in C_{c}^{\infty}(\mathbb{H}^2(-a^2))$ which satisfies the following two conditions
\begin{itemize}
\item $\chi_{B_O(R)} \leq \eta_R \leq \chi_{B_O(2R)}$. Here, $B_O(r)$ is the geodesic ball with radius $r$ centered at $O$.
\item $|\nabla \eta_R| \leq \frac{2}{R}$.
\end{itemize}
 Since
\begin{align*}
\overline{\nabla}{fu}=\dd F\otimes u + f\overline{\nabla}u,\quad
\overline{\nabla}^Tfu=u\otimes \dd F + f\overline{\nabla}^Tu,
\end{align*}
and $\Def=\frac 12(\overline\nabla+\overline\nabla^T)$, we have
\begin{equation}\label{productrule}
\Def (\eta_R \dd F ) = \frac{1}{2} \big \{ \dd \eta_R \otimes \dd F + \dd F \otimes \dd \eta_R \big \} + \eta_R \Def (\dd F).
\end{equation}
Using $(a+b+c)^2\leq 3(a^2+b^2+c^2)$, we then carry out the following pointwise estimate, in which we temporarily use the abbreviation $|\theta |_{\overline{g}} = \overline{g}(\theta , \theta )^{\frac{1}{2}}$, with $\theta$ to be a smooth section of $T^*(\mathbb{H}^2(-a^2))\otimes T^*(\mathbb{H}^2(-a^2))$
\begin{equation*}
\begin{split}
|\Def \big ( \eta_R \dd F - \dd F    \big )|_{\overline{g}}^2 & \leq 3 \bigg \{ \frac{1}{4} |\dd \eta_R \otimes \dd F|_{\overline{g}}^2  +  \frac{1}{4} |\dd F \otimes \dd \eta_R |_{\overline{g}}^2   + (1-\eta_R)^2 |\Def (\dd F )|_{\overline{g}}^2                    \bigg \} \\
& \leq 3\bigg \{  \frac{2}{R^2} |\dd F|^2 + \chi_{\mathbb{H}^2(-a^2)- B_O(R)} |\Def (\dd F )|_{\overline{g}}^2 \bigg \} .\label{defest1}
\end{split}
\end{equation*}
Then
\begin{equation*}
\begin{split}
\|\eta_R \dd F - \dd F\|_{H^1(\mathbb{H}^2(-a^2))} & = \int_{\mathbb{H}^2(-a^2)} |\eta_R \dd F - \dd F|^2 \Vol_{\mathbb{H}^2(-a^2)} \\
&\quad+ 2 \int_{\mathbb{H}^2(-a^2)} |\Def \big ( \eta_R \dd F - \dd F \big ) |_{\overline{g}}^2 \Vol_{\mathbb{H}^2(-a^2)} \\
& \leq \int_{\mathbb{H}^2(-a^2)- B_O(R)} |\dd F|^2 \Vol_{\mathbb{H}^2(-a^2)} + \frac{12}{R^2} \int_{\mathbb{H}^2(-a^2)} |\dd F|^2 \Vol_{\mathbb{H}^2(-a^2)} \\ &\quad + 6 \int_{\mathbb{H}^2(-a^2)-B_O(R)} |\Def (\dd F )|_{\overline{g}}^2\Vol_{\H} .
\end{split}
\end{equation*}
Since by Lemma \ref{IMFinitedisspation}, $\dd F \in \mathbb{F}$ implies $\dd F \in H^1(\mathbb{H}^2(-a^2))$, it follows that each term on the right-hand side of the above estimate will tend to $0$ as $R\rightarrow \infty$. So, in particular, we know that the sequence $\{\eta_{m} \dd F\}_{m=1}^{\infty} \subset \Lambda_{c}^1(\mathbb{H}^2(-a^2))$ will converge to $\dd F$ in the $H^1$-norm. This shows that $\mathbb{F} \subset H^1_0(\mathbb{H}^2(-a^2))$.
\end{proof}

 We can now deduce the following

\begin{lemma}\label{SILEMMA}
For any harmonic function $F$ on $\mathbb{H}^2(-a^2)$ with $\dd F \in L^2(\mathbb{H}^2(-a^2))$, it follows that $\Def \dd F \in L^2(\mathbb{H}^2(-a^2))$ and that the following identity holds.
\begin{equation}\label{SIMPLEIMPORTANT}
((\dd F , \dd F )) = 2a^2 \|\dd F\|_{L^2(\mathbb{H}^2(-a^2))} .
\end{equation}
Here, we recall that for a $1$-form $u \in H^1(\mathbb{H}^2(-a^2))$, $((u,u))^{\frac{1}{2}}$ is the homogeneous part of the $H^1$-norm of $u$ as specified in \eqref{homogeneouspart}of Definition \ref{H1def}.
\end{lemma}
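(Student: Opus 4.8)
The plan is to recognize $\dd F$ as an element of $H^1_0(\H)$ and then specialize the master identity \eqref{strongestderivative} of Lemma \ref{TrivialLemma} to $u = \dd F$, in which the two homogeneous contributions $\norm{\dd\dd F}_{L^2}$ and $\norm{\dd^*\dd F}_{L^2}$ both collapse precisely because $F$ is harmonic. Once the membership $\dd F \in H^1_0(\H)$ is secured, the identity \eqref{SIMPLEIMPORTANT} becomes a one-line specialization, so the real content is organizing the prerequisites rather than a lengthy computation.

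First I would establish that $\Def \dd F \in L^2(\H)$. Since $F$ is smooth and harmonic with $\dd F = \nabla F \in L^2(\H)$, Lemma \ref{IMFinitedisspation} furnishes $\overline{\nabla}\nabla F \in L^2(\H)$. Because $\Def \dd F$ is by definition the symmetrization of the full tensor $\overline{\nabla}\dd F$, and since the Levi-Civita connection is metric-compatible so that $\overline{\nabla}\dd F$ and $\overline{\nabla}\nabla F$ have equal pointwise norm, symmetrization being norm-nonincreasing yields the pointwise bound $\abs{\Def \dd F} \leq \abs{\overline{\nabla}\nabla F}$. Hence $\Def \dd F \in L^2(\H)$, which is the first assertion of the lemma and also shows $\dd F \in H^1(\H)$. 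Combining this with Lemma \ref{Anothertrivial}, which places all of $\mathbb{F}$ inside $H^1_0(\H)$, gives $\dd F \in H^1_0(\H)$, so that Lemma \ref{TrivialLemma} legitimately applies to $u = \dd F$.

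I would then read off \eqref{strongestderivative} with $u = \dd F$. The contribution $\dd u = \dd \dd F$ vanishes identically since $\dd^2 = 0$, so $\norm{\dd\dd F}_{L^2} = 0$. For the remaining term, the weak $\dd^* \dd F$ of Definition \ref{DeffordSTAR} is zero: for every $\phi \in C_c^\infty(\H)$, a Green-type integration by parts (legitimate as $\phi$ has compact support and $F$ is smooth) gives $\int_{\H} g(\dd F, \dd\phi)\,\Vol_{\H} = \int_{\H} (\dd^*\dd F)\phi\,\Vol_{\H} = 0$, the last equality holding because $\dd^*\dd F$ is, up to sign, the Laplacian of the harmonic function $F$ and hence vanishes. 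Feeding $\norm{\dd\dd F}_{L^2} = 0$ and $\norm{\dd^*\dd F}_{L^2} = 0$ into \eqref{strongestderivative} leaves only the zeroth-order term, yielding $((\dd F, \dd F)) = 2a^2\norm{\dd F}_{L^2(\H)}^2$, which is \eqref{SIMPLEIMPORTANT}.

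The main obstacle is not this final specialization but guaranteeing $\dd F \in H^1_0(\H)$, i.e.\ that $\dd F$ is approximable in the \emph{full} $H^1$-norm by compactly supported smooth forms; this is exactly where Lemmas \ref{IMFinitedisspation} and \ref{Anothertrivial} are indispensable. In particular the finite-dissipation estimate of Lemma \ref{IMFinitedisspation} cannot be bypassed, since without $\Def \dd F \in L^2(\H)$ neither the quantity $((\dd F, \dd F))$ nor the identity \eqref{strongestderivative} would be meaningful for $\dd F$ in the first place.
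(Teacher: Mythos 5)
Your proposal is correct and follows essentially the same route as the paper's own proof: invoke Lemma \ref{Anothertrivial} to place $\dd F$ in $H^1_0(\mathbb{H}^2(-a^2))$, apply the identity \eqref{strongestderivative} of Lemma \ref{TrivialLemma} to $u = \dd F$, and observe that harmonicity kills both $\dd\dd F$ and $\dd^*\dd F$, leaving only the term $2a^2\|\dd F\|_{L^2}^2$. Your extra preliminary step deducing $\Def\dd F \in L^2$ directly from Lemma \ref{IMFinitedisspation} via the symmetrization bound is a sound (and slightly more explicit) packaging of what the paper obtains implicitly through the chain of Lemmas \ref{IMFinitedisspation} and \ref{Anothertrivial}, but it is not a different argument.
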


\begin{proof}
Let $F$ be harmonic on $\mathbb{H}^2(-a^2)$ and $\dd F \in L^2(\mathbb{H}^2(-a^2))$.

From Lemma \ref{Anothertrivial}, $\mathbb{F} \subset H^1_0(\mathbb{H}^2(-a^2))$.  Hence $\dd F \in H^1_0(\mathbb{H}^2(-a^2))$, and we can  invoke Lemma \ref{TrivialLemma} to deduce
\begin{equation}\label{easyeasyeasy}
((\dd F , \dd F )) = 2 \|\dd^* \dd F \|_{L^2(\mathbb{H}^2(-a^2))}^2 + \|\dd \dd F \|_{L^2(\mathbb{H}^2(-a^2))}^2 + 2a^2 \|\dd F\|_{L^2(\mathbb{H}^2(-a^2))}^2,
\end{equation}
and \eqref{SIMPLEIMPORTANT} follows since $\dd\dd F = 0$ and $\dd^*\dd F = (-\triangle )F = 0 $.
\end{proof}

We are now ready to prove the following simple lemma, which uses elementary techniques from elliptic theory.  We include the proof only for completeness.

\begin{lemma}\label{Gradientspacecomplete}
The vector space $\mathbb{F}$ as defined in \eqref{GD} is a Banach space with respect to the Sobolev norm $\|\cdot \|_{H^1(\mathbb{H}^2(-a^2))}$.
\end{lemma}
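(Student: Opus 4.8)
The plan is to show that $\mathbb{F}$ is complete as a normed space under $\|\cdot\|_{H^1(\mathbb{H}^2(-a^2))}$, which amounts to showing that $\mathbb{F}$ is a closed subspace of the Hilbert space $H^1_0(\mathbb{H}^2(-a^2))$. By Lemma \ref{Anothertrivial} we already know $\mathbb{F} \subset H^1_0(\mathbb{H}^2(-a^2))$, and since $H^1_0(\mathbb{H}^2(-a^2))$ is complete, it suffices to prove that $\mathbb{F}$ is closed. So I would take an arbitrary Cauchy sequence $\{\dd F_m\}_{m=1}^{\infty}$ in $\mathbb{F}$ with respect to the $H^1$-norm, and aim to produce a harmonic function $F_\infty$ on $\mathbb{H}^2(-a^2)$ with $\dd F_\infty \in L^2$ such that $\dd F_m \to \dd F_\infty$ in $H^1$.

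First I would extract the limit. Since $\{\dd F_m\}$ is Cauchy in $H^1_0$ and $H^1_0$ is complete, there is a limiting $1$-form $\alpha \in H^1_0(\mathbb{H}^2(-a^2))$ with $\|\dd F_m - \alpha\|_{H^1} \to 0$; in particular $\dd F_m \to \alpha$ in $L^2$. The two things that must be checked are (i) that $\alpha$ is itself of the form $\dd F_\infty$ for some function $F_\infty$, and (ii) that $F_\infty$ is harmonic. For (i), I would use that $\dd$ is a closed operator in the weak sense (Definition \ref{Defford}): since $\dd(\dd F_m) = 0$ for every $m$ and $\dd F_m \to \alpha$ in $L^2$, passing to the limit in the defining weak relation for $\dd$ gives $\dd \alpha = 0$ in the weak sense. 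Because $\mathbb{H}^2(-a^2)$ is simply connected, a weakly closed $L^2$ $1$-form is weakly exact, so $\alpha = \dd F_\infty$ for some $F_\infty$ with $\dd F_\infty \in L^2$. For (ii), I would pass to the limit in $\dd^* \dd F_m = -\triangle F_m = 0$: the weak $\dd^*$ operator is also closed (Definition \ref{DeffordSTAR}), and $L^2$-convergence $\dd F_m \to \dd F_\infty$ forces $\dd^* \dd F_\infty = 0$ weakly, i.e. $\triangle F_\infty = 0$, so that elliptic regularity promotes $F_\infty$ to a genuine smooth harmonic function. This places $\dd F_\infty \in \mathbb{F}$ and identifies $\alpha = \dd F_\infty$, completing the argument.

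The cleanest way to organize steps (i) and (ii) together is to invoke Lemma \ref{SILEMMA}, which gives the exact identity $((\dd F, \dd F)) = 2a^2\|\dd F\|_{L^2}^2$ for every harmonic $F$ with $\dd F \in L^2$. Applied to differences, this says $\|\dd F_k - \dd F_l\|_{H^1}^2 = (1 + 2a^2)\|\dd F_k - \dd F_l\|_{L^2}^2$, so the $H^1$-norm and the $L^2$-norm are equivalent on $\mathbb{F}$. Hence Cauchy in $H^1$ is the same as Cauchy in $L^2$, and it is enough to identify the $L^2$-limit $\alpha$ as an element of $\mathbb{F}$ and then observe that the same norm equivalence (now applied to $\dd F_\infty$) upgrades the $L^2$-convergence $\dd F_m \to \dd F_\infty$ to $H^1$-convergence automatically. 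This reduces the whole problem to the single assertion that $\mathbb{F}$ is $L^2$-closed.

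The main obstacle is therefore the $L^2$-closedness of $\mathbb{F}$, i.e. verifying that the $L^2$-limit of a sequence of $L^2$-harmonic gradients is again an $L^2$-harmonic gradient. I expect the essential input to be standard interior elliptic estimates: working in the Poincar\'e disc coordinates $Y$ of \eqref{PoincareDisc}, the functions $F_m$ (normalized suitably, e.g. fixing $F_m(O)$) satisfy a uniformly elliptic equation on compact subsets of $D_O(1)$, and the $L^2$ bound on $\dd F_m$ gives, via Caccioppoli and interior $H^2$ (indeed $C^\infty$) estimates, local uniform convergence of $F_m$ and its derivatives to a harmonic $F_\infty$ on every compact subset. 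Matching this local limit with the global $L^2$-limit $\alpha$ of $\dd F_m$ then yields $\alpha = \dd F_\infty$ with $F_\infty$ harmonic, as required. Everything else in the proof is soft functional analysis once this closedness is in hand.
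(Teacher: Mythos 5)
Your proposal is correct, but it mixes two routes, and the primary one (your first paragraph) is genuinely different from the paper's. The paper argues concretely: it transplants the problem to the Euclidean disc via the coordinate $Y$ of \eqref{PoincareDisc}, so that each $f_m = F_m\circ Y^{-1}$ is Euclidean-harmonic and the Dirichlet energies agree by \eqref{writtenonDisc}; it normalizes by subtracting means, combines the Poincar\'e inequality with the mean value property to get locally uniform convergence to a harmonic limit $f^*$, matches $\nabla^{\mathbb{R}^2}f^*$ with the $L^2$-limit of the gradients, pulls back to $\H$, and finishes---exactly as you propose---by using Lemma \ref{SILEMMA} to convert $L^2$-convergence of $\dd F_m - \dd F^*$ into $H^1$-convergence. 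Your abstract route instead obtains the limit $\alpha$ from completeness of $H^1_0$, passes to the limit in the weak identities defining $\dd$ and $\dd^*$ (legitimate: these are pairings against fixed compactly supported test forms, so $L^2$-convergence suffices), and then identifies $\alpha \in \F$ using the distributional Poincar\'e lemma on the simply connected $\H$ plus Weyl's lemma. This is shorter and coordinate-free, but it relies on two nontrivial black boxes---exactness of closed currents on a simply connected manifold (de Rham theory, which the paper only invokes later, for the pressure) and Weyl's lemma---whereas the paper's computation is elementary and self-contained, in keeping with its goal of minimizing geometric prerequisites. A streamlining of your argument: since you obtain both $\dd\alpha = 0$ and $\dd^*\alpha = 0$ weakly, you can apply Weyl's lemma first to conclude that $\alpha$ is a smooth harmonic $1$-form, and then invoke only the classical (smooth) Poincar\'e lemma to write $\alpha = \dd F_\infty$ with $F_\infty$ smooth and harmonic; this removes the need for exactness at the level of currents. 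Your last paragraph, on the other hand, is essentially the paper's own proof (interior estimates for harmonic functions on the disc in place of the mean-value/Poincar\'e combination), so either of your two routes, carried out in full, establishes the lemma.
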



\begin{proof}
It suffices to show $\F$ is a closed vector space in $H^1(\mathbb{H}^2(-a^2))$.
Consider a sequence $\{\dd F_m\}_{m=1}^{\infty}$ in $\mathbb{F}$, with each $F_{m}$ to be a harmonic function on $\mathbb{H}^2(-a^2)$, such that $\{\dd F_m\}_{m=1}^{\infty}$ is Cauchy with respect to $\|\cdot \|_{H^1(\mathbb{H}^2(-a^2))}$, so

\begin{equation}\label{goal1}
\lim_{m,n \rightarrow \infty} \bigg \{ \|\dd F_m - \dd F_n\|_{L^2(\mathbb{H}^2(-a^2))}^2 + (( \dd F_m - \dd F_n , \dd F_m - \dd F_n )) \bigg \} = 0 .
\end{equation}
Our task is to prove that there exists some harmonic function $F^*$ on $\mathbb{H}^2(-a^2)$ which satisfies $\dd F^* \in H^1(\mathbb{H}^2(-a^2))$, and for which $\{\dd F_m\}_{m=1}^{\infty}$ converges to $\dd F^*$ with respect to the norm $\|\cdot \|_{H^1(\mathbb{H}^2(-a^2))}$.
To this end, we consider, for each $m \in \mathbb{Z}^+$, the function $f_m \in C^{\infty}(D_0(1))$ defined by
\begin{equation}
f_m = F_m \circ Y^{-1} ,
\end{equation}
where $Y^{-1} : D_0(1) \rightarrow \mathbb{H}^2(-a^2)$ is the smooth inverse of the coordinate system $Y$ on $\mathbb{H}^2(-a^2)$, which we introduce in \eqref{PoincareDisc}. Since  $F_m$ is harmonic on $\mathbb{H}^2(-a^2)$, we have (see \eqref{LaplaceManifold})
\[
\triangle_{\mathbb{R}^2}f_m = 0 \quad\mbox{on the Euclidean disk}\quad D_0(1),\quad \forall m \in \mathbb{Z}^+.
\]
Now, consider the sequence $\{\overline{f}_m\}$ of harmonic functions defined by
\begin{equation}
\overline{f}_m = f_m - \frac{1}{|D_0(1)|} \int_{D_0(1)} f_m(y) \dd y .
\end{equation}
Then, by the Poincare's inequality and \eqref{writtenonDisc}
\begin{equation}\label{easypoincareest}
\|\overline{f}_m - \overline{f}_n\|_{L^2(D_0(1))} \leq C_0 \| \nabla^{\mathbb{R}^2} ( f_m - f_n  )   \|_{L^2(D_0(1))}
= C_0 \| \dd f_m - \dd f_n \|_{L^2(\mathbb{H}^2(-a^2))} .
\end{equation}
Now, since $\overline{f}_m - \overline{f}_n$ is a harmonic function on the Euclidean disc $D_0(1)$, for any $r \in (0,1)$, we can apply the mean-value formula to deduce that the following pointwise estimate holds for any point $y \in D_0(r) = \{y : |y| < r\}$.

\begin{equation}\label{mvp}
\begin{split}
|\overline{f}_m(y) - \overline{f}_n(y)| & \leq \frac{1}{\pi (1-r)^2}  \int_{|z-y| < 1-r}  |\overline{f}_m(z) - \overline{f}_n(z)| dz \\
& \leq \frac{1}{\pi^{\frac{1}{2}}(1-r)} \|\overline{f}_m - \overline{f}_n\|_{L^2(D_0(1))} .
\end{split}
\end{equation}

Combining \eqref{mvp} with \eqref{easypoincareest}, we have
\begin{equation}\label{locallyunformly}
\|\overline{f}_m - \overline{f}_n\|_{L^{\infty}(D_0(r))} \leq \frac{C_0}{(1-r)} \| \dd f_m - \dd f_n \|_{L^2(\mathbb{H}^2(-a^2))},\quad \forall m,n \in \mathbb{Z}^+, \quad r \in (0,1).
\end{equation}
Then estimate \eqref{locallyunformly} ensures that the sequence $\{\overline{f}_m \}_{m=1}^{\infty}$ converges locally uniformly to a limiting harmonic function
$f^*$ on $D_0(1)$ (see for example \cite[Theorem 2.8, p. 21]{GilbargTrudinger}).

Recall that we have
\begin{equation}
\lim_{m,n\rightarrow \infty }\|\nabla^{\mathbb{R}^2}\overline{f}_m -\nabla^{\mathbb{R}^2}\overline{f}_n \|_{L^2(D_0(1))} =
\lim_{m,n\rightarrow \infty }\|\dd F_m - \dd F_n\|_{L^2(\mathbb{H}^2(-a^2))}^2 = 0.
\end{equation}
Hence, there must exists some $\mathbb{R}^2$-valued function $w^* \in L^{2}(D_0(1))$ to which the sequence $\{ \nabla_{\mathbb{R}^2}\overline{f}_m \}_{m=1}^{\infty}$ converges in $L^2(D_0(1))$. That is, we have
\begin{equation}
\lim_{m\rightarrow \infty} \|\nabla^{\mathbb{R}^2}\overline{f}_m  - w^* \|_{L^{2}(D_0(1))} = 0.
\end{equation}
Since by  \eqref{easypoincareest} $\{\overline{f}_m\}_{m=1}^{\infty}$ also converges to the limiting function $f^*$ in $L^2(D_0(1))$, it follows that $\nabla^{\mathbb{R}^2}f^* = w^* \in L^2(D_0(1))$. Hence, the function $F^* \in C^{\infty}(\mathbb{H}^2(-a^2))$ defined by $F^* = f^*\circ Y$ satisfies the following properties:
\begin{itemize}
\item $F^*$ is a harmonic function on $\mathbb{H}^2(-a^2)$.
\item $\dd F^* \in L^2(\mathbb{H}^2(-a^2))$ since $\|\dd F^*\|_{L^2(\mathbb{H}^2(-a^2))} = \|\nabla^{\mathbb{R}^2}f^*  \|_{L^2(D_0(1))} < \infty $.
\item $\|\dd F_m - \dd F^*\|_{L^2(\mathbb{H}^2(-a^2))} = \|\nabla^{\mathbb{R}^2}f_m-\nabla^{\mathbb{R}^2}f^*\|$ converges to $0$ as $m\rightarrow \infty$.
\end{itemize}
Finally by Lemma \ref{SILEMMA}, we deduce that
\begin{equation}
\lim_{m\rightarrow \infty }(( \dd F_m - \dd F^* , \dd F_m - \dd F^* )) =2 a^2 \lim_{m\rightarrow \infty}\|\dd F_m - \dd F^*\|_{L^2(\mathbb{H}^2(-a^2))}^2 = 0.
\end{equation}
Hence, \eqref{goal1} holds as desired.
\end{proof}

\begin{remark}
Actually, the proof of Theorem \ref{Gradientspacecomplete} shows, as a by-product, that $\mathbb{F}$ is also a closed subspace in $L^2(\mathbb{H}^2(-a^2))$ with respect to the norm $\|\cdot \|_{L^2}$.
\end{remark}

\begin{lemma}\label{orthogonal}
The vector subspaces $\textbf{V}$ and $\mathbb{F}$ are orthogonal to each other in $H^1_0(\mathbb{H}^2(-a^2))$ with respect to the inner product $[\cdot, \cdot ]$.
\end{lemma}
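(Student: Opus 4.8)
The plan is to establish orthogonality first on the dense subclass $\Lambda_{c,\sigma}^1(\H)$ and then pass to the limit. Since $\tV = \overline{\Lambda_{c,\sigma}^1(\H)}^{H^1}$ and the inner product $[\cdot,\cdot]$ is continuous with respect to the $H^1$-norm (by Cauchy--Schwarz), it suffices to prove $[\theta, \dd F] = 0$ for every $\theta \in \Lambda_{c,\sigma}^1(\H)$ and every $\dd F \in \F$. Note that both forms lie in $H^1_0(\H)$: the former by definition, the latter by Lemma \ref{Anothertrivial}. So all the weak derivatives below exist in $L^2(\H)$ and the identity \eqref{strongestderivative} of Lemma \ref{TrivialLemma} applies to each of them.

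Next I would split $[\theta, \dd F] = (\theta, \dd F) + ((\theta, \dd F))$ and reduce the whole computation to the single $L^2$ pairing $(\theta,\dd F)$. Polarizing \eqref{strongestderivative} gives
\[
((\theta, \dd F)) = 2(\dd^*\theta, \dd^*\dd F) + (\dd\theta, \dd\dd F) + 2a^2(\theta, \dd F),
\]
where the first summand vanishes because $\dd^*\theta = 0$ and the second vanishes because $\dd\dd F = 0$, leaving only $((\theta,\dd F)) = 2a^2(\theta,\dd F)$. (Equivalently, one may integrate by parts via \eqref{abstractcharacter} and the pointwise identity \eqref{defnDefstarDef}, using $\dd\dd F = 0$ together with $\dd^*\dd F = -\triangle F = 0$ to obtain $\Def^*\Def\,\dd F = a^2\,\dd F$, whence the same conclusion.) Consequently
\[
[\theta, \dd F] = (\theta, \dd F) + 2a^2(\theta, \dd F) = (1 + 2a^2)\,(\theta, \dd F),
\]
so that the entire problem collapses to verifying $(\theta, \dd F) = 0$.

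Finally I would compute $(\theta, \dd F) = \int_{\H} g(\theta, \dd F)\,\Vol_{\H}$ by integration by parts. Since $\theta$ is smooth with compact support and $F$ is smooth, the adjoint relation defining $\dd^*$ yields
\[
\int_{\H} g(\theta, \dd F)\,\Vol_{\H} = \int_{\H} (\dd^*\theta)\,F\,\Vol_{\H} = 0,
\]
the last equality holding because $\theta$ is $\dd^*$-closed. The point deserving the most care is the justification of this integration by parts: $F$ itself is not assumed to be $L^2$ (only $\dd F$ is), but the compact support of $\theta$, hence of $\dd^*\theta$, confines the pairing to a compact set on which $F$ is smooth and bounded, so the step is legitimate. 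Combining the three paragraphs gives $[\theta, \dd F] = 0$ for all $\theta \in \Lambda_{c,\sigma}^1(\H)$, and passing to the $H^1$-limit in the first slot establishes that $\tV$ and $\F$ are orthogonal in $H^1_0(\H)$ with respect to $[\cdot,\cdot]$.
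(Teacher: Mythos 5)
Your proof is correct and follows essentially the same route as the paper: reduce by density to $\theta \in \Lambda_{c,\sigma}^1(\H)$, use the identity $2\Def^*\Def \dd F = 2a^2\,\dd F$ (equivalently, your polarization of \eqref{strongestderivative}) to get $[\theta,\dd F] = (1+2a^2)(\theta,\dd F)$, and then kill $(\theta,\dd F)$ by integrating by parts against the compactly supported $\dd^*$-closed form $\theta$. The paper phrases the middle step as a direct integration by parts via \eqref{abstractcharacter} and \eqref{defnDefstarDef} rather than via polarization, but this is only a cosmetic difference, and your remark justifying the final integration by parts (compact support of $\theta$ compensating for $F \notin L^2$) matches the paper's use of \eqref{ip4}.
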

\begin{proof}
Recall that $\Lambda_{c,\sigma}^1(\mathbb{H}^2(-a^2))$ is the space of all smooth $1$-forms $\theta$ with compact support in $\mathbb{H}^2(-a^2)$ which satisfy $\dd^*\theta  = 0$, and
\begin{equation*}\label{closure}
\textbf{V} = \overline{\Lambda_{c,\sigma}^1(\mathbb{H}^2(-a^2))}^{\|\cdot \|_{H^1}} .
\end{equation*}
To begin, we consider some element $v \in \Lambda_{c,\sigma}^1(\mathbb{H}^2(-a^2))$, and some $\dd F \in \mathbb{F}$, with $F$ to be harmonic on $\mathbb{H}^2(-a^2)$. Then by \eqref{defnDefstarDef} and \eqref{ip4}
\begin{equation}
\begin{split}
((v, \dd F )) & = 2 \int_{\mathbb{H}^2(-a^2)} \bar g ( \Def v , \Def \dd F  ) \\
& = \int_{\mathbb{H}^2(-a^2)} g (v, 2 \Def^* \Def \dd F ) \\
& = 2a^2 \int_{\mathbb{H}^2(-a^2)} g (v , \dd F) \\
& = -2a^2 \int_{\mathbb{H}^2(-a^2)} \dd^* (F v) = 0,
\end{split}
\end{equation}
where the last equality follows, since $Fv$ is a smooth $1$-form with compact support on $\mathbb{H}^2(-a^2)$.

This shows that $$[v,\dd F] = (v,\dd F)+((v, \dd F))=0$$  for any $v \in \Lambda_{c,\sigma}^1(\mathbb{H}^2(-a^2))$, and any $\dd F \in \mathbb{F}$.

To deal with the general case of $v \in \textbf{V}$, we choose a sequence $\{v_k\}_{k=1}^{\infty} \subset \Lambda_{c,\sigma}^1(\mathbb{H}^2(-a^2))$ which converges to $v$ with respect to the Sobolev norm $\|\cdot\|_{H^1(\mathbb{H}^2(-a^2))}$. Then
\begin{equation}
\big | [v, \dd F ]\big | = \big | [v-v_k , \dd F ]\big | \leq \|v - v_k \|_{H^1(\mathbb{H}^2(-a^2))}\cdot \|\dd F\|_{H^1(\mathbb{H}^2(-a^2))} .
\end{equation}
So, by passing to the limit as $k\rightarrow \infty$, the above estimate gives $[v,\dd F] = 0$, as desired.
\end{proof}

\section{Operators $A$ and $B$}\label{AB} Following \cite{Temam}, we rewrite $((u,v))$  in terms of an operator
 \[
 A: \V \rightarrow \V',
 \]
with $A$ given by
\be\label{Adefn}
A(u)(v)=\ip{Au,v}_{\V'\otimes  \V}:=((u,v)), \quad v \in \V.
\ee
We have this simple lemma.
\begin{lemma}[Estimates on $A$]\label{AONE}
 \begin{align}
\norm{ Au}_{L^2(0,T;\V')}&\leq \norm{ u}_{L^2(0,T;\V)},\label{A1}\\
\norm{ Au}_{L^2(0,T;H^{-1})}&\leq \norm{ u}_{L^2(0,T;\V)}.\label{A2}
 \end{align}
\end{lemma}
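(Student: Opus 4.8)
The plan is to prove both estimates by first establishing the pointwise-in-time bounds $\norm{Au(t)}_{\V'} \leq \norm{u(t)}_{\V}$ and $\norm{Au(t)}_{H^{-1}} \leq \norm{u(t)}_{\V}$, and then squaring and integrating over $(0,T)$. The entire content is a single application of the Cauchy--Schwarz inequality for the positive semidefinite symmetric bilinear form $((\cdot,\cdot))$ on $H^1_0(\H)$; accordingly this is a routine computation rather than a lemma with a genuine obstacle.

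First I would recall that, since $\V = \tV\oplus\F$ is a subspace of $H^1_0(\H)$, its norm is the restriction of the $H^1$-norm, so that $\norm{v}_\V = [v,v]^{\frac 12}$ with $[v,v]=(v,v)+((v,v))$ as in \eqref{H1norm}. Because $((\cdot,\cdot)) = 2\int_{\H} \overline g(\Def\cdot,\Def\cdot)\Vol_{\H}$ is a symmetric, positive semidefinite bilinear form (being a scaled $L^2$ inner product of deformation tensors), the Cauchy--Schwarz inequality gives, for any $u, v\in\V$,
\[
\abs{((u,v))} \leq ((u,u))^{\frac 12}\,((v,v))^{\frac 12}.
\]
Combining this with the elementary comparison $((v,v)) \leq (v,v)+((v,v)) = \norm{v}_{H^1}^2$ yields $\abs{((u,v))} \leq ((u,u))^{\frac 12}\norm{v}_\V$, and the same comparison applied to $u$ gives $((u,u))^{\frac 12}\leq \norm{u}_\V$.

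From the definition \eqref{Adefn} of $A$, the dual norm of $Au(t)\in\V'$ is $\norm{Au(t)}_{\V'} = \sup\set{\abs{((u(t),v))} : v\in\V,\ \norm{v}_\V\leq 1}$, so the chain of inequalities above gives the pointwise bound $\norm{Au(t)}_{\V'}\leq ((u(t),u(t)))^{\frac 12}\leq \norm{u(t)}_\V$ for almost every $t$. Squaring and integrating over $(0,T)$ produces \eqref{A1}. For \eqref{A2}, I would observe that the functional $v\mapsto ((u(t),v))$ is defined and bounded for \emph{every} $v\in H^1_0(\H)$, not merely for $v\in\V$, since $\Def v\in L^2(\H)$ whenever $v\in H^1_0(\H)$; hence it represents an element of $H^{-1}(\H)=(H^1_0)'$, and exactly the same Cauchy--Schwarz estimate, now with the supremum taken over $\norm{v}_{H^1}\leq 1$ as $v$ ranges over all of $H^1_0(\H)$, gives $\norm{Au(t)}_{H^{-1}}\leq \norm{u(t)}_\V$. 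Integrating in time yields \eqref{A2}.

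There is no serious difficulty here. The only two points requiring a moment's care are, first, verifying that $((\cdot,\cdot))$ is a legitimate positive semidefinite inner product so that Cauchy--Schwarz applies, which is immediate from its definition as a squared $L^2$-norm of $\Def$; and second, making the interpretation of $Au$ as an element of $H^{-1}$ precise, i.e.\ checking that $v\mapsto((u,v))$ extends to a bounded functional on all of $H^1_0(\H)$ rather than only on the subspace $\V$. This distinction is exactly what separates \eqref{A1} (testing against $v\in\V$) from \eqref{A2} (testing against $v\in H^1_0$), and in both cases the same comparison $((v,v))\leq\norm{v}_{H^1}^2$ closes the estimate.
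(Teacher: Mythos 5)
Your proof is correct and follows essentially the same route as the paper's: both reduce to the Cauchy--Schwarz inequality for the form $((\cdot,\cdot))$ together with the comparison $((v,v))\leq \norm{v}_{H^1}^2$, then square and integrate in time. Your explicit remark that $v\mapsto ((u(t),v))$ extends boundedly to all of $H^1_0(\H)$ is exactly what the paper leaves implicit when it says the proof of \eqref{A2} is identical.
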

\begin{proof}
For \eqref{A1}, by definition and Cauchy-Schwarz
\begin{align*}
\norm{ Au}^2_{L^2(0,T;\V')}&=\int_0^T\norm{Au(t)}_{\V'}^2\dd t\\
&=\int_0^T\sup_{\norm{v}_\V=1}\abs{((u(t),v))}^2\dd t\\
&\leq \int_0^T\sup_{\norm{v}_\V=1} \norm{\Def u(t)}^2_{L^2} \norm{\Def v}^2_{L^2}\dd t\\
&\leq \int_0^T \norm{\Def u(t)}^2_{L^2} \dd t\\
&\leq \norm{ u}^2_{L^2(0,T;\V)}.
\end{align*}
Proof of  \eqref{A2} is identical.

\end{proof}
Next we introduce the notation for the nonlinear term.  Let
\[
b(u,v,w)=\int_{\H}  g(\nablab_u v, w)\Vol_{\H}
\]
and
\[
B: \V \rightarrow \V'
\]
so that
 $B(u) \in \V'$ and is defined by
\be\label{Bdefn}
B(u)(v)=\ip{Bu,v}_{\V'\otimes  \V}:=b(u,u,v),\quad v \in \V.
\ee

We now have $\H$ version of \cite[Lemma 3.4 p.198]{Temam} together with some other properties.  The fact that we are considering the space
$\V = \textbf{V} \oplus \mathbb{F}$ instead of just $\textbf{V}$ requires some additional care.
\begin{lemma}[Properties of $b$ and $B$]  \label{ATWO} We have
\begin{align}
b(u,v,v)&=0  \quad \mbox{if} \quad u \in \V, v \in H^1_0,\label{bp2}\\
b(u,v,w)&=-b(u,w,v) \quad \mbox{if} \quad u \in \V, v, w \in H^1_0,  \label{bp1}\\
 \abs{b(u,v,w)}&\leq C\norm{u}^\frac 12_{2}\norm{u}^\frac 12_{H^1}\norm{v}_{\dot H^1}\norm{w}^\frac 12_{2}\norm{w}^\frac 12_{H^1}, \label{b1}
 \end{align}
 and
 \begin{align}
\norm{ Bu}_{L^2(0,T;\V')}&\leq C\norm{u}_{L^\infty(0,T;H)}\norm{ u}_{L^2(0,T;\V)}, \label{b2}\\
\norm{ Bu}_{L^2(0,T;H^{-1})}&\leq C\norm{u}_{L^\infty(0,T;H)}\norm{ u}_{L^2(0,T;\V)},\label{b4}\\
\norm{ Bu}_{L^1(0,T;\V')}&\leq C\norm{ u}^{2}_{L^2(0,T;\V)}.\label{b3}
 \end{align}

\end{lemma}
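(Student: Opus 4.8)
The plan is to prove the identities and trilinear/operator estimates in the standard Navier--Stokes order, using integration by parts for the algebraic identities and the Ladyzhenskaya inequality (Lemma~\ref{LadyZhenskaya}) for the estimates, but paying attention to the fact that $\V = \textbf{V}\oplus\mathbb{F}$ rather than just $\textbf{V}$.

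First I would establish the antisymmetry. The key point is that $b(u,v,w)=-b(u,w,v)$ follows from the divergence-free condition on $u$ together with integration by parts: formally,
\[
b(u,v,w)+b(u,w,v)=\int_{\H} g(\nablab_u v,w)+g(\nablab_u w,v)\,\Vol_{\H}
= \int_{\H} \overline{\nabla}_u\big(g(v,w)\big)\,\Vol_{\H},
\]
since the connection is metric-compatible. Writing $\overline{\nabla}_u\big(g(v,w)\big) = \dd\big(g(v,w)\big)(u)$ and integrating against the divergence-free $u$, this equals $-\int_{\H} g(v,w)\,\dd^\ast u^\flat = 0$ when $\dd^\ast u=0$. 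The subtlety is that for general $u\in\V$ this must be justified by approximation: elements of $\textbf{V}$ are $H^1$-limits of compactly supported $\dd^\ast$-closed forms, while elements of $\mathbb{F}=\dd F$ satisfy $\dd^\ast \dd F=-\triangle F=0$ weakly, so $\dd^\ast u=0$ holds throughout $\V$. I would first verify the identity on smooth compactly supported representatives, then pass to the limit using that $v,w\in H^1_0$ and the trilinear estimate \eqref{b1} controls the error. The special case \eqref{bp2} is then immediate by setting $w=v$.

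Next I would prove the pointwise trilinear estimate \eqref{b1}. Here I would bound $|b(u,v,w)|\le \int_{\H} |u|\,|\overline{\nabla}v|\,|w|\,\Vol_{\H}$ and apply H\"older with exponents $4,2,4$ to get $\le \|u\|_4\|\overline{\nabla}v\|_2\|w\|_4$. Recognizing $\|\overline{\nabla}v\|_2 \lesssim \|v\|_{\dot H^1}$ (via the equivalence of norms in Corollary~\ref{equivH1}), I would then insert the Ladyzhenskaya inequality \eqref{ldh} for each of $\|u\|_4$ and $\|w\|_4$, which produces exactly the factors $\|u\|_2^{1/2}\|u\|_{H^1}^{1/2}$ and $\|w\|_2^{1/2}\|w\|_{H^1}^{1/2}$ with an absolute constant absorbing $\sqrt{2C}$. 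This is the structurally most important step, since all three operator bounds descend from it.

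Finally, the three operator estimates \eqref{b2}, \eqref{b4}, \eqref{b3} follow by specializing \eqref{b1} to $v=u$ and taking suprema over $\|w\|_\V=1$ (equivalently $\|w\|_{H^1}=1$). For \eqref{b2} I would write $\|Bu(t)\|_{\V'}\le C\|u(t)\|_2^{1/2}\|u(t)\|_{H^1}^{1/2}\cdot\|u(t)\|_{\dot H^1}\cdot 1 \le C\|u(t)\|_2^{1/2}\|u(t)\|_{H^1}^{3/2}$, bound the $L^2$-in-$t$ norm by pulling out $\|u\|_{L^\infty(0,T;H)}^{1/2}\cdot\|u\|_{L^\infty(0,T;H)}$ suitably — more precisely $\|u\|_2 \le \|u\|_{L^\infty_t L^2}$ and $\|u\|_{H^1}^{3/2}$ integrated against $t$ gives a power that matches $L^2(0,T;\V)$ after one more application of H\"older in time. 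Estimate \eqref{b4} is identical with $H^{-1}$ replacing $\V'$ (the test space is only larger, so the sup is no bigger). Estimate \eqref{b3} is the cheapest: taking the $L^1$-in-$t$ norm of $\|Bu(t)\|_{\V'}\le C\|u(t)\|_{H^1}^2$ (crudely bounding $\|u\|_2\le\|u\|_{H^1}$) gives $C\|u\|_{L^2(0,T;\V)}^2$ directly. \emph{The main obstacle} I anticipate is not any single estimate but the careful bookkeeping of the time exponents in \eqref{b2}--\eqref{b4} so that the mixed $L^\infty_tH \cap L^2_t\V$ norms come out exactly as stated, together with the approximation argument needed to extend the antisymmetry identity \eqref{bp1} from smooth compactly supported forms to all of $\V$, which is where the $\textbf{V}\oplus\mathbb{F}$ structure genuinely must be respected.
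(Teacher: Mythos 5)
Your treatment of \eqref{bp2}, \eqref{bp1}, \eqref{b1}, and \eqref{b3} matches the paper's argument in substance (the paper proves \eqref{bp2} first from the divergence identity \eqref{ip0} and obtains \eqref{bp1} by polarization, whereas you polarize first; both rest on metric compatibility, $\dd^\ast u = 0$ on all of $\V$, and approximation through $H^1_0$ — just be careful that you invoke \eqref{b1} to justify the limiting argument before you prove it, which is harmless only because \eqref{b1} is independent of the identities). However, your derivation of \eqref{b2} and \eqref{b4} has a genuine gap. You estimate $Bu$ by placing $u$ in the first two slots of \eqref{b1}, which gives
\begin{equation*}
\|Bu(t)\|_{\V'} \le C\,\norm{u(t)}_{2}^{\frac 12}\,\norm{u(t)}_{H^1}^{\frac 32},
\qquad\text{hence}\qquad
\norm{Bu}_{L^2(0,T;\V')}^2 \le C^2 \int_0^T \norm{u(t)}_{2}\,\norm{u(t)}_{H^1}^{3}\,\dd t .
\end{equation*}
This cannot be bounded by $\norm{u}^2_{L^\infty(0,T;H)}\norm{u}^2_{L^2(0,T;\V)}$: after pulling out $\norm{u}_{L^\infty(0,T;L^2)}$ you are left with $\int_0^T \norm{u(t)}_{H^1}^3\,\dd t$, an $L^3_t H^1$ quantity, and no application of H\"older in time converts $L^2_t H^1 \cap L^\infty_t L^2$ control into $L^3_t H^1$ control — the $L^\infty_t$ information is only on the weaker $L^2$ norm, and since $\norm{u}_{H^1} \ge \norm{u}_2$ the pointwise bound you would need, $\norm{u(t)}_{H^1} \lesssim \norm{u}_{L^\infty_t L^2}$, is false (concentrate $u$ on a short time interval with small $L^2$ but unit $H^1$ norm). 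The exponents $(\tfrac 12, \tfrac 32)$ simply do not close.

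The missing idea — and the key step in the paper's proof — is to apply the antisymmetry \eqref{bp1} \emph{before} estimating: for $\norm{v}_{\V} = 1$,
\begin{equation*}
\abs{b(u,u,v)} = \abs{b(u,v,u)} \le \norm{u}_4^2\,\norm{\nablab v}_2 \le \norm{u}_4^2 \le C\,\norm{u}_2\,\norm{u}_{H^1},
\end{equation*}
so that the derivative lands on the test function $v$ (whose $H^1$ norm is $1$) and \emph{both} Ladyzhenskaya factors land on $u$, producing the balanced exponents $(1,1)$. Then
$\int_0^T \norm{u(t)}_2^2 \norm{u(t)}_{H^1}^2\,\dd t \le \norm{u}^2_{L^\infty(0,T;H)} \norm{u}^2_{L^2(0,T;\V)}$
gives \eqref{b2}, and identically \eqref{b4}. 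Your \eqref{b3} survives without this trick because there the crude bound $\norm{u}_2 \le \norm{u}_{H^1}$ yields $\norm{u(t)}_{H^1}^2$, which is exactly integrable in time; but for the two $L^2$-in-time estimates the antisymmetry step is indispensable, not optional bookkeeping.
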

\begin{proof}  If $u\in \V$ and $v\in H^1_0$, then $\dd^\ast  u=0$ and by \eqref{ip0},
\[
b(u,v,v)=\int_{\H}\left(-\frac 12\dd^\ast  (\abs{v}^2u)+ \frac 12\abs{v}^2 \dd^\ast u \right)\Vol_{\H}=0,
\]
where the first term is $0$ because $v \in H^1_0$, so it is a limit of elements in $\Lambda^1_{c}(\H)$.  This shows \eqref{bp2}.

  \eqref{bp1} follows from \eqref{bp2} and bilinearity by replacing $v$ by $v+w\in H^1_0$ since
  \[
  0=b(u, v+w, v+w)=b(u,v,v)+b(u,v,w)+b(u,w,v)+b(u,w,w),
  \]
  and the first term and the last are $0$ by \eqref{bp2}.

To show \eqref{b1}, we use \eqref{csg} and H\"older
\[
 \abs{b(u,v,w)}\leq \int_M \abs{u}\abs{\nablab v}\abs{w}\leq \norm{u}_4\norm{w}_4\norm{\nablab v}_2
\]
and the result follows by \eqref{ldh}.  Next
\be\label{bp3}
\norm{ Bu}^2_{L^2(0,T;\V')}=\int_0^T\sup_{\norm{v}_\V=1}\abs{b\big(u(t),u(t),v\big)}^2\dd t,
\ee
so for $v\in \V$ with $\norm{v}_\V=1$ consider
 \be\label{bp35}
\abs{b(u,u,v)}=\abs{-b(u,v,u)} \leq \norm{u}^2_4\norm{\nablab v}_2\leq  \norm{u}^2_4
 \ee
by  \eqref{csg} and H\"older.  Hence by \eqref{ldh}
\be\label{bp4}
\abs{b(u,u,v)}\leq  C \norm{u}_2\norm{u}_{H^1}.
\ee
Then \eqref{bp3} and \eqref{bp4} imply

\[
\norm{ Bu}^2_{L^2(0,T;\V')}\leq C^2 \int_0^T \norm{u(t)}^2_2\norm{u(t)}^2_{H^1}\dd t,
\]
which immediately gives \eqref{b2}.  The proof of \eqref{b4} is identical.
 Finally, for \eqref{b3} we have using \eqref{bp4}
\begin{align*}
\norm{ Bu}_{L^1(0,T;\V')}=\int_0^T\sup_{\norm{v}_\V=1}\abs{b\big(u(t),u(t),v\big)}\dd t\lesssim \int_0^T  \norm{u(t)}_2\norm{u(t)}_{H^1}\dd t \lesssim\int_0^T \norm{u(t)}^{2}_{H^1} \dd t
\end{align*}
as needed.  \end{proof}

\section{Existence}\label{exist}

In the weak formulation of Leary-Hopf solutions to the Navier-Stokes equation, we consider
\begin{equation}
\mathbb{V} = \textbf{V} \oplus \F\subset H^1_0(\H) ,
\end{equation}
with $\textbf{V}$ and $\mathbb{F}$ as defined in Section \ref{summary}. Recall by Lemma \ref{orthogonal}, $\tV$ and $\F$ are orthogonal with respect to the following inner product on $\V$
\begin{equation}
[ \cdot , \cdot  ] = (\cdot , \cdot )+ (( \cdot , \cdot  )) .
\end{equation}

Next, we need a sequence $\{w_k\}_{k=1}^{\infty}$ lying in $\Lambda_{c,\sigma}^1(\mathbb{H}^2(-a^2)),$
which can serve as a basis of $\textbf{V}$. $H^1_0(\mathbb{H}^2(-a^2))$ is separable, so $\textbf{V}$, as a closed subspace of $H^1_0(\mathbb{H}^2(-a^2))$ must always have a basis $\{e_k\}_{k=1}^{\infty}$. The question here is how to find a basis of elements for $\textbf{V}$ which lie entirely  in the dense subspace $\Lambda^1_{c,\sigma}(\H)$. Indeed, an affirmative answer to this question follows directly from elementary Lemma \ref{peaceofmindlemma} from functional analysis, which we show in the appendix.

Thanks to Lemma \ref{peaceofmindlemma}, we can choose a sequence $\{e_k\}_{k=1}^{\infty}$ in $\Lambda_{c,\sigma}^1(\mathbb{H}^2(-a^2))$ which is a basis of $\textbf{V}$. On the other hand, $\mathbb{F}$ is an infinite-dimensional separable Banach space with respect to the Sobolev norm $\|\cdot \|_{H^1}$. So, we can also find a basis $\{\dd F_{k}\}_{k=1}^{\infty}$ of $\mathbb{F}$. Now, we consider the sequence of elements $\{w_k\}_{k=1}^{\infty}$ in $\mathbb{V}$ which is defined through the following rules:

\begin{itemize}
\item $w_{2k-1} = e_k$, for any $k \in \mathbb{Z}^+$.
\item $w_{2k} = \dd F_k$, for any $k \in \mathbb{Z}^+$.
\end{itemize}

Then, by Lemma \ref{orthogonal}, $\{w_k\}_{k=1}^{\infty}$ becomes a basis for $\mathbb{V} = \textbf{V} \oplus \mathbb{F}$. So, if we let $\textbf{E}_m = \Span \{w_1, w_2 , .... ,w_m\}$, then
\begin{equation}\label{totalclosure}
\overline{\bigcup_{m\in \mathbb{Z}^+} \textbf{E}_m}^{H^1} = \mathbb{V} .
\end{equation}

We now follow the main steps of the Faedo-Galerkin method as given in \cite[pp. 192-196]{Temam}.  Including space $\F$ requires additional work even in the setup above.  Also see subcase 2 in Step 6 below.\\

\noindent{\bf Step 1: Selection of the finite energy initial data.}\\
Recall
\begin{equation*}
\textbf{H} = \overline{\Lambda_{c,\sigma}^1(\mathbb{H}^2(-a^2))}^{L^2}.
\end{equation*}
Let $u_{0}\in \textbf{H}\oplus \mathbb{F}$ be the initial data.   Since by definition $\textbf{V} \subset \textbf{H}$, the vector subspace $\textbf{E}_m$ of $\textbf{V}\oplus \mathbb{F}$ can be viewed as a vector subspace of $\textbf{H}\oplus \mathbb{F}$.   Now define $$u_{0m} \in \textbf{E}_m$$ to be the orthogonal projection of $u_0$ onto $\textbf{E}_m$ with respect to the inner product $(\cdot ,\cdot ) $.\\

\noindent{\bf Step 2: Constructing approximate solutions in subspaces $\textbf{E}_m$. }\\

Let $T > 0$ be given. We seek a $C^1$-function $u_m : [0,T] \rightarrow \textbf{E}_{m}$ in the following form
\begin{equation}\label{theformofUm}
u_m(t) = \sum_{j=1}^{m} g_{jm}(t) w_j ,
\end{equation}
with $u_m(0) = u_{0m}$, and which also satisfies the following equation for $j \in \{1,2,...,m\}$
\begin{equation}\label{finiteApproxeq}
(u_m'(t), w_j) +  (( u_m(t) , w_j )) + b(u_m(t) , u_m(t), w_j) = 0 .
\end{equation}
Notice that for each $1\leq j \leq m$, equation \eqref{finiteApproxeq} can be expressed in the following equivalent form.
\begin{equation}\label{finiteApproxTWO}
\sum_{k=1}^{m} (w_k , w_j)g_{km}'(t) + \sum_{k=1}^{m} ((w_k ,w_j )) g_{km}(t) + \sum_{1\leq k,l \leq m} b(w_k,w_l, w_j) g_{km}(t) g_{lm}(t) = 0 .
\end{equation}
The linear independence of the list $\{w_1 , w_2 , ... w_m\}$ ensures that the $m\times  m$-matrix with $(w_i,w_j)$ as its $(i,j)$-entry, is invertible. So, let $(\alpha^{ij} )$ be the inverse matrix. By multiplying   \eqref{finiteApproxTWO} by $\alpha^{ji}$, and then summing with respect to $j$, we yield the following system of equations for $1\leq i \leq m$
\begin{equation}\label{systemofODE}
g_{im}'(t) + \sum_{1\leq j,k \leq m} g_{km}(t) ((w_k , w_j))\alpha^{ji} + \sum_{1\leq j,k,l \leq m} b(w_k,w_l,w_j)g_{km}(t)g_{lm}(t)\alpha^{ji} = 0.
\end{equation}

The basic existence and uniqueness theorem of ODE ensures the existence of time $T_m>0$ and a $C^1$-solution $g_{im}$ on $[0,T_m)$.  Then we define $u_m : [0,T_m) \rightarrow \textbf{E}_m$, by \eqref{theformofUm}, and now $u_m$ is a solution to the system \eqref{systemofODE} and satisfies $u_m(0) = u_{0m}$. Since the system \eqref{systemofODE} is equivalent to the system \eqref{finiteApproxeq}, $u_m$ is also a solution to  \eqref{finiteApproxeq} with the initial data $u_m(0) = u_{0m}$.

Following the reasoning in \cite{Temam}, we show the time of existence $T_m$ of the solution $u_m$ can actually coincide with $T$ through obtaining a uniform bound for the magnitude of the function $\big ( \sum_{j=1}^{m} |g_{im}|^2 \big )^{\frac{1}{2}}$ over $[0,T_m)$.

Indeed, the required uniform estimate for $\big ( \sum_{j=1}^{m} |g_{im}|^2 \big )^{\frac{1}{2}}$ over $[0,T_m)$ is obtained as a by-product of the following a priori estimate for $\|u_m(t)\|_{L^2}$.\\

\noindent{\bf Step 3: a priori estimate for the energy of $u_m$.}\\

As in \cite{Temam}, we deduce from the system \eqref{finiteApproxeq} the following relation
\begin{equation}
(u_m'(t) , u_m(t))  +  (( u_m(t) , u_m(t) )) + b(u_m(t) , u_m(t) , u_m(t) ) = 0,\quad t \in [0,T_m).
\end{equation}
which using \eqref{bp2} further reduces down to
\begin{equation}\label{relation2}
\frac{\dd}{\dd t} \|u_m(t)\|_{L^2}^2 +  2(( u_m(t) , u_m(t) )) = 0.
\end{equation}
By integrating \eqref{relation2} over $[0,t]$  we deduce  \begin{equation}\label{energym}
\sup_{t \in [0,T_m)} \|u_m(t)\|_{L^2}^2 + 2\int^{T_m}_0  (( u_m(s) , u_m(s) )) \dd s = \|u_{0m}\|_{L^2}^2 \leq \|u_0\|_{L^2}^2,
\end{equation}
where the last inequality follows from the fact that $u_{0m} \in \textbf{E}_m$ is the orthogonal projection of $u_0$ onto $\textbf{E}_m$.\\

Now, consider the \emph{new} norm $|\cdot |_{\textbf{E}_m}$ on the $m$-dimensional vector space $\textbf{E}_m$, which is defined by the following rule, for any element $v = \sum_{j=1}^{m} \lambda_j w_j$, with $\lambda_j \in \mathbb{R}$
\begin{equation}
| v |_{\textbf{E}_m} = \{ \sum_{j=1}^{m} (\lambda_j)^2 \}^{\frac{1}{2}} .
\end{equation}
Since all norms on a finite dimensional vector space are equivalent, there exists some constant $\Lambda_m > 1$ such that  \begin{equation}\label{compare}
  | v |_{\textbf{E}_m} \leq \Lambda_m  \|v \|_{L^2} .
\end{equation}
Hence from \eqref{compare} and \eqref{energym}
\begin{equation}
\sup_{t \in [0,T_m)} \big \{ \sum_{j=1}^{m} |g_{jm}|^2 \big \}^{\frac{1}{2}} = \sup_{t \in [0,T_m)} |u_m(t)|_{\textbf{E}_m} \leq \Lambda_m \|u_0\|_{L^2},
\end{equation}
and the standard ODE theory implies $T_m$ can be extended to be as large as the prescribed $T > 0$.

 \noindent{\bf Step 4: Showing $u_{m}$ is uniformly bounded in $H^{\gamma}(\R; \V, \textbf{H}\oplus \F)$. }\\
 Let $\gamma \in (0,\frac 14)$, and
 \[
 H^{\gamma}(\R, \V, \textbf{H}\oplus \F):=\set{v \in L^2(\R; \V), D^\gamma_tv\in L^2(\R; \textbf{H}\oplus \F)} ,
 \]
 where
 \[
\widehat{ D^\gamma_tv}(\tau)=(2\pi\tau)^\gamma \hat v(\tau).
 \]
 Then
 \[
 \norm{v}_{H^\gamma}^2=\norm{v}^2_{L^2(\R; \V)}+\norm{\abs{\tau}^\gamma \hat v}_{L^2(\R;\textbf{H}\oplus \F)}^2.
 \]
From the definition of the norm of $H^{\gamma}$ and \eqref{energym} we just need to show $D^{\gamma}_{t}u_{m}$ is bounded in $L^{2}(\R; L^{2})$ (note the uniform bound will depend on $T$). To that end, let $\tilde u_{m}(t)=u_{m}(t), t\in [0,T]$, and $0$  otherwise.  This definition potentially creates jump discontinuities that lead to delta functions in the time derivative.  More precisely
\be\label{t'}
\partial_{t} \tilde u_{m}=\partial_{t} u_{m} +u_{m}(0)\delta_{0} -u_{m}(T)\delta_{T}.
\ee
Then the goal is to rewrite \eqref{finiteApproxeq} on the Fourier transform side by replacing $u_{m}$ with $\tilde u_{m}$.  First, by solving for $\partial_t u_m$ in  \eqref{t'} and plugging into \eqref{finiteApproxeq}, and using the definitions of operators $A$ and $B$ we have
\[
(\partial_{t}\tilde u_m(t)- u_{m}(0)\delta_{0} +u_{m}(T)\delta_{T} , w_{j}) + \ip{A \tilde u_{m} , w_{j}}_{\V'\otimes  \V} + \ip{B\tilde u_{m}, w_{j}}_{\V'\otimes  \V} = 0.
\]
After Fourier transform in time, denoted by $\widehat{\cdot}$ , and rearranging, we have
\[
2\pi i \tau ( \widehat{\tilde u_m}(\tau),w_{j})= (u_{m}(0) +u_{m}(T)\exp(-2\pi i T\tau) , w_{j}) - \ip{\widehat{A \tilde u_{m}}(\tau) , w_{j}} - \ip{\widehat{B\tilde u_{m}}(\tau), w_{j}} .
\]
Now, recall $g_{jm}$ is the coefficient function of $u_{m}$ corresponding to the basis $w_{j}$. Let $\tilde g_{jm}(t)=g_{jm}(t)$ for $t\in [0,T]$ and $0$ otherwise, and multiply by $\widehat{\tilde g_{jm}}$ and add to obtain
\begin{align}
2\pi i \tau \norm{ \widehat{\tilde u_m}(\tau)}^2&= (u_{m}(0) +u_{m}(T)\exp(-2\pi i T\tau) , \widehat{\tilde u_{m}}(\tau))\label{hat1}\\
&\quad - \ip{\widehat{A \tilde u_{m}}(\tau) , \widehat{\tilde u_{m}}(\tau)} - \ip{\widehat{B\tilde u_{m}}(\tau), \widehat{\tilde u_{m}}(\tau)}\nonumber\\
&= I + II +III.\nonumber
\end{align}
Next by Cauchy-Schwarz and \eqref{energym}
\be\label{I}
I\leq (\norm{u(0)}_{L^2}+\norm{u(T)}_{L^2})\norm{\widehat {\tilde u_{m}}(\tau)}_{L^2}\leq 2\norm{u_{0}}_{L^2}\norm{\widehat {\tilde u_{m}}(\tau)}_{L^2}.
\ee
Similarly, by Cauchy-Schwarz, definition of the Fourier transform and $Au$, and by \eqref{energym}
\begin{align}
II\leq  \norm{\widehat{ A\tilde u_{m}}(\tau)}_{\V'}\norm{\widehat {\tilde u_{m}}(\tau)}_{\V}&\leq  \int^{\infty}_{-\infty}\norm{A\tilde u_{m}(t)}_{\V'}\dd t\norm{\widehat {\tilde u_{m}}(\tau)}_{\V}\nonumber\\
&=  \int^{T}_{0}\norm{A u_{m}(t)}_{\V'}\dd t\norm{\widehat {\tilde u_{m}}(\tau)}_{\V}\nonumber\\
&\leq  \int^{T}_{0}\norm{\Def u_{m}(t)}_{L^2}\dd t\norm{\widehat {\tilde u_{m}}(\tau)}_{\V}\nonumber\\
&\leq  T^{\frac 12}\int^{T}_{0}\norm{\Def u_{m}(t)}^{2}_{L^2}\dd t\norm{\widehat {\tilde u_{m}}(\tau)}_{\V}\nonumber\\
&\leq  T^{\frac 12}\norm{u_{0}}^{2}_{L^2}\norm{\widehat {\tilde u_{m}}(\tau)}_{\V}.\label{II}
\end{align}
Finally, by Cauchy-Schwarz, definition of Fourier transform, \eqref{b3} and \eqref{energym}
\begin{align}
III&\leq \norm{\widehat{ B\tilde u_{m}}(\tau)}_{\V'}\norm{\widehat {\tilde u_{m}}(\tau)}_{\V}\nonumber\\
&\leq \int^{T}_{0}\norm{B u_{m}(t)}_{\V'}\dd t\norm{\widehat {\tilde u_{m}}(\tau)}_{\V}\nonumber\\
&\leq 2C(1+T)\norm{u_{0}}^{2}_{L^2}\norm{\widehat {\tilde u_{m}}(\tau)}_{\V}.\label{III}
\end{align}
We are now ready to estimate.   We have
\[
\abs{\tau}^{2\gamma}\leq \frac{1+\abs{\tau}}{(1+\abs{\tau})^{1-2\gamma}}.
\]
Then
\begin{align*}
\int^{\infty}_{-\infty}\abs{\tau}^{2\gamma}\norm{\widehat {\tilde u_{m}}(\tau)}_{L^2}^{2}\dd \tau &\leq \int^{\infty}_{-\infty}\frac{1+\abs{\tau}}{(1+\abs{\tau})^{1-2\gamma}}\norm{\widehat {\tilde u_{m}}(\tau)}^{2}_{L^2}\dd\tau\\
&= \int^{\infty}_{-\infty}\frac{1}{(1+\abs{\tau})^{1-2\gamma}}\norm{\widehat {\tilde u_{m}}(\tau)}^{2}_{L^2}\dd\tau+  \int^{\infty}_{-\infty}\frac{\abs{\tau}}{(1+\abs{\tau})^{1-2\gamma}} \norm{\widehat {\tilde u_{m}}(\tau)}^{2}_{L^2} \dd\tau\\
&=A +B.
\end{align*}
We are done if we can show $A$ and $B$ are uniformly bounded.
\[
A\leq \int^{\infty}_{-\infty}\norm{\widehat {\tilde u_{m}}(\tau)}^{2}_{L^2}\dd \tau= \int^{\infty}_{-\infty}\norm{ {\tilde u_{m}}(t)}^{2}_{L^2}\dd t
= \int^{T}_{0}\norm{ u_{m}(t)}^{2}_{L^2}\dd t\leq T\norm{u_{0}}^{2}_{L^2},
\]
by \eqref{energym}.
Next  by \eqref{hat1}, \eqref{I}-\eqref{III}, and by Cauchy-Schwarz and $0<\gamma<\frac 14$ and similar estimates as for term $A$
\begin{align*}
B&\leq C(T, \norm{u_{0}}) \int^{\infty}_{-\infty}\frac{1}{(1+\abs{\tau})^{1-2\gamma}} \norm{\widehat {\tilde u_{m}}(\tau)}_{L^2} d\tau \\
&\leq C(T, \norm{u_{0}},\gamma)\left\{ \int^{T}_{0}\norm{ u_{m}(t)}^{2}_{L^2}\dd t\right\}^{\frac 12}\\
&\leq C( T,\norm{u_{0}},\gamma)
\end{align*}
This completes Step 4.
 
\noindent{\bf Step 5 : Passing to the limit for the linear part of the Navier-Stokes equation.}\\
To begin, we consider a function $\psi \in C^1([0,T])$ which satisfies $\psi (T) =0$, and multiply \eqref{finiteApproxeq} by $\psi (t)$ and take the time integral over $[0,T]$. Then, a simple integration by parts gives
\begin{equation}\label{forpasstothelimiteq}
\begin{split}
-\int_{0}^{T} \big ( u_m(t) , w_j \big ) \psi'(t) \dd t & + \int_{0}^{T} ((  u_m(t) , w_j )) \psi (t) \dd t \\
& + \int_{0}^{T} b(u_m(t) , u_m(t) , w_j) \psi(t) \dd t = \big ( u_{0m} , w_j\big ) \psi (0) .
\end{split}
\end{equation}
Then by \eqref{energym} (and using $L^2$ norm for $\textbf{H}\oplus \F$)
\begin{equation}\label{Uniformfunctionalest}
\begin{split}
\|u_m\|_{L^{\infty}(0,T ; \textbf{H}\oplus \mathbb{F})} & \leq \|u_0\|_{L^2}, \\
\|u_m\|_{L^2(0,T ; \mathbb{V})} & \leq (T+1)^{\frac{1}{2}} \|u_0\|_{L^2} .
\end{split}
\end{equation}
Since both $\textbf{H}\oplus \mathbb{F}$ and $\mathbb{V}$ are reflexive (simply due to the fact that they are both Hilbert spaces), the two uniform estimates in \eqref{Uniformfunctionalest} ensure the existence of some subsequence $u_{m'}$, together with some limiting element $u \in L^{\infty}(0,T ; \textbf{H}\oplus \mathbb{F}) \cap L^2(0,T ; \mathbb{V})$ for which we have the following weak-$\ast$ convergence and weak convergence\\

\begin{itemize}
\item The weak-$\ast$ convergence of the subsequence $u_{m'}$ to $u$ in $L^{\infty}(0,T ; \textbf{H}\oplus \mathbb{F}) $ holds in the sense that: for any $v \in L^1(0,T ; \textbf{H}\oplus \mathbb{F})$, we have $\int_{0}^T \big ( u_{m'}(t) -u(t) , v(t)\big )  \dd t \rightarrow 0$, as $m'\rightarrow \infty$.
\item The weak convergence of the subsequence $u_{m'}$ to $u$ in $L^2(0,T ; \mathbb{V})$ holds in the sense that: for any $v \in L^2(0,T ; \mathbb{V}')$, we have $\int_{0}^{T}\big < v(t) , u_{m'}(t) - u(t) \big >_{\mathbb{V}'\otimes  \mathbb{V}} \dd t \rightarrow 0$, as $m' \rightarrow \infty$.
\end{itemize}
Recall that $\psi \in C^1([0,T])$, so it is plain to see that $\psi' w_j \in C^0([0,T] ; \mathbb{V}) \subset L^1 (0,T ; \textbf{H}\oplus \mathbb{F})$. Hence, we can invoke the weak-$\ast$ convergence of $u_{m'}$ to $u$ in $L^{\infty}(0,T ; \textbf{H}\oplus \mathbb{F}) $ to deduce

\begin{equation}\label{PASSONE}
\lim_{m'\rightarrow \infty} \int_{0}^T \big ( u_{m'}(t) , \psi'(t) w_j \big )\dd t =  \int_{0}^T \big ( u(t) , \psi'(t) w_j \big ) \dd t .
\end{equation}

Also, it is equally plain to see that $\psi A(w_j) \in L^2(0,T ; \mathbb{V}')$.  Hence, we invoke the weak convergence of $u_{m'}$ to $u$ in $L^2(0,T ; \mathbb{V})$ to obtain
\begin{equation}\label{PASSTWO}
\begin{split}
\lim_{m'\rightarrow \infty} \int_{0}^T (( u_{m'}(t) , \psi(t) w_j  )) \dd t & = \lim_{m\rightarrow \infty} \int_{0}^T
\big < \psi(t) A (w_j) , u_{m'}(t)  \big >_{\mathbb{V}'\otimes  \mathbb{V}} \dd t \\
& =  \int_{0}^T \big < \psi(t) A (w_j) , u(t)  \big >_{\mathbb{V}'\otimes  \mathbb{V}} \dd t \\
& = \int_{0}^T (( u(t) , \psi(t) w_j  )) \dd t .
\end{split}
\end{equation}

Next, we have to deal with the more delicate part, namely passing to the limit for the nonlinear term $b(u_{m'}(t) , u_{m'}(t) , w_j)$, as $m\rightarrow \infty$.\\

\noindent{\bf Step 6 : Passing to the limit for the nonlinear part of the Navier-Stokes equation.}
From Step 4 we have
\begin{equation}\label{fractionalestONE}
\|\tilde u_m\|_{H^{\gamma}(\mathbb{R} , \mathbb{V} , \textbf{H}\oplus \mathbb{F} )} \leq C_0(T, u_0, \gamma), \quad \gamma \in (0,\frac 14) .
\end{equation}

Let $R > 0$, and  let $O \in \mathbb{H}^2(-a^2)$  be any selected reference point, and consider the geodesic ball $B_{0}(R) $.  We need to consider  the restriction of the $1$-form $\tilde u_m (t) = \sum_{j=1}^{m} \tilde g_{jm} (t) w_j$ onto the geodesic ball $B_{O}(R)$, which we denote by $\tilde u_m \big |_{B_{O}(R)}$. Then  we have $\tilde u_m \big |_{B_O(R)} \in L^{\infty}\big ( \mathbb{R} ; L^2(B_O(R)) \big ) \cap L^2 \big ( \mathbb{R} ; H^1(B_O(R)) \big ) $, which is ensured by the following estimates

\begin{equation}\label{TRIVIALClarify}
\begin{split}
\|\tilde u_m  \|_{L^\infty (\mathbb{R} ; L^2(B_O(R)) )} & \leq \|\tilde u_m\|_{L^\infty (\mathbb{R} ; \textbf{H}\oplus \mathbb{F})} \leq \|u_0\|_{L^2(\mathbb{H}^2(-a^2))} ,\\
\|\tilde u_m \|_{L^2(\mathbb{R} ; H^1(B_O (R)))} & \leq \|\tilde u_m\|_{L^2( \mathbb{R} ; \mathbb{V})} \leq \|u_0\|_{L^2(\mathbb{H}^2(-a^2))} .
\end{split}
\end{equation}

\begin{remark}
The two estimates in \eqref{TRIVIALClarify} are indeed trivial. The intention of demonstrating them here is to emphasize that, by taking restriction of $\tilde u_m$ onto $B_{O}(R)$, we \emph{no longer} work with the original functional spaces such as $ L^\infty (\mathbb{R} ; \textbf{H}\oplus \mathbb{F})$, or $L^2(\mathbb{R} ; \mathbb{V})$.
In passing to a suitable subsequence of $\{u_m\}_{m=0}^{\infty}$ in order to achieve strong convergence of $u_m$ to $u$ \textbf{in} $L^2(B_O(R))$, we will work with the functional spaces $L^\infty (0,T ; L^2(B_O(R)) )$ and $L^2(0,T ; H^1(B_O (R)))$.
\end{remark}

Also, one has to make a careful distinction between $\widehat{ \tilde u_m}(t)$ and $  ( \tilde u_m \big |_{B_O(R)} )^{\widehat{}}(t)$.
They are related via the simple identity

\begin{equation}\label{Verytrivial}
( \tilde u_m \big |_{B_O(R)} )^{\widehat{}}(t) = \widehat{ \tilde u_m}(t) \big |_{B_O(R)} ,
\end{equation}
whose validity is ensured by the following straightforward computation
\begin{equation}
( \tilde u_m \big |_{B_O(R)} )^{\widehat{}}(t) = \sum_{j=1}^{m} \widehat{\tilde g_{jm}}(t)  w_j \big |_{B_O(R)}
= \big ( \sum_{j=1}^m \widehat{\tilde g_{jm}}(t) w_j \big ) \big |_{B_O(R)}
= \widehat{\tilde u_m}(t) \big |_{B_O(R)} .
\end{equation}

So by \eqref{fractionalestONE} we have the following estimate
\begin{equation}\label{restrict1}
\begin{split}
\int_{-\infty}^{\infty}|\tau|^{2\gamma} \big \| ( \tilde u_m \big |_{B_O(R)} )^{\widehat{}}(\tau ) \big \|_{L^2(B_O(R))}^2 \dd \tau
& = \int_{-\infty}^{\infty}|\tau|^{2\gamma} \big \| \widehat{\tilde u_m} (\tau )   \big \|_{L^2(B_O(R))}^2 \dd \tau \\
& \leq \|\tilde u_m\|_{H^{\gamma}(\mathbb{R} , \mathbb{V} , \textbf{H}\oplus \mathbb{F} )}^2 \leq C_0^2.
\end{split}
\end{equation}
Then  the first estimate in \eqref{TRIVIALClarify} and \eqref{restrict1} simply yield the following uniform estimate
\begin{equation}\label{fractionaestTwo}
\|\tilde u_m\|_{H^{\gamma} (\mathbb{R} ; H^1(B_O(R)) , L^2(B_O(R))  )} \leq \big ( C_0^2 + \|u_0\|_{L^2}^2\big )^{\frac{1}{2}} .
\end{equation}
Observe that by construction, we have $\supp (\tilde u_m \big |_{B_O(R)} ) \subset [0,T]$, for each $m \in \mathbb{Z}^+$. So, by invoking the compactness Lemma \ref{compact2}, we deduce from \eqref{fractionaestTwo} that by further passing to a subsequence of the subsequence $u_{m'}$, which will still be written as $u_{m'}$ for simplicity, it follows that we have the following strong convergence of $u_{m'}$ to $u$
\begin{equation}\label{strongL^2convergence}
\lim_{m'\rightarrow \infty} \|u_{m'} - u \|_{L^2 ( 0,T ;L^2(B_O(R))  )} = 0.
\end{equation}

Now, recall that the basis $\{w_j\}_{j=1}^{\infty}$ of $\mathbb{V} = \textbf{V} \oplus \mathbb{F}$ is given by the following rule
\begin{equation}
\begin{split}
w_{2k-1} & = e_k ,\\
w_{2k} & = \dd F_k ,
\end{split}
\end{equation}
where $\{e_k\}_{k=1}^{\infty} \subset \Lambda^1_{c,\sigma}(\H)$ is a basis of $\textbf{V}$, while $\{\dd F_k\}_{k=1}^{\infty}$ constitutes a basis for $\mathbb{F}$. Now, we further split our discussion into two subcases as follows.\\

\noindent{\bf Subcase 1, the easier one: passing to the limit in the case of $w_{2k-1} = e_k$. }\\

In the case of $w_{2k-1} = e_k$, we follow \cite[p. 196]{Temam} and pass to the limit for the term $\int_0^T b(u_{m'} , u_{m'} , e_k) \dd t$ as follows. Since $e_k$ is a smooth $d^*$-closed $1$-form with compact support, we can just choose the radius $R > 0$ to be sufficiently large so that $\supp e_k \subset \subset B_O(R)$, where $O$ is some preferred reference point in $\mathbb{H}^2(-a^2)$. Then, we have the following estimate

\begin{equation}
\begin{split}
\big | b(u-u_{m'} , e_k , u_{m'} )(t)  \big | & = \bigg | \int_{B_O(R)} \overline{g} \big ( \overline{\nabla}_{(u - u_{m'})(t)} e_k ,u_{m'}(t)  \big )\Vol_{\H} \bigg | \\
& \leq \big \|\overline{\nabla} e_k \big \|_{L^{\infty}} \big \| u(t) - u_{m'}(t)  \big \|_{L^2(B_O(R))}  \big \| u_{m'}(t) \big \|_{L^2(B_O(R))}
\end{split}
\end{equation}
Here, of course, $\|\overline{\nabla} e_k\|_{L^{\infty}}$ means the $L^{\infty}$-norm of $\overline{\nabla} e_k$ over $\mathbb{H}^2(-a^2)$, which is definitely finite, since $e_k$ is a smooth 1-form with compact support. By integrating the above inequality over the time interval $[0,T]$, we yield
\begin{align}
\int_0^T \big | b(u-u_{m'} , e_k , u_{m'} )(t) \big | \dd t & \leq \big \|\overline{\nabla} e_k \big \|_{L^{\infty}} \cdot
\big \| u-u_{m'}  \big \|_{L^2(0,T ; L^2(B_O(R))  )}  \big \| u_{m'}  \big \|_{L^2(0,T ; L^2(B_O(R))  )} \nonumber\\
& \leq T^{\frac{1}{2}} \big \|\overline{\nabla} e_k \big \|_{L^{\infty}}  \big \|u_0 \big \|_{L^2}  \big \| u-u_{m'}  \big \|_{L^2(0,T ; L^2(B_O(R))  )} ,\label{GoodestONE}
\end{align}
by \eqref{energym}.
In the same way, we have
\begin{equation}\label{GoodestTWO}
\begin{split}
\int_0^T \big | b(u , e_k , u - u_{m'} )(t) \big | \dd t
  \leq  T^{\frac{1}{2}} \big \|u_0\|_{L^2}\cdot  \big \|\overline{\nabla} e_k \big \|_{L^{\infty}} \cdot \big \| u-u_{m'}  \big \|_{L^2(0,T ; L^2(B_O(R))  )} .
\end{split}
\end{equation}
Now, by means of the strong convergence of $u_{m'}$ to $u$ in $L^2(0,T ; L^2(B_O(R)))$, we pass to the limit in inequalities \eqref{GoodestONE}, and \eqref{GoodestTWO} to deduce that
\begin{equation}\label{PASSLIMITONE}
\begin{split}
& \lim_{m'\rightarrow \infty} \int_0^T \big | b(u-u_{m'} , e_k , u_{m'} )(t) \big | \dd t  = 0 ,\\
& \lim_{m'\rightarrow \infty} \int_0^T \big | b(u , e_k , u - u_{m'} )(t) \big | \dd t = 0.
\end{split}
\end{equation}
However, observe that by \eqref{bp1} we have the following identity.
\begin{equation}\label{usualid}
\begin{split}
\int_0^T b(u_{m'} ,u_{m'} , e_k )(t) - b(u,u,e_k)(t) \dd t & =
\int_0^T -b(u_{m'}, e_k, u_{m'})(t) + b(u,e_k,u)(t) \dd t \\
& =  \int_0^T b(u-u_{m'} , e_k , u_{m'})(t) \dd t + \int_0^T b(u,e_k, u-u_{m'})(t) \dd t .
\end{split}
\end{equation}
As a result, \eqref{PASSLIMITONE} immediately implies
\begin{equation}
\lim_{m' \rightarrow \infty}\int_0^T b(u_{m'} ,u_{m'} , e_k )(t) \dd t = \int_0^T b(u,u,e_k)(t) \dd t .
\end{equation}

\noindent{\bf Subcase 2, the more delicate one: passing to the limit in the case of $w_{2k} = \dd F_k \in \mathbb{F}$. }\\
In this case of $w_{2k} = \dd F_{k} \in \mathbb{F}$, the idea in \cite[p.196]{Temam} cannot be used directly, simply due to the fact that $w_{2k} = \dd F_{k}$ is no longer compactly supported. So, to make that idea survive in this case, we first decompose $w_{2k}= \dd F_{k}$ into a local part within an open ball $B_O(R)$ with sufficiently large radius $R >0$ (i.e. , $\dd F_{k} \big |_{B_O(R)}$), and a nonlocal part outside of $B_O(R)$ (i.e. the restriction of $\dd F_k$ on $\mathbb{H}^2(-a^2)-B_O(R)$ ). The secret behind the success of this decomposition lies in the fact that $\dd F_k \in \mathbb{F}$ enjoys the property that $\|\dd F_k\|_{H^1} < \infty$, which enables us to reduce the $L^2$-norm of $\overline{\nabla}\dd F_k$ on the \emph{exterior domain} $\mathbb{H}^2(-a^2)-B_O(R)$ to become as small as we want by choosing the radius $R > 0$ to be sufficiently large. In this way, the contribution to the term as appears in \eqref{mainTerm} below due to the nonlocal part $\dd F_k \big |_{\mathbb{H}^2(-a^2)-B_O(R)}$ will become as small as we want by choosing a large radius $R$:
\begin{equation}\label{mainTerm}
\int_0^T b(u_{m'} ,u_{m'} , \dd F_k )(t) - b(u,u,\dd F_k)(t) \dd t.
\end{equation}
Then, we will deal with the contribution to the term as appears in \eqref{mainTerm} due to the local part $\dd F_k \big |_{B_O(R)}$ by passing to the limit as $m'\rightarrow \infty$, just as what we did in dealing with the case of $w_{2k-1} =e_k$. Here, let us demonstrate the details of this argument as follows.\\

First, we have, as in \eqref{usualid}
\begin{equation}\label{Goodstuff}
\begin{split}
& \int_0^T b(u_{m'} ,u_{m'} , \dd F_k )(t) - b(u,u,\dd F_k)(t) \dd t \\
=&  \int_0^T b(u-u_{m'} , \dd F_k , u_{m'})(t) \dd t + \int_0^T b(u, \dd F_k, u-u_{m'})(t) \dd t .
\end{split}
\end{equation}
We first decompose the term $\int_0^T b(u-u_{m'} , \dd F_k , u_{m'})(t) \dd t$ into the local part and the far-range part in the following manner.
\begin{equation}\label{localFarrangeONE}
\begin{split}
& \int_0^T  b(u-u_{m'} , \dd F_k , u_{m'})(t) \dd t  \\
= & \int_0^T \int_{B_O(R)} \overline{g} \big ( \overline{\nabla}_{(u-u_{m'})(t)} \dd F_k , u_{m'} \big ) \dd t
+ \int_0^T \int_{\mathbb{H}^2(-a^2)-B_O(R)} \overline{g} \big ( \overline{\nabla}_{(u-u_{m'})(t)} \dd F_k , u_{m'} \big ) \dd t
\end{split}
\end{equation}

Now, we estimate the far range part of $\int_0^T  b(u-u_{m'} , \dd F_k , u_{m'})(t) \dd t$  by means of Holder's inequality as follows

\begin{equation}\label{HOLDERFARONE}
\begin{split}
& \bigg |\int_0^T \int_{\mathbb{H}^2(-a^2)-B_O(R)} \overline{g} \big ( \overline{\nabla}_{(u-u_{m'})(t)} \dd F_k , u_{m'}(t) \big ) \dd t  \bigg | \\
\leq & \big \|\overline{\nabla}\dd F_k  \big \|_{L^2(\mathbb{H}^2(-a^2)-B_O(R))} \cdot \big \| u-u_{m'} \big \|_{L^2(0,T ; L^4(\mathbb{H}^2(-a^2)))}
\cdot \big \| u_{m'} \big \|_{L^2(0,T ; L^4(\mathbb{H}^2(-a^2)))}.
\end{split}
\end{equation}
However, by means of Ladyzhenskaya's inequality \eqref{ldh}, we deduce from \eqref{Uniformfunctionalest} that we have the following uniform estimates
\begin{equation}\label{USEFULUNIFORM}
\begin{split}
\big \| u_{m'} \big \|_{L^2(0,T ; L^4(\mathbb{H}^2(-a^2)))} & \leq C_0 \big \| u_{m'} \big \|_{L^2(0,T ; H^1(\mathbb{H}^2(-a^2)))}  \leq C_0 (1+ T)^{\frac{1}{2}} \|u_0\|_{L^2} , \\
\big \| u \big \|_{L^2(0,T ; L^4(\mathbb{H}^2(-a^2)))} & \leq C_0 \big \| u \big \|_{L^2(0,T ; H^1(\mathbb{H}^2(-a^2)))}  \leq C_0 (1+ T)^{\frac{1}{2}} \|u_0\|_{L^2}.
\end{split}
\end{equation}
Hence, it follows from \eqref{HOLDERFARONE} that we have for all $R > 0$
\begin{equation}\label{OK}
\begin{split}
& \bigg |\int_0^T \int_{\mathbb{H}^2(-a^2)-B_O(R)} \overline{g} \big ( \overline{\nabla}_{(u-u_{m'})(t)} \dd F_k , u_{m'}(t) \big ) \dd t  \bigg |  \\
\leq & C_0^2 (1+T) \|u_0\|_{L^2}^2 \cdot \big \|\overline{\nabla}\dd F_k  \big \|_{L^2(\mathbb{H}^2(-a^2)-B_O(R))}.
\end{split}
\end{equation}
Now, thanks a lot to the fact that $\dd F_k$, as an element in $\mathbb{F}$, must enjoy the property that
\begin{equation}
\big \|\overline{\nabla}\dd F_k  \big \|_{L^2(\mathbb{H}^2(-a^2))} \leq \|\dd F_k\|_{H^1 (\mathbb{H}^2(-a^2))} < \infty ,
\end{equation}
 we easily deduce that we must have
\begin{equation}
\lim_{R \rightarrow \infty} \big \|\overline{\nabla}\dd F_k  \big \|_{L^2(\mathbb{H}^2(-a^2)-B_O(R))} = 0.
\end{equation}
So, for any arbitrary small positive number $\epsilon > 0$, we can choose some sufficiently large positive $R_{\epsilon} >0$, depending only on $\epsilon >0$ and $\dd F_k$, such that
\begin{equation}\label{smallL2}
\|\overline{\nabla}\dd F_k  \big \|_{L^2(\mathbb{H}^2(-a^2)-B_O(R_{\epsilon}))} < \epsilon.
\end{equation}
Hence, from \eqref{localFarrangeONE} and \eqref{OK} that
\begin{equation}\label{GOODOUTCOMEONE}
\begin{split}
& \bigg | \int_0^T  b(u-u_{m'} , \dd F_k , u_{m'})(t) \dd t \bigg | \\
\leq & \bigg | \int_0^T \int_{B_O(R_{\epsilon})} \overline{g} \big ( \overline{\nabla}_{(u-u_{m'})(t)} \dd F_k , u_{m'} \big ) \dd t \bigg |
+ C_0^2 (1+T) \big \|u_0 \big \|_{L^2}^2 \cdot \epsilon .
\end{split}
\end{equation}
In exactly the same way, we decompose the term $\int_0^T  b(u , \dd F_k , u- u_{m'})(t) \dd t$ into the local and far range part  (except for this time, we knew already that we use the radius $R_{\epsilon} > 0$ in our decomposition), and obtain
\begin{equation}\label{GOODOUTCOMETWO}
\begin{split}
& \bigg | \int_0^T  b(u , \dd F_k , u - u_{m'})(t) \dd t \bigg |  \\
\leq & \bigg | \int_0^T \int_{B_O(R_{\epsilon})} \overline{g} \big ( \overline{\nabla}_{u(t)} \dd F_k , (u-u_{m'})(t) \big ) \dd t \bigg |
+ C_0^2 (1+T) \|u_0\|_{L^2}^2  \cdot  \epsilon .
\end{split}
\end{equation}
Now, by combining the uniform estimates \eqref{GOODOUTCOMEONE} and \eqref{GOODOUTCOMETWO}, we deduce from \eqref{Goodstuff} that
\begin{equation}\label{FinalGlory}
\begin{split}
&\bigg | \int_0^T b(u_{m'} ,u_{m'} , \dd F_k )(t) - b(u,u,\dd F_k)(t) \dd t \bigg | \\
\leq & \bigg | \int_0^T \int_{B_O(R_{\epsilon})} \overline{g} \big ( \overline{\nabla}_{(u-u_{m'})(t)} \dd F_k , u_{m'} \big ) \dd t \bigg | \\
&+ \bigg | \int_0^T \int_{B_O(R_{\epsilon})} \overline{g} \big ( \overline{\nabla}_{u(t)} \dd F_k , (u-u_{m'})(t) \big ) \dd t \bigg |
+ 2C_0^2 (1+T) \|u_0\|_{L^2}^2  \cdot  \epsilon .
\end{split}
\end{equation}
Next, we carry out the following two uniform estimates by means of Holder's inequality in the same spirit of estimates \eqref{GoodestONE} and \eqref{GoodestTWO}
\begin{equation}\label{GoodestThree}
\begin{split}
& \bigg | \int_0^T \int_{B_O(R_{\epsilon})} \overline{g} \big ( \overline{\nabla}_{(u-u_{m'})(t)} \dd F_k , u_{m'} \big ) \dd t \bigg | \\
\leq &  \big \|\overline{\nabla} \dd F_k \big \|_{L^{\infty}(B_O(R_{\epsilon}))}
\big \| u-u_{m'}  \big \|_{L^2(0,T ; L^2(B_O(R_{\epsilon}))  )}  \big \| u_{m'}  \big \|_{L^2(0,T ; L^2(B_O(R_{\epsilon}))  )} \\
\leq & T^{\frac{1}{2}} \big \|\overline{\nabla} \dd F_k \big \|_{L^{\infty}(B_O(R_{\epsilon}))}  \big \|u_0 \big \|_{L^2}  \big \| u-u_{m'}  \big \|_{L^2(0,T ; L^2(B_O(R_{\epsilon}))  )} ,
\end{split}
\end{equation}
and
\begin{equation}\label{GoodestFour}
\begin{split}
& \bigg | \int_0^T \int_{B_O(R_{\epsilon})} \overline{g} \big ( \overline{\nabla}_{u(t)} \dd F_k , (u-u_{m'})(t) \big ) \dd t \bigg | \\
\leq &  T^{\frac{1}{2}} \big \|u_0\|_{L^2}  \big \|\overline{\nabla} \dd F_k \big \|_{L^{\infty}(B_O(R_{\epsilon}))}  \big \| u-u_{m'}  \big \|_{L^2(0,T ; L^2(B_O(R_{\epsilon}))  )}.
\end{split}
\end{equation}
\begin{remark}
In  \eqref{GoodestThree} and \eqref{GoodestFour}, we have employed the local $L^{\infty}$-estimate $\big \|\overline{\nabla} \dd F_k \big \|_{L^{\infty}(B_O(R_{\epsilon}))}$ which is always finite, simply due to the classical smoothness of $\overline{\nabla} \dd F_k $ over $\mathbb{H}^2(-a^2)$.
\end{remark}
Now, recall that by \eqref{strongL^2convergence} we have strong convergence of $u_{m'}$ to the limiting function $u$ in $L^2(0,T ; L^2(B_O(R_{\epsilon})))$.
So, by taking $\limsup$ in \eqref{GoodestThree} and \eqref{GoodestFour}, it follows from \eqref{FinalGlory} that the following relation must hold
\begin{equation}
\limsup_{m'\rightarrow \infty}  \bigg | \int_0^T b(u_{m'} ,u_{m'} , \dd F_k )(t) - b(u,u,\dd F_k)(t) \dd t \bigg | \leq 2C_0^2 (1+T) \|u_0\|_{L^2}^2  \cdot  \epsilon .
\end{equation}
Since the above $\limsup$-estimate holds for any positive number $\epsilon >0$, it follows that we must have the following conclusion
\begin{equation}
\lim_{m'\rightarrow \infty}  \bigg | \int_0^T b(u_{m'} ,u_{m'} , \dd F_k )(t) - b(u,u,\dd F_k)(t) \dd t \bigg | = 0 ,
\end{equation}
which is equivalent to saying that we finally have
\begin{equation}
\lim_{m'\rightarrow \infty}\int_0^T b(u_{m'} ,u_{m'} , \dd F_k )(t) \dd t = \int_0^T b(u,u,\dd F_k)(t) \dd t ,
\end{equation}
as needed.

\noindent{\bf Step 7: Satisfying $u(0)=u_0 \in \textbf{H}\oplus \mathbb{F}$.}

With all the works  in Step 1 through  Step 6, we are now able to pass to the limit in \eqref{forpasstothelimiteq} and obtain
\begin{equation}\label{limitequationONE}
\begin{split}
-\int_{0}^{T} \big ( u(t) , w_j \big )  \psi'(t) \dd t & + \int_{0}^{T} ((  u(t) , w_j )) \psi (t) \dd t \\
& + \int_{0}^{T} b(u(t) , u(t) , w_j) \psi(t) \dd t = \big ( u_{0} , w_j\big )  \psi (0) ,   \quad j \geq 1.
\end{split}
\end{equation}

Since $\{w_k\}_{k=1}^{\infty}$ is a basis of $\mathbb{V} = \textbf{V} \oplus \mathbb{F}$, it follows, through a simple density argument, that the above relation still holds if the basis element $w_k$ is replaced by an arbitrary test $1$-form $v \in \mathbb{V}$.
Hence,
\begin{equation}\label{limitequationTWO}
\begin{split}
-\int_{0}^{T} \big ( u(t) , v \big )   \psi'(t) \dd t & + \int_{0}^{T} ((  u(t) , v )) \psi (t) \dd t \\
& + \int_{0}^{T} b(u(t) , u(t) , v) \psi(t) \dd t = \big ( u_{0} , v\big )   \psi (0),\quad  v \in \mathbb{V} .
\end{split}
\end{equation}
By taking $\psi$ to be a smooth compactly supported function on $(0,T)$ in \eqref{limitequationTWO}, we have that the limiting function $u \in L^{\infty}(0,T ; \textbf{H}\oplus \mathbb{F}) \cap L^2(0,T; \mathbb{V})$ satisfies the following equation on $(0,T)$ in the distributional sense
\begin{equation}\label{equivalentequation}
\frac{\dd }{\dd t} \ip{u(t) , v}_{\mathbb{V}'\otimes \mathbb{V}} = - \ip{Au(t) + B u(t) , v}_{\mathbb{V}'\otimes \mathbb{V}} ,\quad v\in \V,
\end{equation}
where we recall that $Au(t), Bu(t) \in \mathbb{V}'$ are defined by
\begin{equation}
\begin{split}
\ip{Au(t) , v}_{\mathbb{V}'\otimes \mathbb{V}} & = ((u(t) , v)) ,\\
\ip{Bu(t) , v}_{\mathbb{V}'\otimes \mathbb{V}} & = b(u(t) , u(t) , v) = \int_{\mathbb{H}^2(-a^2)} g (\overline{\nabla}_{u}u(t) , v) \Vol_{\mathbb{H}^2(-a^2)} .
\end{split}
\end{equation}
We now invoke Lemma \ref{useful_lemma} to deduce from \eqref{equivalentequation} that the following relation holds in the weak sense
\begin{equation}
\frac{\dd u}{\dd t} = -Au - Bu,
\end{equation}
and that $u \in C^0([0,T] ; \mathbb{V}')$ satisfies the following relation, where $u(0)$ simply means the value of the continuous $\mathbb{V}'$-valued function $u(t)$ at the end-point $t =0$
\begin{equation}\label{ReallyGoodeq}
u(t) = u(0) - \int_{0}^t \big ( Au(\tau ) + Bu(\tau ) \big ) \dd \tau .
\end{equation}
We also note that since by Lemmas \ref{AONE} and \ref{ATWO}  $Au + Bu \in L^2(0,T ; \mathbb{V}')$, we therefore have
\be\label{betterregularity}
\frac{\dd u}{\dd t} \in L^2(0,T ; \mathbb{V}').
\ee

Next, we need to show that $u(0) \in \mathbb{V}'$ coincides with $u_0 \in L^2(\mathbb{H}^2(-a^2))$, \emph{provided} $u_0$ is being thought of as an element in the broader space $\mathbb{V}'$ through the natural inclusion $L^2(\mathbb{H}^2(-a^2)) \subset \mathbb{V}'$. For completeness we show the details omitted in \cite{Temam}.

First observe that, for any given $v \in \mathbb{V}$, \eqref{ReallyGoodeq} gives
\begin{equation}\label{ReallyGoodTwo}
\ip{u(t) , v}_{\mathbb{V}'\otimes \mathbb{V}} = \ip{u(0) , v}_{\mathbb{V}'\otimes \mathbb{V}} - \int_0^t \ip{Au(t) + Bu(t) , v}_{\mathbb{V}'\otimes \mathbb{V}}, \quad t \in [0,T]
\end{equation}
Using \eqref{ReallyGoodTwo} and a classical theorem  in Lebesgue measure theory (for instance \cite[Thm 3.35, p.106]{Folland}) allows us to deduce that $\ip{u(t) , v}_{\mathbb{V}'\otimes \mathbb{V}}$ must be absolutely continuous on $[0,T]$. Now, we take any $\psi \in C^1([0,T])$ satisfying $\psi(T) = 0$ and $\psi(0) = 1$. Then, it is easy to check that $\psi (t) \ip{u(t) , v}_{\mathbb{V}'\otimes \mathbb{V}} $ is also absolutely continuous on $[0,T]$, and hence by the same theorem  the \emph{classical} derivative $\frac{\dd }{\dd t} \big ( \psi (t) \ip{u(t) , v}_{\mathbb{V}'\otimes \mathbb{V}} \big ) $ exists for almost every $t \in [0,T]$, and

\begin{equation}\label{ReallyGoodThree}
\begin{split}
\ip{u(0), v}_{\mathbb{V}'\otimes \mathbb{V}} & = -\psi(T) \ip{u(T), v}_{\mathbb{V}'\otimes \mathbb{V}} + \psi(0) \ip{u(0), v}_{\mathbb{V}'\otimes \mathbb{V}} \\
& = - \int_0^T \frac{\dd }{\dd t} \bigg ( \psi (\tau ) \ip{u(\tau ) , v}_{\mathbb{V}'\otimes \mathbb{V}} \bigg ) \dd \tau.
\end{split}
\end{equation}

However, since the absolute continuity of $\ip{u(t), v}_{\mathbb{V}'\otimes \mathbb{V}}$ on $[0,T]$ also ensures the existence of the classical derivative $\frac{\dd}{\dd t}\ip{u(t), v}_{\mathbb{V}'\otimes \mathbb{V}}$ at almost every $t \in [0,T]$, we can apply the classical product rule at all those (almost everywhere) points to deduce
\begin{equation}\label{productRuleClassical}
\frac{\dd}{\dd t} \bigg ( \psi (\tau ) \ip{u(\tau ) , v}_{\mathbb{V}'\otimes \mathbb{V}} \bigg ) = \psi'(t)\ip{u(t ) , v}_{\mathbb{V}'\otimes \mathbb{V}}  + \psi(t) \frac{\dd}{\dd t}\ip{u(t) , v}_{\mathbb{V}'\otimes \mathbb{V}},\quad \mbox{a.e.}\ t.
\end{equation}
By combining \eqref{equivalentequation}, \eqref{ReallyGoodThree}, and \eqref{productRuleClassical}, we obtain
\begin{equation}\label{AnotherCopy}
\begin{split}
\ip{u(0), v}_{\mathbb{V}'\otimes \mathbb{V}} & = - \int_0^T \psi'(\tau )\ip{u(\tau ), v}_{\mathbb{V}'\otimes \mathbb{V}} \dd \tau \\
&+ \int_0^T \psi(\tau ) \ip{Au(\tau ) + Bu(\tau ) , v }_{\mathbb{V}'\otimes \mathbb{V}} \dd \tau \\
& = - \int_0^T \psi'(\tau ) (u(\tau ), v  ) \dd \tau + \int_0^T ((u(\tau ) , v )) \psi(\tau ) \dd \tau \\
& + \int_0^T b(u(\tau) u(\tau ) , v )\psi (\tau ) \dd \tau .
\end{split}
\end{equation}
Now, by comparing \eqref{limitequationTWO} with \eqref{AnotherCopy} we conclude that the following relation holds for any test $1$-form $v \in \mathbb{V}$.
\begin{equation}
\ip{u(0) , v}_{\mathbb{V}'\otimes \mathbb{V}} = \ip{u_0 , v}_{\mathbb{V}'\otimes \mathbb{V}} .
\end{equation}
This shows that as long as we think of the element $u_0 \in L^2(\mathbb{H}^2(-a^2))$ in the broader sense as an element in $\mathbb{V}'$, our limiting weak solution $u$, as a continuous $\mathbb{V}'$-valued function on $[0,T]$, will assume $u_0$ as its initial value at $t =0$. That is, the relation $u(0) = u_0$ holds in the sense of $\mathbb{V}'$.

However, since $\V\subset \textbf{H}\oplus \F \subset \V',$  we can use Lemma \ref{anotherlemma} and the facts that $u\in L^2(0,T; \V)$ and that $\frac{\partial u}{\partial t}\in L^2(0,T; \V')$ to deduce that in fact, $u \in C([0,T]; \textbf{H}\oplus \F)$.  Hence, $u(0)=u_0$ holds also in $\textbf{H}\oplus \F$.

\section{Uniqueness}\label{unique}
Let $u=u_1-u_2$, where $u_1, u_2$ solve \eqref{FINALEQUATION}.  The idea is to use the Gronwall's inequality applied to $\norm{u}_2$.  We stress that now that we have established the properties of $b$ and $B$ in the hyperbolic setting, the proof is identical to \cite{Temam} with the exception of one extra term that comes after application of \eqref{ldh} since that inequality now involves the  \emph{nonhomogenous} $H^1$ norm.

We have $u$ solves
\[
u_t+ Au+Bu_1-Bu_2=0,
\]
with $u(0)=0$.  By Lemma \ref{anotherlemma}, definition of $A$ and $B$
\begin{align*}
0&=\ip{u(t), u_t(t)+ Au(t)+Bu_1(t)-Bu_2(t)}_{\V\otimes  \V'}\\
&=\frac \dd{\dd t}\norm{u(t)}^2_2+2 \norm{u}^2_{\dot H^1}+2b(u_1(t),u_1(t),u(t))-2b(u_2(t),u_2(t),u(t)).
\end{align*}
Hence
\be\label{bu0}
\frac \dd{\dd t}\norm{u(t)}_2^2+2 \norm{u}^2_{\dot H^1}=2b(u_2(t),u_2(t),u(t))-2b(u_1(t),u_1(t),u(t)).
\ee
On the right-hand side we add and subtract $2b(u_1(t), u_2(t), u(t))$ and use multilinearity of $b$ and  \eqref{bp2} to obtain
\[
2b(u_2(t),u_2(t),u(t))-2b(u_1(t),u_1(t),u(t))=-2b(u(t),u_2(t),u(t)).
\]
Next by \eqref{b1} and Cauchy's inequality
\[
2\abs{b(u(t),u_2(t),u(t))}\leq 4C\norm{u(t)}_2\norm{u(t)}_{H^1}\norm{u_2(t)}_{\dot H^1}\leq 2\norm{u(t)}_{H^1}^2+{8C^2}\norm{u(t)}_2^2\norm{u_2(t)}^2_{\dot H^1}.
\]
Combining with \eqref{bu0} and rearranging we have
\[
\frac \dd{\dd t}\norm{u(t)}_2^2\leq 2 \norm{u(t)}^2_{2}+{8C^2}{}\norm{u(t)}_2^2\norm{u_2(t)}^2_{\dot H^1}=\norm{u(t)}_2^2\big(   2+{8C^2}\norm{u_2(t)}^2_{\dot H^1}      \big).
\]
Since $u(0)=0$, we are done by the Gronwall's inequality.

\section{Proofs of Corollaries}

\subsection{Conservation of energy: proof of Corollary \ref{ConserLemma}}\label{Briefsubsect}
Recall we wish to show
\be\label{conservE}
\|u(t,\cdot )\|_{L^2(\mathbb{H}^2(-a^2))}^2 + 2\int_{0}^{t} ((u,u))(s) \dd s = \|u_0\|_{L^2(\mathbb{H}^2(-a^2))}^2.
\ee
Now, with respect to a given initial data $u_0 \in \textbf{H}\oplus \mathbb{F}$, the unique solution $u \in L^{\infty}(0,T; \textbf{H}\oplus \mathbb{F} )\cap L^2(0,T; \mathbb{V} )$
 by \eqref{betterregularity} also satisfies
\begin{equation}
\partial_t u \in L^2(0,T; \mathbb{V}').
\end{equation}
This enables us to invoke Lemma \ref{anotherlemma}  to obtain
\begin{equation}\label{ONEONE}
\ip{\partial_t u(t) , u(t) }_{\mathbb{V}'\otimes \mathbb{V}} = \frac{1}{2} \frac{\dd }{\dd t} \|u(t)\|_{L^2(\mathbb{H}^2(-a^2))}^2.
\end{equation}
Thanks to \eqref{bp2} in Lemma \ref{ATWO}, we also have
\begin{equation}\label{TWO}
\ip{Bu(t) , u(t)}_{\mathbb{V}'\otimes \mathbb{V}} = b(u,u,u)(t) = 0, \forall t \in [0,T].
\end{equation}
So, by simply testing equation \eqref{FINALEQUATION} against $u$, it follows from \eqref{ONEONE} and \eqref{TWO} that
\begin{equation}
\frac{1}{2} \frac{\dd }{\dd t} \|u(t)\|_{L^2(\mathbb{H}^2(-a^2))}^2 + ((u,u))(t) = 0,
\end{equation}
form which we deduce, by taking the time integral from $0$ to $t$, that the desired identity, namely \eqref{conservE}, must hold for all $t \in [0,T]$.

\subsection{Survival of one solution: proof of Corollary \ref{survivalcor}}\label{confirm}
Let $u_{0}=\dd F\in L^2(\H)$ where $F$ is a harmonic function.  Now, the variational formulation says that $u$ is a solution if $u$ satisfies
\be\label{var1}
(u_{t},v)+(\Def u, \Def v)+b(u,u,v)=0
\ee
for every $v \in \V=\tV+\F$.  Now, consider $u=\exp(-\frac{A}{2}t)\dd F$, where $A$ is a constant.  Then $u(0)=u_{0}$.  

We show $u$ satisfies \eqref{var1} if and only if $A=4a^2$.   First,  since $(\Def u, \Def v)=(\Def^\ast \Def u, v)$,  we can write \eqref{var1} as
\be\label{var2}
(u_{t},v)-\int_{\H}g(\Delta u-2\Ric(u), v) +b(u,u,v)=0.
\ee
 Thanks to $\Delta u=0$ and $\Ric(u)=-a^{2}u$ we can  simplify LHS of \eqref{var2} to
\[
-\frac A2(u,v)+2a^{2}(u, v) +b(u,u,v).
\]
Consider $b(u,u,v)$.  By \cite[Lemma 6.1]{CC10}, $\nablab_{\nabla F}\dd F=\frac 12\dd \abs{\dd F}^2$, so by definition of $b$
\[
b(u,u,v)=\frac 12\exp(-A t)\int_{\H}g(\dd\abs{\dd F}^{2},v)=\frac12 \exp(- A t)\int_{\H}\dd^{\ast}(\abs{\dd F}^{2} v)=0
\]
if $\abs{\dd F}^{2}v$ in $L^{1}$, but that follows by Cauchy-Schwarz, $v \in \V$, Ladyzhenskaya and Lemma \ref{IMFinitedisspation}.  So we are left with needing to show
\[
-\frac A2(u,v)+2a^{2}(u, v)=0.
\]
Note, if $v \in \tV$, then this is automatically satisfied by the orthogonality property.  But in general, $v \in \tV+\F$, so the only way to guarantee that the equation is satisfied is to require $A=4a^{2}$.  This shows the survival of \emph{one} solution from the family of the non-unique solutions we have considered before. Moreover, this solution also gives \emph{equality} in the energy inequality  (can be seen by a direct computation or follows from Corollary  \ref{ConserLemma}).

\section{Pressure}\label{pressure}
The goal of this section is to show that the pressure can be recovered from the variational formulation.   More precisely we show Proposition \ref{pressuremainlemma}.  First, we collect some tools.

 In the classical literature (e.g. \cite{Lbook, Temam}) existence and regularity of the pressure is obtained usually  by means of the following lemma.
\begin{lemma}\label{p1}
Let $\Omega$ be any open set in   $\R^{n}$. If $w\in L^{2}_{loc}(\Omega)$, then $(w,v)=0$ for all
$v\in C^{\infty}_{c,\sigma}$ if and only if $w=\grad p$ for some $p\in  L^{2}_{loc}(\Omega)$  with $\grad p\in  L^{2}_{loc}(\Omega)$.
\end{lemma}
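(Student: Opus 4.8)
The plan is to prove the two implications separately. The \emph{if} direction is routine, and I would dispatch it first: assuming $w = \grad p$ with $p, \grad p \in L^2_{loc}(\Omega)$, for every $v \in C^\infty_{c,\sigma}$ the compact support of $v$ together with $\dv v = 0$ lets me integrate by parts with no boundary contribution, giving
\[
(w,v) = (\grad p , v) = -\int_{\Omega} p\, \dv v \dx = 0 .
\]
The integrability needed here is guaranteed since $p \in L^2_{loc}$ and $v$ is smooth with compact support.

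For the \emph{only if} direction --- the substantive one --- I would view $w$ as a current, i.e. a distributional $1$-form on $\Omega$, and read the hypothesis $(w,v) = 0$ for all $v \in C^\infty_{c,\sigma}$ as the statement that $w$ annihilates every compactly supported divergence-free test field. This is exactly the hypothesis of de Rham's theorem on currents \cite{DeRhamEng}: a current vanishing on the space of compactly supported coclosed (here divergence-free) test fields is exact. Invoking it yields a distribution $p \in \mathcal{D}'(\Omega)$ with $w = \grad p$, and this is the step I expect to carry the whole weight of the proof.

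I want to stress \emph{why} one cannot replace this by an elementary argument. Testing only against the explicit divergence-free fields $\partial_i\phi\, e_j - \partial_j\phi\, e_i$, with $\phi \in C^\infty_c(\Omega)$, recovers only
\[
\partial_i w_j - \partial_j w_i = 0 \quad \text{for all } i,j ,
\]
i.e. that the $1$-form $w$ is closed. Closedness produces a potential $p$ on each ball but not necessarily a single potential on all of $\Omega$ when $\Omega$ is topologically nontrivial; the cohomological obstruction is detected precisely by the \emph{circulating} divergence-free test fields (for example azimuthal fields around a hole), which lie in $C^\infty_{c,\sigma}$ but not in the linear span of the $\partial_i\phi\, e_j - \partial_j\phi\, e_i$. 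Handling the full test space at once is what de Rham's theorem does, and producing the \emph{global} $p$ is the main obstacle.

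Finally I would upgrade the regularity of $p$. Since $\grad p = w \in L^2_{loc}(\Omega)$, on any ball $B \subset\subset \Omega$ the distribution $p$ has gradient in $L^2(B)$; a standard fact (via mollification, or the Poincar\'e/Ne\v{c}as inequality) then gives $p \in H^1(B) \subset L^2(B)$. As $B$ is arbitrary, $p \in L^2_{loc}(\Omega)$ with $\grad p \in L^2_{loc}(\Omega)$, which completes the statement.
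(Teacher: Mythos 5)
Your proof is correct, and it is essentially the approach the paper itself takes: the paper states Lemma \ref{p1} without proof, quoting it as a classical result from the literature (Ladyzhenskaya, Temam), but its proof of the hyperbolic analog (Lemma \ref{p2} together with Lemma \ref{fortheSakeofpressure}) follows exactly your two-step scheme --- de Rham's theorem on currents (Theorem \ref{DeRham}) to produce a global potential, with the Hodge star implementing your identification of divergence-free test fields with closed $(n-1)$-forms, followed by a separate regularity upgrade. The only substantive difference lies in the regularity step: in your Euclidean setting one has $\grad p = w \in L^2_{loc}(\Omega)$ directly, so mollification plus the Poincar\'e inequality suffices, whereas the paper's Lemma \ref{fortheSakeofpressure} must work harder (Riesz representation, Hodge--Kodaira decomposition, Weyl's lemma) because there the right-hand side is only an $H^{-1}$ functional rather than an $L^2_{loc}$ form.
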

We would like to establish an analog of this in our setting.  First, as pointed out in \cite[p.10]{Temam}, existence of the pressure can follow from
\begin{thm}\cite[Theorem 17']{DeRhamEng}\label{DeRham}
The current $T$ is homologous to zero if and only if $T[\phi]=0$ for all closed $C^\infty$ forms with compact support.
\end{thm}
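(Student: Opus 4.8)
The statement is the de Rham duality theorem, so I sketch how I would prove it on an oriented $n$-manifold $M$. Recall that a $p$-dimensional current $T$ is a continuous linear functional on the space $\mathcal D^{p}(M)$ of smooth $p$-forms with compact support, that the boundary $\partial T$ is the $(p-1)$-current defined by $(\partial T)[\phi]=T[\dd\phi]$ (up to a fixed sign), that $T$ is a \emph{cycle} when $\partial T=0$, and that $T$ is \emph{homologous to zero} when $T=\partial S$ for some $(p+1)$-current $S$; the number $T[\phi]$ for a cycle $T$ and a closed compactly supported $\phi$ is its \emph{period}. The plan is to treat the two implications separately and to reduce the hard one to the case of smooth forms by regularization.

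First I would dispose of the easy direction: if $T=\partial S$, then for every closed $\phi\in\mathcal D^{p}(M)$ one has $T[\phi]=(\partial S)[\phi]=S[\dd\phi]=0$, which is just Stokes' theorem phrased for currents. I would also note that the period hypothesis already forces $T$ to be a cycle, since testing against exact forms $\phi=\dd\psi$ gives $(\partial T)[\psi]=T[\dd\psi]=0$ for all $\psi$.

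For the converse, the plan is to invoke de Rham's regularizing operators $R$ together with a homotopy operator $A$ on currents satisfying
\[
R-I=\partial A+A\partial,\qquad \partial R=R\partial,
\]
with the crucial smoothing property that $RT$ is always a current represented by a $C^{\infty}$ form. Applied to the cycle $T$ this yields $T=RT-\partial(AT)$, so $T$ is homologous to the smooth current $RT$, which is itself closed because $\partial RT=R\partial T=0$. Periods are preserved under this move: for closed $\phi\in\mathcal D^{p}(M)$,
\[
RT[\phi]=T[\phi]+(\partial AT)[\phi]=T[\phi]+(AT)[\dd\phi]=T[\phi]=0 .
\]
Hence it suffices to prove the theorem when $T=\beta$ is a closed smooth $(n-p)$-form with $\int_{M}\beta\wedge\phi=0$ for every closed $\phi\in\mathcal D^{p}(M)$. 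This smooth case is exactly the nondegeneracy half of Poincaré--de Rham duality: the period functional represents the class $[\beta]\in H^{n-p}(M)$ as an element of $\bigl(H^{p}_{c}(M)\bigr)^{*}$, and its vanishing forces $[\beta]=0$, i.e. $\beta=\dd\gamma$ for some smooth $\gamma$. Since the current of $\dd\gamma$ equals $\pm\partial$ of the current of $\gamma$, we get $RT=\partial(\pm\gamma)$ and therefore $T=\partial(\pm\gamma-AT)$, which is homologous to zero.

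The hard part will be the two structural inputs feeding this reduction: constructing de Rham's regularization pair $(R,A)$ with the homotopy identity and the smoothing property $RT\in C^{\infty}$, and establishing the nondegeneracy of the duality pairing for smooth forms. I would organize the argument so that these are the only nontrivial ingredients. Equivalently, one can recast the whole statement as a closed-range assertion: by the bipolar theorem, $T$ annihilates all closed compactly supported forms if and only if $T$ lies in the weak-$*$ closure of the space of boundaries $\{\partial S\}$, so the remaining content is precisely that this space of boundaries is already weak-$*$ closed, and the regularization operators are exactly what supply that closedness through the homotopy formula. This framing makes transparent that the genuine difficulty is analytic (closed range of $\partial$ on currents), while the homological bookkeeping is formal.
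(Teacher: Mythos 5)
The paper offers no proof of this statement at all---it is quoted directly from de Rham's book \cite{DeRhamEng}, which is exactly where your two deferred ingredients (the regularizing pair $(R,A)$ with $R-I=\partial A+A\partial$ and the smoothing property, plus the duality theorem for smooth forms) are established. Your sketch is correct and follows essentially de Rham's own route: the easy direction via Stokes for currents, the observation that vanishing periods force $\partial T=0$, the homotopy formula to replace $T$ by the homologous smooth closed form $RT$ with the same (vanishing) periods, and the injectivity half of Poincar\'e duality $H^{n-p}(M)\to (H^{p}_{c}(M))^{*}$ to conclude exactness.
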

We translate this theorem  into the language of the fluid mechanics.
The space of currents can be viewed as the dual to the space of differential forms.  More precisely
\begin{defn} [Currents] \cite[p.34]{DeRhamEng} Let $M$ be an $n-$dimensional manifold, and $\Lambda^k_c(M)$ denote smooth $k$-forms that are compactly supported in $M$.  Then the current $T$ is a linear functional on $\Lambda^k_c(M)$, with the action denoted by
\[
T[\phi],\quad \phi\in \Lambda^k_c(M).
\]
\end{defn}
A relevant example is an analog of $f \in L^1_{loc}$ giving a rise to a distribution:  if $\alpha$ is a locally integrable $(n-k)$-form, we can introduce
\be\label{Talpha}
T_\alpha[\phi]=\int_M \alpha \wedge \phi.
\ee
Hence, sometimes we write $\alpha[\phi]$ to denote \eqref{Talpha}.
Before we define what it means to be homologous to zero, we need
\begin{defn}[Boundary of a current] \cite[p.45]{DeRhamEng} The boundary of a current $T$, denoted by $\bb T$, is a current defined by
\[
\bb T[\phi]=T[\dd\phi].
\]
\end{defn}
Then the operator $\dd$ on currents is given by \be\label{currentd}\dd \theta = (-1)^{k+1} \bb \theta,\ee
with $k$ to be the degree of the current $\theta$ in question.  For future reference, we also have
\begin{align}
\alpha[\ast \beta]&=\beta \wedge \ast \alpha=(-1)^{k(n-k)}\ast \alpha\wedge \beta=(-1)^{k(n-k)}\ast \alpha[\beta].\label{currentp1}
\end{align}
\begin{defn}[Homologous to zero] \cite[p.79]{DeRhamEng} A current $T$ is homologous to zero if there exists a current $S$ such that $T=\bb S$.
\end{defn}
We can now prove the distributional analog of Lemma \ref{p1} on any manifold $M$ for which Theorem \ref{DeRham} is valid.  We address the regularity part in the subsequent lemma.
\begin{lemma}\label{p2}
 Let $T_{w}$ be a current that corresponds to a locally integrable $1-$form $w$ as given by \eqref{Talpha}.
Then $(w,v)=0$ for all
$v \in \Lambda^1_{c,\sigma}(M)$ if and only if $T_w=\dd P$ for some current $P$.
\end{lemma}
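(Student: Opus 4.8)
The plan is to transcribe the orthogonality condition $(w,v)=0$ into the hypothesis of de Rham's Theorem \ref{DeRham} by means of the Hodge star, apply that theorem to conclude that $T_w$ is homologous to zero, and finally convert ``homologous to zero'' into $T_w=\dd P$ using the relation \eqref{currentd} between $\bb$ and $\dd$. Throughout, $T_w$ is the current attached through \eqref{Talpha} to the locally integrable $1$-form $w$; since $w$ has degree $1$, $T_w$ acts on compactly supported $(n-1)$-forms and is a current of degree $1$.

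First I would rewrite the pairing as an action of $T_w$. By \eqref{star},
\be
(w,v)=\int_M g(w,v)\,\Vol_M=\int_M w\wedge\ast v=T_w[\ast v],
\ee
in which $\ast v$ is an $(n-1)$-form supported in $\supp v$. The crucial point is that $\ast$ restricts to a bijection from $\Lambda^1_{c,\sigma}(M)$ onto the space of compactly supported \emph{closed} $(n-1)$-forms: applying \eqref{star3} to a $1$-form gives $\dd^*v=-\ast\dd\ast v$, so $\dd^*v=0$ if and only if $\dd(\ast v)=0$; and conversely every closed compactly supported $(n-1)$-form $\psi$ equals $\ast v$ for $v=\ast^{-1}\psi\in\Lambda^1_{c,\sigma}(M)$. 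Consequently the condition $(w,v)=0$ for all $v\in\Lambda^1_{c,\sigma}(M)$ is equivalent to $T_w[\psi]=0$ for every closed $C^\infty$ $(n-1)$-form $\psi$ with compact support.

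Because $T_w$ acts only on $(n-1)$-forms, this is exactly the hypothesis of Theorem \ref{DeRham}, which therefore produces a current $S$ with $T_w=\bb S$. Here $\bb$ lowers the dimension by one, so $S$ has degree $0$, and \eqref{currentd} yields $\dd S=(-1)^{0+1}\bb S=-T_w$; thus $T_w=\dd P$ with $P=-S$. The converse is a direct unwinding of the definitions: if $T_w=\dd P$, then $P$ has degree $0$, so for any closed compactly supported $(n-1)$-form $\psi$,
\be
T_w[\psi]=\dd P[\psi]=-\bb P[\psi]=-P[\dd\psi]=0,
\ee
and feeding this back through the bijection $v\mapsto\ast v$ returns $(w,v)=0$ for all $v\in\Lambda^1_{c,\sigma}(M)$.

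The manipulations with the Hodge star and the pairing are routine, and the invocation of Theorem \ref{DeRham} is immediate once the test classes are matched. The one place demanding genuine care --- and the main obstacle --- is the bookkeeping of degrees and signs in passing between the boundary operator $\bb$ of de Rham's homology and the exterior derivative $\dd$ on currents via \eqref{currentd}, together with the verification that the closed $(n-1)$-forms appearing in Theorem \ref{DeRham} are precisely the Hodge duals of the divergence-free $1$-forms in the statement. The regularity of $P$ is not treated here; it is established in the subsequent lemma.
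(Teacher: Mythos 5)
Your proof is correct and follows essentially the same route as the paper's: rewrite $(w,v)=T_w[\ast v]$, observe that $\ast$ exchanges $\Lambda^1_{c,\sigma}(M)$ with the compactly supported closed $(n-1)$-forms, and invoke Theorem \ref{DeRham}. The only difference is cosmetic: you make explicit the degree and sign bookkeeping in \eqref{currentd} (and verify the converse by unwinding $\bb$), whereas the paper absorbs both into the ``if and only if'' of de Rham's theorem.
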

\begin{proof}
Recall the definition of the Hodge $\ast$ operator \eqref{star}
\[
w\wedge \ast v =g(w,v)\Vol_M.
\]
Then
\be\label{p2b}
(w,v)=\int_M g(w,v) \Vol_M=\int_M w \wedge \ast v \Vol_M=T_w[\ast v].
\ee
Next, if $v \in  \Lambda^1_{c,\sigma}(M)$, then $\dd^{\ast}v=0$, so $\ast v$ is an $n-1$-form, and it is closed  by the definition of $\dd^{\ast}$ (see \eqref{star2}).  Conversely, if $\phi$ is $n-1$-form that is closed, then $\ast \phi$ is a $1$-form such that $\dd^{\ast}\ast \phi=0$.  Hence from \eqref{p2b} we have
\[
(w,v)=0\quad \forall v \in \Lambda^1_{c,\sigma} \quad \mbox{if and only if}\quad T_w[\phi]=0\quad  \forall \ \ \mbox{closed forms}\ \ \phi  \in \Lambda^{n-1}_c(M).
\]
So by Theorem \ref{DeRham}, $T_w=\dd P$ if and only if $(w,v)=0$ for all
$v \in \Lambda^1_{c,\sigma}(M)$ as needed.

\end{proof}

\begin{lemma}\label{fortheSakeofpressure}
Let $T \in H^{-1}(\mathbb{H}^2(-a^2))$. Suppose there exists a current $P$ of degree $0$  such that  $\dd P = T$ holds in the sense that
\begin{equation}\label{gradexpress}
\dd P [v] = \langle T , v  \rangle_{H^{-1} \otimes H^1_0}, \ \forall v \in \Lambda^1_{c}(\H).
\end{equation}
Then, it follows that $P \in L^2_{loc}(\mathbb{H}^2(-a^2))$.
\end{lemma}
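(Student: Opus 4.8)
The plan is to read the statement as a \emph{local} elliptic regularity result and reduce it to the classical fact, essentially due to Ne\v{c}as and recorded in \cite{Temam}, that a distribution on a Euclidean domain whose gradient lies in $H^{-1}$ is itself in $L^2_{loc}$. Since membership in $L^2_{loc}(\H)$ is a local condition, it suffices to show that $P$ is represented by an $L^2$ function on an arbitrary relatively compact coordinate neighborhood. First I would fix a reference point $O$, pass to the Poincar\'e disc coordinates $Y$ of \eqref{PoincareDisc}, and fix a Euclidean ball $B \Subset D_O(1)$.

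In these coordinates the degree-$0$ current $P$ is simply a scalar distribution: writing a compactly supported top form as $h\,\dd y^1 \wedge \dd y^2$, the assignment $h \mapsto P[h\,\dd y^1 \wedge \dd y^2]$ defines an element $\tilde p \in \mathcal{D}'(D_O(1))$. The next step is to transcribe the hypothesis $\dd P = T$ in these coordinates. Using the current identity $\dd P = (-1)^{0+1}\bb P = -\bb P$ together with $\bb P[v] = P[\dd v]$, one has $\dd P[v] = -P[\dd v]$ for every $v \in \Lambda^1_c(\H)$. Expanding $v = v_1\,\dd y^1 + v_2\,\dd y^2$ gives $\dd v = (\partial_1 v_2 - \partial_2 v_1)\,\dd y^1 \wedge \dd y^2$, and integrating by parts distributionally yields
\be
\dd P[v] = -\langle \tilde p , \partial_1 v_2 - \partial_2 v_1\rangle = \langle \partial_1 \tilde p , v_2\rangle - \langle \partial_2 \tilde p , v_1\rangle ,
\ee
where the derivatives of $\tilde p$ are taken in the Euclidean distributional sense. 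Comparing with $\dd P[v] = \ip{T , v}_{H^{-1}\otimes H^1_0}$ and varying $v_1$ and $v_2$ independently in $C^\infty_c(B)$ identifies both components $\partial_1 \tilde p$ and $\partial_2 \tilde p$ with the action of $T$.

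It then remains to verify that $\nabla^{\R^2}\tilde p \in H^{-1}(B)$ in the Euclidean sense, which is where the conformal geometry enters. By Corollary \ref{equivH1} the norm of $H^1_0(\H)$ is equivalent to $\norm{\cdot}_{L^2} + \norm{\overline{\nabla}\cdot}_{L^2}$; on the relatively compact set $\overline B$ the metric $\overline g$, the volume form $\Vol_\H$, and the Christoffel symbols are smooth and bounded, with bounded inverses, so this norm restricted to forms supported in $B$ is comparable to the Euclidean $H^1_0(B)$-norm. Passing to duals, the bound $|\ip{T,v}| \leq \norm{T}_{H^{-1}(\H)}\norm{v}_{H^1(\H)}$ transfers the functional above to an element of the Euclidean $H^{-1}(B)$; hence $\nabla^{\R^2}\tilde p \in H^{-1}(B)$. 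Applying the Ne\v{c}as--de Rham regularity lemma (see \cite{Temam}) on the ball $B$ gives $\tilde p \in L^2(B)$, and since $\Vol_\H$ is comparable to Lebesgue measure on $\overline B$, this means $P \in L^2$ on the corresponding region of $\H$. As $O$ and $B \Subset D_O(1)$ were arbitrary, $P \in L^2_{loc}(\H)$.

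The main obstacle I anticipate is the bookkeeping of Step two and three: one must check that the invariant duality pairing $\ip{T,v}$, after accounting for the metric factors hidden in $\overline g$ and $\Vol_\H$, really does reduce on $\overline B$ to the Euclidean pairing of $\nabla^{\R^2}\tilde p$ against a test field, and that the norm equivalence is invoked on a \emph{fixed} $\overline B$ before letting $B$ exhaust $D_O(1)$. The conformal equivalence renders all these factors harmless on compact sets, so once the translation is set up correctly the conclusion is a direct appeal to the standard Euclidean statement.
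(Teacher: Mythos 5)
Your argument is correct, but it takes a genuinely different route from the paper's. The paper proves Lemma \ref{fortheSakeofpressure} globally and Hodge-theoretically: it represents $T$ by an element $\eta\in H^1_0(\H)$ via the Riesz representation theorem, rewrites the pairing using $2\Def^*\Def v = 2\dd\dd^*v + \dd^*\dd v + 2a^2v$ so that $\dd P$ becomes an explicit combination of the currents $*\eta$, $\dd^*(*\dd^*\eta)$ and $\dd(*\dd\eta)$, and then applies the Hodge--Kodaira decomposition \cite{Kodaira} to $*\eta$, together with Weyl's lemma, to identify $P$ up to a constant with $-*\dd\eta-(1+2a^2)(\alpha_1+F)+F_2$, a sum of $L^2$, $L^2_{loc}$ and smooth functions. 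You instead localize through the single conformal chart \eqref{PoincareDisc}: the degree-$0$ current $P$ becomes a Euclidean distribution $\tilde p$ on $D_O(1)$; the hypothesis $\dd P[v]=\ip{T,v}$ becomes, after the sign bookkeeping of \eqref{currentd} (which you carry out correctly), the statement that $\partial_1\tilde p$ and $\partial_2\tilde p$ act through $T$; and the only geometric input is that for $1$-forms supported in a fixed ball $B\Subset D_O(1)$ one has $\norm{v}_{H^1(\H)}\leq C_B\norm{v}_{H^1_0(B)}$, which follows from boundedness of the metric coefficients and Christoffel symbols on $\overline{B}$ --- note this one-sided bound is all you actually need, not the full two-sided equivalence. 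This places $\nabla^{\R^2}\tilde p$ in the Euclidean $H^{-1}(B)$, and the classical Ne\v{c}as--de Rham regularity lemma (which is indeed recorded in \cite{Temam} in the form you need: a distribution whose first derivatives lie in $H^{-1}$ is $L^2_{loc}$, and $L^2$ on bounded Lipschitz domains) gives $\tilde p\in L^2(B)$; transferring back through the chart, on whose compact subsets $\Vol_{\H}$ is comparable to Lebesgue measure, yields $P\in L^2_{loc}(\H)$. Your route is shorter, exploits the two-dimensional conformal flatness, and avoids the Hodge decomposition, Weyl's lemma, and the paper's somewhat sketched claim that $\alpha_1\in L^2_{loc}$ (asserted there to follow ``by a similar method'' as Lemma \ref{Gradientspacecomplete}), at the cost of importing the nontrivial Euclidean regularity lemma as a black box; the paper's route stays intrinsic to $\H$ and produces an explicit global representation of the pressure, which carries more structural information than bare local square-integrability.
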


\begin{proof}
Recall that $H^{-1}(\mathbb{H}^2(-a^2))$ is by definition the dual space of $H^1_{0}(\mathbb{H}^2(-a^2))$ .

Now, since $T \in H^{-1}(\mathbb{H}^2(-a^2))$, and $H^1_0(\mathbb{H}^2(-a^2))$ is Hilbert  we can use the Riesz Representation Theorem to deduce that there exists a unique $1$-form $\eta \in H^1_0(\mathbb{H}^2(-a^2))$ for which the following holds for all  $v \in H^1_0(\mathbb{H}^2(-a^2))$
\begin{equation}\label{repofHinverse}
\ip{ T , v }_{H^{-1} \otimes H^1_0} = \int_{\mathbb{H}^2(-a^2)} g(\eta, v) \Vol_{\mathbb{H}^2(-a^2)} + 2\int_{\mathbb{H}^2(-a^2)} \overline{g}(\Def \eta , \Def v) \Vol_{\mathbb{H}^2(-a^2)} .
\end{equation}
Now, let $v \in \Lambda^1_{c}(\H)$.  By means of the formula $2\Def^* \Def v = 2\dd \dd^*v + \dd^*\dd v + 2a^2 v $, \eqref{repofHinverse} can be rephrased as
\begin{equation}\label{repofHinversetwo}
\begin{split}
\ip{T , v }_{H^{-1} \otimes H^1_0} & = \int_{\mathbb{H}^2(-a^2)} g(\eta, v) \Vol_{\mathbb{H}^2(-a^2)} + 2\int_{\mathbb{H}^2(-a^2)}  g(\eta ,\Def^* \Def v) \Vol_{\mathbb{H}^2(-a^2)} \\
& = (1 + 2a^2) \int_{\mathbb{H}^2(-a^2)} g(\eta, v) \Vol_{\mathbb{H}^2(-a^2)} + 2 \int_{\mathbb{H}^2(-a^2)} g(\eta , \dd \dd^* v) \Vol_{\mathbb{H}^2(-a^2)} \\
&\quad + \int_{\mathbb{H}^2(-a^2)} g (\eta , \dd^* \dd v) \Vol_{\mathbb{H}^2(-a^2)}\\
& = (1+ 2a^2) \int_{\mathbb{H}^2(-a^2)} g(\eta, v) \Vol_{\mathbb{H}^2(-a^2)} + 2 \int_{\mathbb{H}^2(-a^2)} \dd^* \eta \dd^* v \Vol_{\mathbb{H}^2(-a^2)} \\
&\quad + \int_{\mathbb{H}^2(-a^2)}  \overline{g}(\dd \eta , \dd v) \Vol_{\mathbb{H}^2(-a^2)}.
\end{split}
\end{equation}
In \eqref{repofHinversetwo}, the first identity follows directly from the definition of $\Def \eta \in L^2$ in the weak sense. The second equal sign also holds, since Lemma \ref{TrivialLemma} informs us that $\Def \eta \in L^2$  leads to the existence of the weak derivatives $\dd \eta \in L^2$ and $\dd^*\eta \in L^2$.

Next,
 we can express the three individual terms which appear in the last line of \eqref{repofHinversetwo} by means of the language of currents as
\begin{equation}\label{USEFUL}
\begin{split}
\int_{\mathbb{H}^2(-a^2)} g(v, \eta) \Vol_{\mathbb{H}^2(-a^2)} & =\int_{\H} v\wedge \ast \eta= -*\eta [v] ,\\
\int_{\mathbb{H}^2(-a^2)} \dd^* \eta \dd^* v \Vol_{\mathbb{H}^2(-a^2)}  & = \dd^* \big( * \dd^* \eta \big )[v] ,\\
\int_{\mathbb{H}^2(-a^2)}  \overline{g}(\dd \eta , \dd v) \Vol_{\mathbb{H}^2(-a^2)} & = -\dd\big ( *\dd \eta \big ) [v] .
\end{split}
\end{equation}

Now, suppose that we have a current $P$ such that $\dd P = T$ holds in the sense of  \eqref{gradexpress}. Then, it follows directly from \eqref{repofHinversetwo} and \eqref{USEFUL} that the following identity holds in the sense of currents  \begin{equation}\label{dpequation}
\dd P = -(1+2a^2) * \eta + 2 \dd^* \big ( * \dd^* \eta \big ) - \dd \big ( * \dd \eta \big ).
\end{equation}
It is clear that $*\eta, *\dd^* \eta, * \dd\eta \in L^2(\mathbb{H}^2(-a^2))$.
Next, by the Hodge-Kodaira Theorem \cite{Kodaira}, we have
\[
L^2(\H)=\overline{\dd\Lambda^0_c}^{L^2}\oplus \overline{\dd^\ast \Lambda^2_c}^{L^2}\oplus \F.
\]
Since $*\eta$ is in $L^2$, we have the following unique decomposition
\begin{equation}\label{HodgeDecomp}
* \eta = \dd \alpha_{1} + \dd^* \alpha_2 + \dd F ,
\end{equation}
where $\dd \alpha_{1} \in L^2(\mathbb{H}^2(-a^2))$, where $\alpha_{1}$ can be shown to be in $L^2_{loc}$ on $\H$ (for example by a similar method employed in the proof of Lemma \ref{Gradientspacecomplete}), $\dd^* \alpha_2 \in L^2(\mathbb{H}^2(-a^2))$ with $\alpha_2$ to be a current of degree $2$ on $\mathbb{H}^2(-a^2)$, and $\dd F \in L^2(\mathbb{H}^2(-a^2))$ with $F$ to be a harmonic function on $\mathbb{H}^2(-a^2)$. So, \eqref{dpequation} can be rephrased as
\begin{equation}\label{GooddPexp}
\dd P = \dd \big ( -* \dd\eta - (1+2a^2) (\alpha_1 + F)\big ) + \dd^* \big ( 2 * \dd^* \eta - (1+2a^2) \alpha_2 \big ),
\end{equation}
which holds in the sense of currents.
Since $\dd \circ \dd = 0$ holds for all currents, by taking  $\dd$ on both sides of the above identity, we have
\begin{equation}\label{KeyHarmonic}
0 = \dd \dd^* \big ( 2 * \dd^* \eta - (1+2a^2) \alpha_2 \big ) = (-\triangle ) \big ( 2 * \dd^* \eta - (1+2a^2) \alpha_2 \big ).
\end{equation}
Hence the current $( 2 * \dd^* \eta - (1+2a^2) \alpha_2 \big )$ of the top degree $2$ is harmonic.  It follows by the Weyl's Lemma that $( 2 * \dd^* \eta - (1+2a^2) \alpha_2 \big )$ must be a smooth harmonic $2$-form on $\mathbb{H}^2(-a^2)$ in the classical sense, and hence $\dd^* \big ( 2 * \dd^* \eta - (1+2a^2) \alpha_2 \big )$ is indeed a smooth $1$-form on $\mathbb{H}^2(-a^2)$. Also, observe that \eqref{KeyHarmonic} tells us that the now classically smooth $1$-form  $\dd^* \big ( 2 * \dd^* \eta - (1+2a^2) \alpha_2 \big )$ is $\dd$-closed on $\mathbb{H}^2(-a^2)$. So, it must be $\dd$-exact in the sense that there exists a smooth function $F_2$ on $\mathbb{H}^2(-a^2)$ such that
\begin{equation}\label{dexact}
\dd^* \big ( 2 * \dd^* \eta - (1+2a^2) \alpha_2 \big ) = \dd F_2 .
\end{equation}
So, it directly follows from \eqref{GooddPexp} and \eqref{dexact} that we have the following relation
\begin{equation}
\dd P = \dd \big (  -* \dd\eta - (1+2a^2) (\alpha_1 + F) + F_2    \big ) ,
\end{equation}
which immediately tells us that the current $P$ should be the same as $ -* \dd\eta - (1+2a^2) (\alpha_1 + F) + F_2 $ up to some constant $C_0 \in \mathbb{R}$. That is, we have
\begin{equation}
P =-* \dd\eta - (1+2a^2) (\alpha_1 + F) + F_2  + C_0 ,
\end{equation}
in which $* \dd\eta$ and $\alpha_1$ are $L^2$ and $L^2_{loc}$ functions on $\mathbb{H}^2(-a^2)$ respectively and $F$ and $F_2$ are classically smooth functions on $\mathbb{H}^2(-a^2)$. Hence, it follows that the current $P$ is indeed an $L^2_{loc}$ function on $\mathbb{H}^2(-a^2)$ as desired.
\end{proof}
We are now ready to prove Lemma \ref{pressuremainlemma}.
\begin{proof}[Proof o Proposition  \ref{pressuremainlemma}]
Here, the basic idea is: in order to recover the pressure term, it is absolutely necessary for us to think of the elements $Au$ and $Bu$, which already lie in $L^2(0,T ; \mathbb{V}')$, to be in the more restrictive functional space $L^2(0,T ; H^{-1}(\mathbb{H}^2(-a^2)) )$.   This follows from estimates \eqref{A2} and \eqref{b4}.

Now, recall that the existence theory ensures, for any prescribed initial data $u_0 \in \textbf{H}\oplus \mathbb{F}$, the existence of an element $u \in L^{\infty}(0,T ; \textbf{H}\oplus \mathbb{F} )\cap L^2(0,T ; \mathbb{V})$, with $\partial_t u \in L^2(0,T ; \mathbb{V}')$.  Then the following relation holds in the sense of $L^2(0,T; \mathbb{V}')$
\begin{equation}\label{Restrictiveidentity}
\partial_t u + Au + Bu = 0.
\end{equation}
Next, consider the terms $U \in C^0(0,T ; \mathbb{V})$ and $\textbf{B} \in C^0(0,T ; \mathbb{V}')$, which are defined by
\begin{equation}
\begin{split}
U(t) & = \int_0^t u(\tau ) \dd \tau , \\
\textbf{B}(t) & = \int_0^t Bu(\tau) \dd \tau .
\end{split}
\end{equation}
Of course, we at once know that $AU \in C^0(0,T; H^{-1}(\mathbb{H}^2(-a^2)))$. On the other hand, $\partial_t u \in L^2(0,T ; \mathbb{V}')$
immediately gives the following relation for every $t \in [0,T] $
\begin{equation}\label{Vanishingfunctional}
u(t) - u_0 + AU(t) + \textbf{B}(t) = 0 ,
\end{equation}
in which each term on the left-hand side is understood to be an element in $\mathbb{V}'$. 

However, observe that we can think of the expression $u(t) - u_0 + AU(t) + \textbf{B}(t)$ in the \emph{broader sense}, as a bounded linear functional on $H^1_0(\mathbb{H}^2(-a^2))$, which acts by integrating against an element $v \in H^1_0$. Now, with this broader sense of understanding  we see  \eqref{Vanishingfunctional} as saying that the bounded linear functional $u(t) - u_0 + AU(t) + \textbf{B}(t)$ on $H^1_0(\mathbb{H}^2(-a^2))$ actually vanishes identically on the proper subspace $\mathbb{V}$, which by itself includes $\Lambda_{c, \sigma}^1(\mathbb{H}^2(-a^2))$. This observation allows us to invoke Lemma \ref{p2} and Lemma \ref{fortheSakeofpressure} to deduce that there exists, for each $t\in [0,T]$, a  $0$-current $\textbf{P}(t) \in L^2_{loc}(\mathbb{H}^2(-a^2))$ such that the following relation holds in the sense of $H^{-1}(\mathbb{H}^2(-a^2))$
\begin{equation}\label{FinalGood}
u(t) - u_0 + A U(t) + \textbf{B}(t) = d \textbf{P}(t).
\end{equation}
In other  words, it means the same as saying that equation \eqref{FinalGood} holds in the weak sense as long as we test against  an arbitrary test $1$-form $v \in H^1_0(\mathbb{H}^2(-a^2))$.

To obtain \eqref{eqwithp}, we differentiate \eqref{FinalGood} in the sense of distributions and set $p$ to be the \emph{distributional} time derivative of $\textbf P$, $p=\partial_t \textbf P$.
\end{proof}

\section{$\V=\widetilde {\tV}$}\label{EVERYTHINGSECT}
Here we establish Theorem \ref{everything}.  By definition we have $\V\subset \widetilde {\tV}$.  So we just need to show $\widetilde {\tV}\subset \V $.  The starting point is the Hodge-Kodaira decomposition \cite{Kodaira}
\[
L^2(\H)=\overline{\dd\Lambda^0_c}^{L^2}\oplus \overline{\dd^\ast \Lambda^2_c}^{L^2}\oplus \F.
\]
Let $\alpha \in \widetilde{\tV}$.  Since $\widetilde{\tV}\subset H^1_0\subset L^2$, we have
\[
\alpha=\dd \alpha_1+\dd^\ast \alpha_2 + \dd F,
\]
where $\dd \alpha_1 \in \overline{\dd\Lambda^0_c}^{L^2}, \dd^\ast \alpha_2 \in \overline{\dd^\ast \Lambda^2_c}^{L^2}, \dd F\in \F.$  Next,  $\dd^\ast \alpha=0$ implies
\[
0=\dd^\ast \dd \alpha_1+\dd^\ast \dd^\ast \alpha_2 + \dd^\ast \dd F=\dd^\ast \dd \alpha_1,
\]
because $\dd^\ast \dd^\ast=0$ and $F$ is harmonic.  This means $\alpha_1$ must be harmonic, so $\alpha_1=0$ by orthogonality.  Hence
\[
\alpha=\dd^\ast \alpha_2 + \dd F,
\]
and by Lemma \ref{IMFinitedisspation}, it suffices to show $\dd^\ast \alpha_2 \in \tV$.  Because $\dd^\ast \alpha_2 \in \overline{\dd^\ast \Lambda^2_c}^{L^2}$, by definition $\dd^\ast \alpha_2 =\lim_{k\rightarrow \infty} \dd^* \beta_k$, where $\beta_k$ is a sequence of smooth $2$-forms  with compact support, and the limit is in $L^2$.  To show $\dd^\ast \alpha_2 \in \tV$ we need to find a sequence   of smooth $1$-forms with compact support that converges in the $H^1$ norm.
\begin{remark}
It is very tempting to let  the sequence be $\dd^*\beta_k$ since then it  is divergence free and smooth with compact support.  However, then we only would have $L^2$ convergence guaranteed and not $H^1$.  On the other hand, since $\dd^\ast \alpha_2 \in \V\subset H^1_0$, there is a sequence of smooth compactly supported $1$-forms that converges to it in $H^1$, but we do not know if the sequence is divergence free.  So it looks like we can have one but not the other.  We combine the two approaches below to get the needed one.
\end{remark}
Let $\psi_{R} : [0,\infty ) \rightarrow \mathbb{R}$, which satisfies $\chi_{[0,R]} \leq \psi_{R} \leq \chi_{[0,2R]}$, and $|\psi_{R}'| \leq \frac{1}{R}$. Let $O$ be a selected base point in $\H$, and let $\rho$ be the distance function from $O$.  Let $\epsilon>0$.  Since $\dd^\ast\alpha_2 \in H^1_0$, we can choose $R$ large enough so that
\be\label{farrange}
\norm{\dd^\ast(\psi_{R}(\rho)\alpha_2)-\dd^*\alpha_2}_{H^1}< \epsilon.
\ee
Next $\alpha_2$ is a $2$-form on $\H$, so it can be written as $\alpha_2=f\Vol_{\H}$, where $f$ is a function.  Then observe
\be\label{needlater2}
 H^1_0\ni \dd^\ast \alpha_2=-\ast \dd \ast \alpha_2=-\ast \dd f,
\ee
so $\dd f \in H^1_0$, and by Corollary \ref{equivH1}
\[
\norm{\dd f}_{L^2}+\norm{\overline{\nabla}\dd f}_{L^2}<\infty.
\]
Moreover just like before, we could show $f\in L^2_{loc}(\H)$.  Then we have that $\psi_R f \in H_0^2(\H)$, and therefore there exists a sequence of smooth functions $\eta_k$ with compact support such that
\be\label{needlater}
\norm{\eta_k-\psi_R f}_{L^2}+\norm{d(\eta_k-\psi_Rf)}+\norm{\overline{\nabla}d(\eta_k-\psi_Rf)}_{L^2}\rightarrow 0,\mbox{as}\ k\rightarrow \infty.
\ee
Then let $ \omega_k=\eta_k \Vol_{\H}$ and consider
\[
\norm{\dd^\ast\omega_k-\dd^\ast \alpha_2}_{H^1}\leq \norm{\dd^\ast\omega_k-\dd^\ast (\psi_R\alpha_2)}_{H^1}+\norm{\dd^\ast(\psi_{R}\alpha_2)-\dd^*\alpha_2}_{H^1},
\]
so by \eqref{farrange} we just need to estimate the first term.  By same simplification as in \eqref{needlater2} we have
\begin{align*}
\norm{\dd^\ast\omega_k-\dd^\ast (\psi_R\alpha_2)}_{H^1}=\norm{\ast \dd \eta_k-\ast\dd (\psi_R f)}_{H^1}\lesssim\norm{\dd \eta_k-\dd (\psi_R f)}_{H^1} \rightarrow 0,\mbox{as}\ k\rightarrow \infty,
\end{align*}
by \eqref{needlater}.
\appendix

\section{Computations in coordinates}\label{appendixa}

\subsection{Hyperboloid model}
We first give a concrete description of the space form $\mathbb{H}^2(-a^2)$ by means of the standard hyperboloid model.

The $2$-dimensional hyperbolic space $\mathbb{H}^2(-a^2)$, as a differentiable manifold, can be regarded as a $2$D submanifold in $\mathbb{R}^3$ given by

\begin{equation}
\mathbb{H}^2(-a^2) = \{ (x_0, x_1 , x_2 ) : x_0^2 - x_1^2 - x_2^2 = \frac{1}{a^2}  \}.
\end{equation}
 
Next, for each point $x = (x_0, x_1, x_2) \in \mathbb{R}^3$, the tangent space $T_x\mathbb{R}^3$ is equipped with the following symmetric quadratic form

\begin{equation}\label{lorentz}
\ip{v, w} = -v_0w_0 + v_1w_1 + v_2w_2 ,  \quad v,w \in T_x\mathbb{R}^3.
\end{equation}
So, by definition, the Riemannian metric $g(\cdot ,\cdot )$ on $\mathbb{H}^2(-a^2)$ is induced through the restriction of $\ip{\cdot ,\cdot}$ onto the tangent bundle of the submanifold $\H$.
In other words, for each point $x \in \mathbb{H}^2(-a^2)$, $g(\cdot , \cdot )_x$ is given by the following relation

\begin{equation}
g(\cdot , \cdot )_{x} = \ip{\cdot , \cdot }\big |_{x} .
\end{equation}

From now on, a point $x = (x_0, x_1, x_2)$ in $\mathbb{R}^3$ will be written as $x = (x_0 , x')$, with $x' = (x_1 , x_2)$.

\subsection{Local coordinates}
 
Here, we consider the unit disc $D_{0}(1) = \{y \in \mathbb{R}^2 : |y| <1 \}$ in $\mathbb{R}^2$ and the smooth map $Y : \mathbb{H}^2(-a^2) \rightarrow D_{0}(1)$ which is defined by
\begin{equation}\label{PoincareDisc}
Y (x) = \frac{x'}{x_0 + \frac{1}{a}}, \quad x \in \H.
\end{equation}

The smooth map $Y : \mathbb{H}^2(-a^2) \rightarrow D_{0}(1)$ maps $\mathbb{H}^2(-a^2)$ bijectively onto $D_0(1)$ with a smooth inverse. Hence, $Y$ can be chosen as a coordinate system on the manifold $\mathbb{H}^2(-a^2)$ (with one chart). This coordinate system is standard. See for example \cite[Ex 6, p.83]{Jost}.

Observe that the inverse map $Y^{-1} : D_0(1) \rightarrow \mathbb{H}^2(-a^2)$ is given by

\begin{equation}
Y^{-1} (y) = \bigg ( \frac{2}{a(1-|y|^2)} -\frac{1}{a}  ,  \frac{2y_1}{a(1- |y|^2)}  ,  \frac{2y_2}{a(1-|y|^2)}        \bigg ), \quad   y\in D_0(1).
\end{equation}

Next, we express the Riemannian metric $g(\cdot , \cdot )$ on $\mathbb{H}^2(-a^2)$ in terms of the coordinate system $Y$ as follows. Consider the two smooth vector fields $\frac{\partial}{\partial Y^1}$, and $\frac{\partial}{\partial Y^2}$ on $\mathbb{H}^2(-a^2)$, which are induced by the coordinate system $Y$ through the following rule, with $j = 1,2$, and any $y\in D_0(1)$

\begin{equation}
\frac{\partial}{\partial Y^j} \bigg |_{Y^{-1}(y)}  = \bigg ( \partial_{y_j} \big ( \frac{2}{a(1-|y|^2)} -\frac{1}{a} \big )  ,  \partial_{y_j} \big ( \frac{2y_1}{a(1- |y|^2)} \big ) ,    \partial_{y_j} \big ( \frac{2y_2}{a(1-|y|^2)} \big )        \bigg ).
\end{equation}

Then, by a direct computation

\begin{equation}
\begin{split}
\frac{\partial}{\partial Y^1} \bigg |_{Y^{-1}(y)} & = \bigg (  \frac{4y_1}{a(1-|y|^2)^2} , \frac{2(1-|y|^2) + 4y_1^2}{a(1-|y|^2)^2} ,
\frac{4y_1y_2}{a(1-|y|^2)^2}                         \bigg ) , \\
\frac{\partial}{\partial Y^2} \bigg |_{Y^{-1}(y)} & = \bigg ( \frac{4y_2}{a(1-|y|^2)^2} ,  \frac{4y_1y_2}{a(1-|y|^2)^2}  ,  \frac{2(1-|y|^2) + 4y_2^2}{a(1-|y|^2)^2}                     \bigg ),
\end{split}
\end{equation}

from which it follows that

\begin{equation}
\begin{split}
& g( \frac{\partial}{\partial Y^1} , \frac{\partial}{\partial Y^1}   ) = \frac{4}{a^2(1-|y|^2)^2} = g( \frac{\partial}{\partial Y^2} ,
 \frac{\partial}{\partial Y^2}   ) \\
&  g( \frac{\partial}{\partial Y^1} , \frac{\partial}{\partial Y^2}  ) = 0 .
\end{split}
\end{equation}
Hence,
\begin{equation}\label{metricExpress}
g(\cdot , \cdot ) = \frac{4}{a^2(1-|y|^2)^2} \bigg \{ \dd Y^1\otimes \dd Y^1 + \dd Y^2 \otimes dY^2 \bigg \} .
\end{equation}

Next let
\begin{equation}\label{OrthFrame}
e_j = \frac{a(1-|Y|^2)}{2} \frac{\partial}{\partial Y^j}, \quad j=1, 2.
\end{equation}
Observe that the two smooth vector fields $\{e_1, e_2\}$ constitute a globally defined orthonormal moving frame on $\mathbb{H}^2(-a^2)$, which specifies an orientation on $\mathbb{H}^2(-a^2)$.

Then, the induced dual frame $\{e_1^*, e_2^* \}$ is given by the following expression
\begin{equation}\label{Dualframe}
e_j^* = \frac{2}{a(1-|Y|^2)} \dd Y^j, \quad j=1, 2.
\end{equation}
Notice that the induced Riemannian metric $\overline{g}$ on $T^*(\mathbb{H}^2(-a^2))$ is the one with respect to which the dual frame $\{e_1^*, e_2^* \}$ becomes everywhere orthonormal. Hence, we have

\begin{equation}\label{Gupperscript}
\begin{split}
& g^{11} = \overline{g} (\dd Y^1,\dd Y^1) = \frac{a^2(1-|y|^2)^2}{4} = \overline{g} (\dd Y^2,\dd Y^2) = g^{22} , \\
& g^{12} = \overline{g} (\dd Y^1 , \dd Y^2 ) = 0 .
\end{split}
\end{equation}

Moreover, due to \eqref{metricExpress}, the $2\otimes  2$ matrix $\big ( g_{ij}  \big )$, with
$g_{ij} = g(\frac{\partial}{\partial Y^i}  , \frac{\partial}{\partial Y^j} ) $, is a diagonal matrix with $g_{11} = g_{22} = \frac{4}{a^2(1-|y|^2)^2}$. Hence we have $\sqrt{\det \big ( g_{ij}  \big )} = \frac{4}{a^2 (1-|y|^2)^2}$.

It follows, the Hodge-Laplacian $-\triangle = d^*d : C^{\infty}(\mathbb{H}^2(-a^2)) \rightarrow C^{\infty}(\mathbb{H}^2(-a^2)) $ can be represented in terms of the coordinate system $Y$ in the following way

\begin{equation}\label{LaplaceManifold}
\begin{split}
\triangle  f & =  \frac{1}{\sqrt{\det \big (g_{ij}  \big ) }} \frac{\partial}{\partial Y^{\alpha}}
\left( \sqrt{\det \big (g_{ij}  \big ) }  g^{\alpha \beta }  \frac{\partial}{\partial Y^{\beta}} f  \right)\\
& = \frac{a^2(1-|y|^2)^2}{4} \bigg ( \big ( \frac{\partial}{\partial Y^1} \big )^2 f  + \big ( \frac{\partial}{\partial Y^2} \big )^2 f  \bigg ) ,
\end{split}
\end{equation}

where $f$ is any smooth function on $\mathbb{H}^2(-a^2)$.  It follows that a given smooth function $F \in C^{\infty}(\mathbb{H}^2(-a^2))$ is a harmonic function on $\mathbb{H}^2(-a^2)$ if and only if $f = F \circ Y^{-1}$ is a harmonic function on the \emph{Euclidean} disc $D_{0}(1)$ in the ordinary sense.

For convenience, we will use the symbol $\nabla^{\mathbb{R}^2}$ to denote the standard gradient operator $\nabla^{\mathbb{R}^2}f = (\partial_{y_1}f  , \partial_{y_2}f )$ on $\mathbb{R}^2$. The use of this symbol $\nabla^{\mathbb{R}^2}$ is to avoid possible confusion with the gradient operator $\nabla$ on $\mathbb{H}^2(-a^2)$. Now, we observe that, for any smooth function $F$ on $\mathbb{H}^2(-a^2)$, we have
\begin{equation}\label{gradF1}
\nabla F = g( \nabla F ,e_1  ) e_1 + g (\nabla , e_2) e_2 ,
\end{equation}
where
\begin{equation}\label{gradF2}
g(\nabla F , e_j) = \ip{\dd F , \frac{a(1-|Y|^2)}{2} \frac{\partial}{\partial Y^j} }_{T\H^\ast\otimes  T\H} = \frac{a(1-|Y|^2)}{2} \frac{\partial F}{\partial Y^j}, \ j=1,2 .
\end{equation}
Hence, the following identity holds for any smooth function $F$ on $\mathbb{H}^2(-a^2)$ with $\dd F \in L^2(\mathbb{H}^2(-a^2))$.
\begin{equation}\label{writtenonDisc}
\begin{split}
\|\dd F\|^2_{L^2(\H)} & = \int_{\mathbb{H}^2(-a^2)} |\nabla F|^2 e_1^*\wedge e_2^* \\
& = \int_{\mathbb{H}^2(-a^2)} \bigg ( \frac{a(1-|y|^2)}{2} \bigg )^2 \bigg \{ \big (\frac{\partial F}{\partial Y^1} \big )^2 +  \big (\frac{\partial F}{\partial Y^2} \big )^2  \bigg \} \frac{4}{a^2(1-|y|^2)^2} \dd Y^1 \wedge \dd Y^2 \\
& = \int_{\mathbb{H}^2(-a^2)} \bigg \{ \big (\frac{\partial F}{\partial Y^1} \big )^2 +  \big (\frac{\partial F}{\partial Y^2} \big )^2  \bigg \} \dd Y^1 \wedge \dd Y^2 \\
& = \int_{D_0(1)} |\nabla^{\mathbb{R}^2} (F\circ Y^{-1})|^2 \dd y_1 \dd y_2 .
\end{split}
\end{equation}

\subsection{The Levi-Civita connection on $\mathbb{H}^2(-a^2)$.  }

We begin by obtaining a representation of the Levi-Civita connection $\overline{\nabla}$ acting on the space of smooth vector fields over $\mathbb{H}^2(-a^2)$ in terms of the natural coordinate system $Y = (Y^1 , Y^2): \mathbb{H}^2(-a^2) \rightarrow D_O(1) $ as introduced in \eqref{PoincareDisc}. In other words, we first calculate the Christoffel symbols $\Gamma_{ij}^k$ (for $1\leq i,j,k \leq 2$) which appears in the following representation formula.
\begin{equation}
\overline{\nabla} \frac{\partial}{\partial Y^i} = \Gamma_{ij}^k dY^j \otimes \frac{\partial}{\partial Y^k} .
\end{equation}
Indeed, in accordance with basic Riemannian geometry, we have the following useful formula for $\Gamma_{ij}^k$, where $g_{ij} = g(\frac{\partial}{\partial Y^i} , \frac{\partial}{\partial Y^j} )$, and $g^{ij}$ are specified in \eqref{Gupperscript}.
\begin{equation}
\Gamma_{ij}^k = \frac{1}{2} g^{k\alpha} \bigg \{ \frac{\partial g_{i\alpha } }{\partial Y^j} + \frac{\partial g_{j\alpha}}{\partial Y^i} - \frac{\partial g_{ij}}{\partial Y^\alpha}      \bigg \} .
\end{equation}
By means of the above formula, we  obtain
\begin{equation}\label{TABLE}
\begin{split}
\Gamma_{11}^1 & = \frac{2Y^1}{1-|Y|^2};\quad\Gamma_{12}^1  = \frac{2Y^2}{1-|Y|^2} = \Gamma_{21}^1, \\
\Gamma_{11}^2 & = \frac{-2Y^2}{1-|Y|^2} ; \quad \Gamma_{12}^2 = \frac{2Y^1}{1-|Y|^2} = \Gamma_{21}^2 , \\
\Gamma_{22}^1 & = \frac{-2Y^1}{1-|Y|^2} ; \quad \Gamma_{22}^2 = \frac{2Y^2}{1-|Y|^2} .
\end{split}
\end{equation}
 
In accordance with the information as provided in \eqref{TABLE}, we immediately get:
\begin{equation}\label{concreteCalculation}
\begin{split}
\overline{\nabla}\frac{\partial}{\partial Y^1} & = \frac{2}{1-|Y|^2} \bigg \{ Y_1 dY^1 \otimes \frac{\partial}{\partial Y^1} + Y_2 dY^2 \otimes \frac{\partial}{\partial Y^1}  - Y_2 dY^1 \otimes \frac{\partial}{\partial Y^2} + Y_1 dY^2 \otimes \frac{\partial}{\partial Y^2}   \bigg \} ,\\
\overline{\nabla}\frac{\partial}{\partial Y^2} & = \frac{2}{1-|Y|^2} \bigg \{ Y_2 dY^1 \otimes \frac{\partial}{\partial Y^1} - Y_1 dY^2 \otimes \frac{\partial}{\partial Y^1}  + Y_1 dY^1 \otimes \frac{\partial}{\partial Y^2} + Y_2 dY^2 \otimes \frac{\partial}{\partial Y^2}   \bigg \}.
\end{split}
\end{equation}
However, since it is plain to see that $dY^i \otimes \frac{\partial}{\partial Y^j} = e_i^* \otimes e_j$, where $\{e_1, e_2\}$ is the orthonormal frame on $\mathbb{H}^2(-a^2)$ as specified in \eqref{OrthFrame}, and $\{e_i^* , e_2^*\}$ is the respective orthonormal dual frame as specified in \eqref{Dualframe}. Since the induced Riemannian metric on the vector bundle $T^*\mathbb{H}^2(-a^2) \otimes \mathbb{H}^2(-a^2)$ is exactly the one with respect to which the frame $\{ e_i^* \otimes e_j : 1 \leq i,j \leq 2 \}$ is orthonormal. Due to these observations, it follows directly from \eqref{concreteCalculation} that we must have
\begin{equation}\label{simpleCalculation}
\bigg | \overline{\nabla}\frac{\partial}{\partial Y^1} \bigg | = \frac{2\cdot 2^{\frac{1}{2}}|Y|}{1-|Y|^2} = \bigg | \overline{\nabla}\frac{\partial}{\partial Y^2} \bigg | .
\end{equation}

\subsection{$\Def u$ in coordinates}\label{defudefuast}
$\overline{\nabla}$ is expressed in terms of the coordinate system $Y =(Y^1, Y^2)$ specified in \eqref{PoincareDisc} in the following way.  First,
\begin{equation}
\overline{dY^i} = - \Gamma_{jk}^i dY^k \otimes dY^j ,
\end{equation}
with $\Gamma_{jk}^i$ to be the Christoffel symbols as given in \eqref{TABLE}. So, we have the following straightforward expressions
\begin{equation}
\begin{split}
\overline{\nabla} \dd Y^1 & = \frac{2}{1-|Y|^2} \bigg \{ -Y^1 \dd Y^1\otimes \dd Y^1 -Y^2 \dd Y^1\otimes \dd Y^2 -Y^2 \dd Y^2\otimes \dd Y^1 + Y^1 \dd Y^2\otimes \dd Y^2     \bigg \} , \\
\overline{\nabla} \dd Y^2 & = \frac{2}{1-|Y|^2} \bigg \{ Y^2 \dd Y^1\otimes \dd Y^1 -Y^1 \dd Y^1\otimes \dd Y^2 -Y^1 \dd Y^2\otimes \dd Y^1 - Y^2 \dd Y^2\otimes \dd Y^2     \bigg \} .
\end{split}
\end{equation}
Now, for an arbitrary smooth $1$-form $u = u_1 dY^1 + u_2 dY^2$ on $\mathbb{H}^2(-a^2)$, $\overline{\nabla} u$ is expressed as follow.
\begin{equation}
\begin{split}
\overline{\nabla} u & = \big \{ \frac{\partial u_1}{\partial Y^1} -\frac{2Y^1u_1}{1-|Y|^2} + \frac{2Y^2u_2}{1-|Y|^2}   \big \} \dd Y^1\otimes \dd Y^1 \\
& + \big \{\frac{\partial u_2}{\partial Y^1} -\frac{2Y^2u_1}{1-|Y|^2} - \frac{2Y^1u_2}{1-|Y|^2}   \big \} \dd Y^1\otimes \dd Y^2 \\
& + \big \{ \frac{\partial u_1}{\partial Y^2} -\frac{2Y^2u_1}{1-|Y|^2} - \frac{2Y^1u_2}{1-|Y|^2}   \big \} \dd Y^2\otimes \dd Y^1 \\
& + \big \{ \frac{\partial u_2}{\partial Y^2} +\frac{2Y^1u_1}{1-|Y|^2} - \frac{2Y^2u_2}{1-|Y|^2}   \big \} \dd Y^2\otimes \dd Y^2.
\end{split}
\end{equation}
By symmetrizing all the terms in the above expression of $\overline{\nabla} u$, we yield the following expression for $\Def u = \frac{1}{2} \{\overline{\nabla} u + (\overline{\nabla} u)^T\}$.
\begin{equation}\label{DeformTENSOR}
\begin{split}
\Def u & = \big \{ \frac{\partial u_1}{\partial Y^1} -\frac{2Y^1u_1}{1-|Y|^2} + \frac{2Y^2u_2}{1-|Y|^2}   \big \} \dd Y^1\otimes \dd Y^1 \\
& + \big \{ \frac{1}{2} \big ( \frac{\partial u_1}{\partial Y^2} + \frac{\partial u_2}{\partial Y^1}  \big )
- \frac{2Y^2u_1}{1-|Y|^2} - \frac{2Y^1u_2}{1-|Y|^2} \big \} \big ( \dd Y^1 \otimes \dd Y^2 + \dd Y^2 \otimes \dd Y^1  \big ) \\
& + \big \{ \frac{\partial u_2}{\partial Y^2} +\frac{2Y^1u_1}{1-|Y|^2} - \frac{2Y^2u_2}{1-|Y|^2}   \big \} \dd Y^2\otimes \dd Y^2.
\end{split}
\end{equation}
 
\subsection{Integration by parts and all that}
We end this part of the appendix with some simple computations.
 \begin{lemma} Let $(M,g)$ be given. Then if $f$ is a $C^1$ function and $u, v$ are $C^1$  $1-$forms on $M$, then the following hold pointwise
\begin{align}
-\dd^{\ast} (fu)&=-f\dd^{\ast} u +g(\dd f, u),\label{ip4}\\
2 g( \nablab_uv, v)&=\abs{v}^2 \dd^\ast  u-\dd^\ast  (\abs{v}^2u),\label{ip0}\\
\abs{\grad \abs{u}^2}&\leq 2\abs{\nablab u}\abs{u},\label{ip2}\\
g(\nablab_uv,w)&\leq \abs{u}\abs{\nablab v}\abs{w}. \label{csg}
\end{align}
\end{lemma}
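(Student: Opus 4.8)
The four relations are all pointwise statements, and the plan is to deduce them from three elementary ingredients: that on $1$-forms $\dd^\ast$ is the negative of the trace of $\overline{\nabla}$ (equivalently $\dd^\ast u = -\dv u^\sharp$, consistent with the sign in \eqref{star3}), that $\overline{\nabla}$ is compatible with $g$, and the pointwise Cauchy--Schwarz inequality. I would handle them in the order \eqref{ip4}, \eqref{ip0}, \eqref{ip2}, \eqref{csg}, since the first is a Leibniz rule from which the second follows immediately, while the last two are inequalities of the same flavor obtained by contracting against a unit vector.

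For \eqref{ip4}, I would work in a local orthonormal frame $\{e_1,\dots,e_n\}$ and expand $\dd^\ast(fu) = -\sum_i (\overline{\nabla}_{e_i}(fu))(e_i)$. The Leibniz rule for $\overline{\nabla}$ splits each summand as $e_i(f)\,u(e_i) + f\,(\overline{\nabla}_{e_i}u)(e_i)$; summing over $i$, the first group assembles to $g(\dd f, u)$ and the second to $-f\,\dd^\ast u$, which is precisely \eqref{ip4}. (Equivalently, one may simply quote $\dv(fX) = f\,\dv X + g(\grad f, X)$ for the vector field $X = u^\sharp$.)

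For \eqref{ip0}, the plan is to specialize \eqref{ip4} to $f = \abs{v}^2$ and rewrite the term $g(\dd \abs{v}^2, u)$. Via the musical isomorphism, $g(\dd \abs{v}^2, u) = u^\sharp(\abs{v}^2)$, and metric compatibility gives $u^\sharp(g(v,v)) = 2\,g(\overline{\nabla}_u v, v)$; substituting into \eqref{ip4} and rearranging yields \eqref{ip0} at once. The two inequalities are then immediate. For \eqref{ip2}, note that $\grad \abs{u}^2$ is dual to $\dd \abs{u}^2$, so $\abs{\grad \abs{u}^2}$ equals the supremum of $\abs{X(\abs{u}^2)} = 2\,\abs{g(\overline{\nabla}_X u, u)}$ over unit vectors $X$; Cauchy--Schwarz bounds this by $2\,\abs{\overline{\nabla}_X u}\,\abs{u} \le 2\,\abs{\overline{\nabla}u}\,\abs{u}$, giving the claim. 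For \eqref{csg}, I would apply Cauchy--Schwarz twice: $g(\overline{\nabla}_u v, w) \le \abs{\overline{\nabla}_u v}\,\abs{w} \le \abs{\overline{\nabla}v}\,\abs{u}\,\abs{w}$, where the last step bounds the contraction of the tensor $\overline{\nabla}v$ against $u^\sharp$ by the tensor norm times $\abs{u}$.

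There is no genuine analytic obstacle here; the only thing demanding care is the bookkeeping of conventions. In particular, I would fix once and for all the sign $\dd^\ast = -\dv$ on $1$-forms and the repeated, silent use of the musical isomorphism to pass between the $1$-form $u$ and the vector field $u^\sharp$ in expressions such as $\overline{\nabla}_u v$ and $g(\dd f, u)$. With those conventions pinned down, each of \eqref{ip4}--\eqref{csg} collapses to a one-line computation, so I expect the write-up to be short.
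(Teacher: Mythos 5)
Your proposal is correct and takes essentially the same route as the paper: \eqref{ip4} is the divergence product rule (which the paper quotes from Lee and you verify in an orthonormal frame), \eqref{ip0} follows from metric compatibility combined with \eqref{ip4} applied to $f=\abs{v}^2$, and \eqref{ip2}, \eqref{csg} are the same compatibility-plus-Cauchy--Schwarz estimates, with your contraction bound $\abs{\overline{\nabla}_u v}\leq \abs{\overline{\nabla} v}\,\abs{u}$ being precisely the step the paper verifies in normal coordinates. There are no gaps.
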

\begin{proof}
\eqref{ip4} follows from \cite[Ex 3-3, p.43 ]{Lee} where it is stated for vector fields.  For \eqref{ip0} we first observe
\[
\dd g(v,w)(u)=g(\nablab_u v, w)+g(v, \nablab_u w)\label{ip1}.
\]
Hence if $w=v$, and using $\abs{v}^2=g(v,v)$ we have
\[
2 g( \nablab_uv, v)=  \dd \abs{v}^2(u)=g(\nabla \abs{v}^2,u)=g(\dd \abs{v}^2,u),
\]
where the second equality follows from the definition of the gradient, and the last one by the definition of the induced metric on the 1-forms.  Then \eqref{ip0} follows from \eqref{ip4}. (Recall, for the simplicity of notation $u$ is used to denote both the vector field and the corresponding $1$-form.)

\eqref{ip2} follows from the compatibility of the metric with the connection and Cauchy-Schwarz since
\[
\abs{\grad \abs{u}^2}=\abs{\grad g (u,u)}=2\abs{\bar g(\nablab u, u)}\leq 2\abs{\nablab u}\abs{u}.
\]
Finally, for \eqref{csg} by Cauchy-Schwarz it is enough to show
\[
g(\nablab_uv,\nablab_uv)\leq \abs{\nablab v}^2\abs{u}^2.
\]
But since this is a pointwise estimate, this follows from computing in normal coordinates just like it would in the Euclidean case, because then $\nablab_u v=\sum_{jk}u^j \partial_j v^k\frac{\partial}{\partial x^k}$, and $g_{ij}=\delta_{ij}$.
\end{proof}

\section{Finite dissipation via Complex Analysis}\label{awesome}

First we have the following lemma based on some elementary complex analysis.

\begin{lemma}\label{OURWAY}
Let $f : D_O(1)\rightarrow \mathbb{C}$ to be a holomorphic function on the open unit disc $D_O(1) = \{z =y_1 + i y_2 \in \mathbb{C} : |z| < 1 \}$ which satisfies
\begin{equation}\label{holomorphicfiniteenergy}
\int_{D_O(1)} |f(z)|^2 \dd y_1 \dd y_2 < \infty .
\end{equation}
Then it follows that $f'(z)$ satisfies the following property
\begin{equation}\label{OURresult}
\int_{D_O(1)} (1-|z|)^2 |f'(z)|^2 \dd y_1 \dd y_2 \leq \frac 12 \int_{D_O(1)} |f(z)|^2 \dd y_1 \dd y_2.
\end{equation}
\end{lemma}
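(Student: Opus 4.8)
The plan is to exploit the $L^2$-orthogonality of the monomials $z^n$ on the disc, which reduces the claimed inequality to an elementary term-by-term comparison of their weighted norms. First I would expand $f$ as a power series $f(z) = \sum_{n=0}^{\infty} a_n z^n$, convergent uniformly on every compact subset of $D_O(1)$, so that $f'(z) = \sum_{n=1}^{\infty} n a_n z^{n-1}$. Writing $z = re^{i\theta}$ and using the orthogonality relation $\int_0^{2\pi} e^{i(m-n)\theta}\, d\theta = 2\pi \delta_{mn}$, all cross terms drop out once one integrates the squared series over a sub-disc $D_O(\rho)$ with $\rho < 1$, leaving only diagonal contributions.

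On each $D_O(\rho)$ the series converge uniformly, so term-by-term integration is legitimate and yields the diagonal sums
\[
\int_{D_O(\rho)} |f|^2\, dy_1\, dy_2 = \sum_{n=0}^{\infty} |a_n|^2 \cdot 2\pi \int_0^{\rho} r^{2n+1}\, dr ,
\]
\[
\int_{D_O(\rho)} (1-|z|)^2 |f'|^2\, dy_1\, dy_2 = \sum_{n=1}^{\infty} n^2 |a_n|^2 \cdot 2\pi \int_0^{\rho} (1-r)^2 r^{2n-1}\, dr .
\]
Letting $\rho \uparrow 1$ and invoking monotone convergence (each radial integral increases to its value at $\rho = 1$) gives the coefficient $\frac{\pi}{n+1}$ for the first integral, and, via the Beta integral $\int_0^1 (1-r)^2 r^{2n-1}\, dr = \frac{2}{2n(2n+1)(2n+2)}$, the coefficient $\frac{\pi}{n(2n+1)(n+1)}$ for the second. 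The finiteness hypothesis \eqref{holomorphicfiniteenergy} guarantees $\sum_{n} |a_n|^2/(n+1) < \infty$, so all the manipulations are justified.

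The conclusion then follows from a coefficient-by-coefficient comparison. After multiplying by $n^2$, the $n$-th coefficient of the weighted-derivative norm is $\frac{n\pi}{(2n+1)(n+1)}$, while the $n$-th coefficient of half the $L^2$-norm is $\frac{\pi}{2(n+1)}$; the desired inequality for each fixed $n \geq 1$ is therefore exactly $\frac{n}{2n+1} \leq \frac{1}{2}$, that is, $2n \leq 2n+1$, which always holds. The $n = 0$ term contributes only to the right-hand side and is nonnegative, so summing the termwise inequalities over $n$ yields \eqref{OURresult}.

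I do not anticipate a genuine obstacle here: the heart of the argument is the trivial comparison $2n \le 2n+1$, and the rest is a direct computation of Bergman-type radial integrals. The only point requiring care is the justification of term-by-term integration together with the passage $\rho \uparrow 1$, which is why I would perform the calculation on the sub-discs $D_O(\rho)$ first and pass to the limit by monotone convergence, rather than integrating the formally expanded squared series over $D_O(1)$ directly.
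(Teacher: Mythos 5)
Your proposal is correct and follows essentially the same route as the paper's own proof: expand $f$ in a power series, use orthogonality of $e^{in\theta}$ to reduce both integrals to diagonal sums of radial (Beta-type) integrals, and conclude by the termwise comparison $\frac{n}{2n+1}\leq\frac{1}{2}$, which is exactly the paper's factor $\frac{2k}{2k+1}\leq 1$. The only difference is cosmetic: you justify the interchange of sum and integral by working on sub-discs $D_O(\rho)$ and letting $\rho\uparrow 1$ via monotone convergence, a step the paper performs implicitly.
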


\begin{proof}
Let $f$ be a holomorphic function $f$ on $D_O(1)$ which satisfies condition \eqref{holomorphicfiniteenergy}.  Then $f$ can be expressed in a form of a power series $$f(z) = \sum_{k=0}^{\infty} a_k z^k,$$ whose radius of convergence is of course $1$. Then for any $r \in (0,1)$ and $\theta \in [0,2 \pi )$ we have
\begin{equation*}
|f(r \cdot e^{i\theta})|^2 = f(r \cdot e^{i\theta}) \overline{f(r \cdot e^{i\theta})}
= \sum_{k=0}^{\infty} \sum_{l=0}^{\infty} a_k   \overline{a_l} r^{k+l} e^{i(k-l)\theta} ,
\end{equation*}
from which it follows directly that

\begin{equation}\label{vitalrelation}
\pi \cdot \sum_{k=0}^{\infty} \frac{|a_k|^2}{k+1} = \int_0^1 \int_0^{2\pi} |f(r\cdot e^{i\theta })|^2 r \dd r \dd \theta = \int_{D_O(1)} |f(z)|^2 \dd y_1\dd y_2 < \infty .
\end{equation}
In the same way, we easily get the following relation for $|f'(z)|^2$.
\begin{equation*}
|f'(r \cdot e^{i\theta})|^2 = f'(r \cdot e^{i\theta})  \overline{f'(r \cdot e^{i\theta})}
= \sum_{k=0}^{\infty} \sum_{l=0}^{\infty} (k+1)(l+1) a_{k+1}  \overline{a_{l+1}}  r^{k+l} e^{i(k-l)\theta} ,
\end{equation*}
from which one immediately gets
\begin{equation*}
\int_0^{2\pi} |f'(r\cdot e^{i\theta})|^2 \dd \theta = 2\pi \sum_{k=0}^{\infty} |a_{k+1}|^2 (k+1)^2 r^{2k} .
\end{equation*}
By using polar coordinates we deduce
\begin{equation}\label{veryExcitinginequality}
\begin{split}
\int_{D_O(1)} (1-|z|)^2 \cdot |f'(z)|^2 \dd y_1 \dd y_2 & = \int_0^1 r (1-r)^2  \int_0^{2\pi} |f'(r\cdot e^{i\theta})|^2 \dd \theta \dd r \\
& =  2\pi \sum_{k=0}^{\infty} \bigg ( |a_{k+1}|^2 (k+1)^2 \int_0^1 (1-r)^2 r^{2k+1} \dd r \bigg ) .
\end{split}
\end{equation}
By a simple computation, we have
\begin{equation}\label{VeryCareful}
\int_0^1 (1-r)^2r^{2k+1} \dd r = \frac{2}{(2k+2)(2k+3)(2k+4)} .
\end{equation}
So, it follows directly from \eqref{veryExcitinginequality}, and \eqref{VeryCareful} that we must have
\begin{align*}
\int_{D_O(1)} (1-|z|)^2 \cdot |f'(z)|^2 \dd y_1 \dd y_2&=\pi\sum_{k=0}^\infty\frac{\abs{a_{k+1}}^2}{(2k+3)}\frac{(k+1)}{(k+2)}\\
&=\frac \pi 2\sum_{k=1}^\infty\frac{\abs{a_{k}}^2}{(2k+1)}\frac{2k}{(k+1)}\\
&\leq \frac 12 \int_{D_O(1)}\abs{f(z)}^2 \dd y^1\dd y^2,
\end{align*}
as needed.
 \end{proof}

\begin{remark}
Here, let us mention that the crucial identity \eqref{vitalrelation} is classical and can easily be found in standard textbooks in Complex Analysis. Inequality \eqref{OURresult} seems to be less well-known. However, in the light of the long history of the theory of holomorphic functions and the elementary nature of \eqref{OURresult} we suspect that an estimate of that kind  could be presented somewhere in the vast literature, though it does not seem to be as easily located as \eqref{vitalrelation}.
\end{remark}

What we really need is actually the following lemma, which is a straightforward byproduct of Lemma \ref{OURWAY}.
\begin{lemma}\label{UsefulLEMMAONTHEDISC}
Let $u : D_O(1) \rightarrow \mathbb{R}$  be a harmonic function on the Euclidean unit disc $D_O(1) = \{y = (y_1 , y_2) : |y| <1\}$ which satisfies the following condition
\begin{equation}\label{EuclidFiniteEnergy}
\int_{D_O(1)} \big | \nabla^{\mathbb{R}^2} u \big |^2 \dd y_1 \dd y_2  = \int_{D_O(1)} |\partial_{y_1} u|^2 + |\partial_{y_2} u|^2 < \infty .
\end{equation}
Then, it follows that $|\nabla^{\mathbb{R}^2} \nabla^{\mathbb{R}^2} u |^2 = \sum_{1\leq j,k \leq 2} |\partial_{y_j}\partial_{y_k}u|^2 $ satisfies the following property
\begin{equation}\label{FinalGoryPride}
\int_{D_O(1)} (1-|y|)^2 |\nabla^{\mathbb{R}^2} \nabla^{\mathbb{R}^2} u |^2 \dd y_1 \dd y_2 \leq   \int_{D_O(1)} \big | \nabla^{\mathbb{R}^2} u \big |^2 \dd y_1 \dd y_2 .
\end{equation}
\end{lemma}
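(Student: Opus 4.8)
The plan is to reduce the desired estimate for the real harmonic function $u$ to the holomorphic estimate already established in Lemma \ref{OURWAY}. The key observation is that, since $D_O(1)$ is simply connected and $u$ is harmonic (hence $C^{\infty}$, indeed real-analytic), the complex-valued function
\[
f = \partial_{y_1} u - i \partial_{y_2} u
\]
is holomorphic on $D_O(1)$. First I would verify this by checking the Cauchy--Riemann equations for the real and imaginary parts $P = \partial_{y_1} u$ and $Q = -\partial_{y_2} u$: the equation $\partial_{y_1} P = \partial_{y_2} Q$ is precisely $\partial_{y_1}^2 u = -\partial_{y_2}^2 u$, i.e. the harmonicity $\Delta u = 0$, while $\partial_{y_2} P = -\partial_{y_1} Q$ is just the equality of mixed second partials $\partial_{y_1}\partial_{y_2} u = \partial_{y_2}\partial_{y_1} u$.

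Next I would match the hypotheses. Since $|f|^2 = (\partial_{y_1} u)^2 + (\partial_{y_2} u)^2 = |\nabla^{\mathbb{R}^2} u|^2$, the finite-energy assumption \eqref{EuclidFiniteEnergy} on $u$ is exactly the integrability condition \eqref{holomorphicfiniteenergy} required of $f$. Hence Lemma \ref{OURWAY} applies and yields
\[
\int_{D_O(1)} (1-|z|)^2 |f'(z)|^2 \dd y_1 \dd y_2 \leq \frac{1}{2} \int_{D_O(1)} |f(z)|^2 \dd y_1 \dd y_2 = \frac{1}{2} \int_{D_O(1)} |\nabla^{\mathbb{R}^2} u|^2 \dd y_1 \dd y_2 .
\]

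The remaining step is to identify the left-hand side with the full Hessian of $u$. Using that for holomorphic $f$ one has $f' = \partial_{y_1} f = \partial_{y_1}^2 u - i\, \partial_{y_1}\partial_{y_2} u$, I get $|f'|^2 = (\partial_{y_1}^2 u)^2 + (\partial_{y_1}\partial_{y_2} u)^2$. Invoking harmonicity in the form $\partial_{y_2}^2 u = -\partial_{y_1}^2 u$ together with $\partial_{y_1}\partial_{y_2} u = \partial_{y_2}\partial_{y_1} u$, the four entries of the Hessian collapse to
\[
|\nabla^{\mathbb{R}^2}\nabla^{\mathbb{R}^2} u|^2 = (\partial_{y_1}^2 u)^2 + 2(\partial_{y_1}\partial_{y_2} u)^2 + (\partial_{y_2}^2 u)^2 = 2\big( (\partial_{y_1}^2 u)^2 + (\partial_{y_1}\partial_{y_2} u)^2 \big) = 2|f'|^2 .
\]
Substituting this into the displayed inequality, the factor $2$ exactly cancels the $\frac{1}{2}$, producing \eqref{FinalGoryPride}. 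I do not anticipate any genuine obstacle: the whole content is the observation that the gradient of a harmonic function is the conjugate of a holomorphic function, and the only care required is the bookkeeping in the last step confirming that the Hessian norm is precisely twice $|f'|^2$, which is exactly what turns the constant $\frac{1}{2}$ into $1$.
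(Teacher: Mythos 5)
Your proposal is correct and takes essentially the same route as the paper: the paper writes $f=u+iv$ with $v$ a harmonic conjugate and applies Lemma \ref{OURWAY} to $f'$, while you apply it directly to the complex gradient $\partial_{y_1}u - i\,\partial_{y_2}u$ (which is exactly the paper's $f'$) after verifying the Cauchy--Riemann equations from harmonicity. The key identity $|\nabla^{\mathbb{R}^2}\nabla^{\mathbb{R}^2}u|^2 = 2\,(\,(\partial_{y_1}^2u)^2 + (\partial_{y_1}\partial_{y_2}u)^2\,)$, which converts the constant $\tfrac12$ into $1$, is the same bookkeeping step the paper performs.
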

\begin{proof}
Here, the point $y =(y_1 , y_2) \in D_O(1)$ is identified with the complex number $z= y_1 +i y_2$. Let $u : D_O(1)\rightarrow \mathbb{R}$  be a harmonic function which satisfies \eqref{EuclidFiniteEnergy}, and let $v : D_O(1) \rightarrow \mathbb{R}$ be the harmonic conjugate of $u$ on $D_O(1)$, so that $$f(z) = u(z) + i v(z)$$ is a holomorphic function on $D_O(1)$. Since $u$ and $v$ satisfy the Cauchy-Riemann equations we have
\begin{equation}
f' = \partial_{y_1}u + i  \partial_{y_1}v = \partial_{y_1}u - i \partial_{y_2} u ,
\end{equation}
which of course gives
\begin{equation}
|f'(z)|^2 = |\partial_{y_1}u |^2 + |\partial_{y_2} u|^2 = \big | \nabla^{\mathbb{R}^2}u \big |^2.
\end{equation}
As a result, condition \eqref{EuclidFiniteEnergy} is equivalent to
\begin{equation}
\int_{D_O(1)} |f'(z)|^2 \dd y_1 \dd y_2 =  \int_{D_O(1)} \big | \nabla^{\mathbb{R}^2} u \big |^2 \dd y_1 \dd y_2  < \infty .
\end{equation}
Hence, we can apply Lemma \ref{OURWAY} to $f'$ to deduce that we must have the following estimate \begin{equation}\label{relieveestimate}
\int_{D_O(1)} (1-|z|)^2 |f''(z)|^2 \dd y_1 \dd y_2 \leq \frac 12 \int_{D_O(1)} \big | \nabla^{\mathbb{R}^2} u \big |^2 \dd y_1 \dd y_2 .
\end{equation}
However, since $u$ is harmonic and $f''(z) = \partial_{y_1} \partial_{y_1} u - i \partial_{y_1} \partial_{y_2} u $, we obtain\begin{equation}
\begin{split}
|f''(z)|^2 & = \big | \partial_{y_1} \partial_{y_1} u \big |^2 +  | \partial_{y_1} \partial_{y_2} u \big |^2 \\
& = \frac{1}{2} \big \{ \big | \partial_{y_1} \partial_{y_1} u \big |^2 + 2 | \partial_{y_1} \partial_{y_2} u \big |^2  +  | \partial_{y_2} \partial_{y_2} u \big |^2   \big \} \\
& = \frac{1}{2} |\nabla^{\mathbb{R}^2} \nabla^{\mathbb{R}^2} u |^2.
\end{split}
\end{equation}
Thanks to this identity, estimate \eqref{relieveestimate} is equivalent to estimate \eqref{FinalGoryPride}, as desired.
\end{proof}


\section{Functional Analysis}\label{faa}

\begin{lemma}\label{useful_lemma} \cite[Lemma 1.1 p. 169]{Temam} Let $X$ be a Banach space and $X'$ its dual.  If $f, g \in L^1([a,b];X))$, then the following are equivalent
\begin{enumerate}
\item $u$ is a.e. equal to a primitive of $g$:
\[
u(t)=\xi +\int^t_0 g(s)ds \quad \xi \in X,\  \mbox{and for a.e.}\ t\in[a,b].
\]
\item $g$ is the weak time derivative of $u$:
\[
\int^b_a u(t)\phi'(t)\dd t=-\int^b_a g(t)\phi(t) \dd t\quad \mbox{for every  test function} \ \phi \in \mathcal D((a,b)).
\]
\item
\[
\frac \dd {\dd t}\ip{u,\eta}_{X\otimes  X'}=\ip{g,\eta}_{X\otimes  X'}\quad \mbox{for every }\ \eta \in X'.
\]
\end{enumerate}
\end{lemma}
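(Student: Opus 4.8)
The plan is to establish the three equivalences by proving $(1)\Rightarrow(2)$, $(2)\Leftrightarrow(3)$, and $(2)\Rightarrow(1)$, treating condition $(2)$ as the central distributional reformulation. (I read the hypothesis as $u,g\in L^1([a,b];X)$, the symbol $f$ being a misprint for $u$, and I take the primitive with base point $a$, the constant $\xi$ absorbing the choice of base point.) The pair $(2)\Leftrightarrow(3)$ is the soft functional-analytic part, whereas $(1)\Leftrightarrow(2)$ carries the real content, namely a vector-valued du Bois-Reymond lemma.

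First I would dispatch $(1)\Rightarrow(2)$. Assuming $u(t)=\xi+\int_a^t g(s)\,\dd s$, I substitute into $\int_a^b u(t)\phi'(t)\,\dd t$ for $\phi\in\mathcal D((a,b))$. The constant term contributes $\xi\int_a^b\phi'(t)\,\dd t=\xi(\phi(b)-\phi(a))=0$, and for the remaining term I apply Fubini's theorem for the Bochner integral to exchange the order of integration over the triangle $\{a\le s\le t\le b\}$, using $\int_s^b\phi'(t)\,\dd t=-\phi(s)$; this yields $-\int_a^b g(s)\phi(s)\,\dd s$, which is exactly $(2)$. Next, $(2)\Leftrightarrow(3)$ follows by testing against dual elements: applying $\eta\in X'$ to the $X$-valued identity in $(2)$, and using that $\eta$ passes through the Bochner integral, gives $\int_a^b\ip{u(t),\eta}\phi'(t)\,\dd t=-\int_a^b\ip{g(t),\eta}\phi(t)\,\dd t$, which is precisely the statement $(3)$ that the scalar distributional derivative of $\ip{u(\cdot),\eta}$ is $\ip{g(\cdot),\eta}$. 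Conversely, from $(3)$ the scalar identity holds for every $\eta$, so the vector $w:=\int_a^b u(t)\phi'(t)\,\dd t+\int_a^b g(t)\phi(t)\,\dd t\in X$ satisfies $\ip{w,\eta}=0$ for all $\eta\in X'$; since $X'$ separates points of $X$ by the Hahn-Banach theorem, $w=0$, which is $(2)$.

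The main obstacle is $(2)\Rightarrow(1)$. Here I set $w(t):=\int_a^t g(s)\,\dd s$, a continuous $X$-valued primitive of $g$ which, by the already-proved direction $(1)\Rightarrow(2)$, itself satisfies $(2)$ with the same $g$. Subtracting, the difference $v:=u-w$ obeys $\int_a^b v(t)\phi'(t)\,\dd t=0$ for every $\phi\in\mathcal D((a,b))$, and the task reduces to showing that such a $v$ is a.e. constant. To this end I would fix $\phi_0\in\mathcal D((a,b))$ with $\int_a^b\phi_0(s)\,\dd s=1$ and, for arbitrary $\psi\in\mathcal D((a,b))$, set $\phi(t)=\int_a^t\big(\psi(s)-(\int_a^b\psi)\,\phi_0(s)\big)\,\dd s$. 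Because the integrand has total mass zero, $\phi$ vanishes near both endpoints and so lies in $\mathcal D((a,b))$, with $\phi'=\psi-(\int_a^b\psi)\phi_0$; substituting into $\int_a^b v\phi'=0$ produces $\int_a^b\big(v(t)-\xi\big)\psi(t)\,\dd t=0$, where $\xi:=\int_a^b v(t)\phi_0(t)\,\dd t\in X$. As $\psi$ is arbitrary, the vector-valued fundamental lemma of the calculus of variations (which I would obtain by mollifying $v$ against an approximate identity and invoking the Lebesgue differentiation theorem for the Bochner integral) forces $v\equiv\xi$ a.e., hence $u(t)=\xi+\int_a^t g(s)\,\dd s$ a.e., i.e. $(1)$. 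The delicate points to get right are the justification of Fubini and of commuting $\eta$ with the Bochner integral, and the measurability bookkeeping (essential separability of the range) needed so that the mollification step legitimately yields $v=0$ a.e.
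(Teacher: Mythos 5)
This lemma is stated in the paper's Appendix~C as a direct quotation from Temam (Lemma 1.1, p.~169) with no proof supplied, so there is no in-paper argument to compare against; the comparison can only be with the classical proof in Temam's book. Your proof is correct and is essentially that classical argument: Fubini for the Bochner integral gives $(1)\Rightarrow(2)$, applying functionals $\eta\in X'$ together with Hahn--Banach separation gives $(2)\Leftrightarrow(3)$, and the substantive direction $(2)\Rightarrow(1)$ is handled exactly as in Temam by subtracting the primitive $w(t)=\int_a^t g(s)\,\dd s$ and using the $\phi_0$-normalization (du Bois--Reymond) trick to show that an $L^1(a,b;X)$ function with vanishing weak derivative is a.e.\ equal to a constant vector. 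The technical points you flag — Fubini for Bochner integrals, commuting a bounded functional with the Bochner integral, and the Lebesgue differentiation theorem for Bochner-integrable functions in the mollification step (where strong measurability supplies the essential separability) — are all standard and are precisely the right places to be careful, and your reading of the statement's typos ($f$ for $u$, lower limit $0$ for $a$ with $\xi$ absorbing the base point) is the intended one.
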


\begin{lemma}\cite[Lemma 1.2, p. 176]{Temam}\label{anotherlemma}
Let $V, H$ be a Hilbert space satisfying
\[
V\subset H \subset V',
\]
with $V'$ the dual of $V$.  If $u\in L^2(0,T;V)$ and $\frac{\partial u}{\partial t} \in L^2(0,T;V')$, then $u$ is almost everywhere equal to a function continuous from $[0,T]$ into $H$ and
\[
\frac\dd {\dd t}\abs{u}^2=2\ip{\frac{\partial u}{\partial t},u}
\]
holds in a sense of distributions on $(0,T)$.
\end{lemma}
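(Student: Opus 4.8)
The statement is the classical Lions--Magenes regularity lemma for the Gelfand triple $V\subset H\subset V'$, and the plan is to prove it by time-regularization followed by a Cauchy estimate in $C([0,T];H)$. The one computation that is completely elementary and drives everything is the product identity for smooth curves: if $v,w\in C^1([0,T];V)$ then $t\mapsto (v(t),w(t))_H$ is $C^1$ with derivative $\ip{v'(t),w(t)}+\ip{v(t),w'(t)}$, the pairings being those of $V'$ with $V$, which reduce to the $H$ inner product because $H$ is identified with its own dual and sits between $V$ and $V'$. In particular, taking $v=w$, one has $\frac{\dd}{\dd t}|v|_H^2=2\ip{v',v}$ for smooth $v$.

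First I would regularize in time. After extending $u$ across the endpoints $t=0,T$ so that the extension still lies in $L^2(V)$ with derivative in $L^2(V')$ (a reflection extension works), set $u_\varepsilon=\eta_\varepsilon * u$ for a standard mollifier $\eta_\varepsilon$. Then $u_\varepsilon$ is smooth with values in $V$, $u_\varepsilon\to u$ in $L^2_{loc}(0,T;V)$, and $u_\varepsilon'=\eta_\varepsilon * u'\to u'$ in $L^2_{loc}(0,T;V')$. Applying the smooth identity to the difference $u_\varepsilon-u_\delta$ gives, for all $s,t$,
\[
|u_\varepsilon(t)-u_\delta(t)|_H^2=|u_\varepsilon(s)-u_\delta(s)|_H^2+2\int_s^t \ip{(u_\varepsilon-u_\delta)'(\tau),(u_\varepsilon-u_\delta)(\tau)}\,\dd\tau .
\]
The integral is controlled by $\norm{(u_\varepsilon-u_\delta)'}_{L^2(V')}\norm{u_\varepsilon-u_\delta}_{L^2(V)}$, which tends to $0$; averaging the identity in $s$ over $[0,T]$ and using that $u_\varepsilon$ is Cauchy in $L^2(0,T;H)$ shows that $\{u_\varepsilon\}$ is Cauchy in $C([0,T];H)$. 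Hence $u$ admits a representative in $C([0,T];H)$, which is the first conclusion.

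For the distributional identity I would pass to the limit in the smooth relation $\frac{\dd}{\dd t}|u_\varepsilon|_H^2=2\ip{u_\varepsilon',u_\varepsilon}$ tested against $\phi\in\mathcal D((0,T))$:
\[
-\int_0^T |u_\varepsilon|_H^2\,\phi'\,\dd t=2\int_0^T \ip{u_\varepsilon',u_\varepsilon}\phi\,\dd t .
\]
Since $u_\varepsilon\to u$ in $L^2_{loc}(V)$ and $V\hookrightarrow H$, we get $|u_\varepsilon|_H^2\to|u|_H^2$ in $L^1_{loc}$; and since $u_\varepsilon'\to u'$ in $L^2_{loc}(V')$ while $u_\varepsilon\to u$ in $L^2_{loc}(V)$, the pairing $\ip{u_\varepsilon',u_\varepsilon}\to\ip{u',u}$ in $L^1_{loc}$. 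Passing to the limit yields $-\int_0^T|u|_H^2\phi'=2\int_0^T\ip{u',u}\phi$, that is $\frac{\dd}{\dd t}|u|_H^2=2\ip{u',u}$ in $\mathcal D'((0,T))$.

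The main obstacle is handling the endpoints and, above all, the convergence of the quadratic term $\ip{u_\varepsilon',u_\varepsilon}$: this is a product of a factor in $V'$ with a factor in $V$, so strong $L^2$ convergence of both in the dual pair is essential, and it is precisely what mollification supplies on compact subintervals. Obtaining the uniform bound $\sup_t|u(t)|_H^2\lesssim \norm{u}_{L^2(V)}^2+\norm{u'}_{L^2(V')}^2$ from the averaged difference identity is the quantitative heart of the argument; once it is in place, density of smooth curves and the limit passage are routine. Here Lemma \ref{useful_lemma} may be invoked to identify $u$ with the primitive of $u'$, which streamlines the endpoint bookkeeping.
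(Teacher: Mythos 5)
Your proof is correct. Note that the paper itself offers no proof of this lemma: it is quoted verbatim from Temam \cite[Lemma 1.2, p.~176]{Temam}, and the argument given there is precisely the regularization scheme you carry out (extend, mollify, use the $C^1$ product identity, pass to the limit), so your proposal reconstructs the cited proof rather than diverging from it. One wording to tighten: after the reflection extension, the mollified curves $u_\varepsilon$ converge in $L^2(0,T;V)$ and $u_\varepsilon'$ in $L^2(0,T;V')$ on the whole closed interval $[0,T]$, not merely in $L^2_{loc}(0,T;\cdot)$ as written; this full-interval convergence is what your averaged Cauchy estimate in $C([0,T];H)$ actually requires, and it is exactly what the extension is there to provide.
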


\begin{thm}[Compactness]\cite[Theorem 2.2 p.186]{Temam}\label{compact2}
Let $X_{0}, X, X_{1}$ be Hilbert spaces such that
\[
X_{0}\subset X\subset X_{1}
\]
where the injections are continuous and so that the injection of $X_{0}$ into $X$ is compact.  Let $\gamma>0$ and $\mathcal H^{\gamma}$ denote the space
\[
\mathcal H^{\gamma}:=\mathcal H^{\gamma}(\R: X_{0}, X_{1})=\{v\in L^{2}(\R; X_{0}), D^{\gamma}_{t}v\in L^{2}(\R; X_{1})\},
\]
where $D^{\gamma}_{t}v$ denotes a fractional time derivative of $v$, and with a norm
\[
\norm{v}_{\mathcal H^{\gamma}}=\norm{v}_{L^2(\R;X)}+ \norm{\abs{\tau}^\gamma \hat v}_{L^2(\R;X_1)}.
\]
Then for a bounded set $K\subset \R$, the space $H^{\gamma}_{K}$ given by
\[
\mathcal H^{\gamma}_{K}=\{ u \in \mathcal H^{\gamma}: \supp u \subset K\}
\]
is compactly imbedded into $L^{2}(\R,X)$.
\end{thm}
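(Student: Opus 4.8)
The plan is to show that the embedding is compact by proving that every sequence $\{u_n\}$ which is bounded in $\mathcal H^\gamma_K$ admits a subsequence that is Cauchy in $L^2(\R;X)$. The natural tool is the time Fourier transform together with the Plancherel identity $\norm{u_n-u_m}_{L^2(\R;X)}^2=\int_\R\norm{\hat u_n(\tau)-\hat u_m(\tau)}_X^2\,\dd\tau$, which lets me split the analysis into a low-frequency region $\set{|\tau|\le M}$ and a high-frequency tail $\set{|\tau|>M}$. From the definition of $\mathcal H^\gamma$ (whose elements lie in $L^2(\R;X_0)$) a bounded sequence satisfies $\int_\R\norm{\hat u_n(\tau)}_{X_0}^2\,\dd\tau\le C$ and $\int_\R|\tau|^{2\gamma}\norm{\hat u_n(\tau)}_{X_1}^2\,\dd\tau\le C$, uniformly in $n$. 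The two engines of the proof will be an Ehrling-type interpolation inequality (using the compactness of $X_0\hookrightarrow X$) to handle the tail, and the Arzel\`a--Ascoli theorem (again using that same compactness) to handle the low frequencies.

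First I would establish the Ehrling/Lions lemma: for every $\eta>0$ there is a constant $C_\eta$ with $\norm{w}_X\le\eta\norm{w}_{X_0}+C_\eta\norm{w}_{X_1}$ for all $w\in X_0$. This is proved by contradiction: if it failed for some $\eta>0$, there would be $w_k\in X_0$ with $\norm{w_k}_X=1$ and $\norm{w_k}_X>\eta\norm{w_k}_{X_0}+k\norm{w_k}_{X_1}$; the latter bounds $\{w_k\}$ in $X_0$ and forces $\norm{w_k}_{X_1}\to 0$, so the compact embedding $X_0\hookrightarrow X$ yields a subsequential $X$-limit $w$ with $\norm{w}_X=1$, while $w=0$ in $X_1$ --- impossible since $X\subset X_1$. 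With this inequality in hand, for the tail I estimate $\norm{\hat u_n(\tau)}_X^2\le 2\eta^2\norm{\hat u_n(\tau)}_{X_0}^2+2C_\eta^2\norm{\hat u_n(\tau)}_{X_1}^2$ and integrate over $\set{|\tau|>M}$, using $\norm{\hat u_n(\tau)}_{X_1}^2\le M^{-2\gamma}|\tau|^{2\gamma}\norm{\hat u_n(\tau)}_{X_1}^2$ to obtain $\int_{|\tau|>M}\norm{\hat u_n}_X^2\,\dd\tau\le 2C\eta^2+2C_\eta^2 CM^{-2\gamma}$. Choosing $\eta$ small and then $M$ large makes the tail smaller than any prescribed $\epsilon$, uniformly in $n$; here the positivity of $\gamma$ is exactly what forces the decay.

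For the low-frequency block I would exploit the hypothesis $\supp u_n\subset K$ with $K$ bounded. Writing $\hat u_n(\tau)=\int_K u_n(t)e^{-2\pi i\tau t}\,\dd t$ gives, for every $\tau$, the uniform bound $\norm{\hat u_n(\tau)}_{X_0}\le|K|^{1/2}\norm{u_n}_{L^2(\R;X_0)}\le C$, and an equi-Lipschitz estimate $\norm{\hat u_n(\tau)-\hat u_n(\tau')}_{X_0}\le C'|\tau-\tau'|$ coming from $|e^{-2\pi i\tau t}-e^{-2\pi i\tau' t}|\le 2\pi\sup_{t\in K}|t|\,|\tau-\tau'|$. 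Thus on $[-M,M]$ the maps $\tau\mapsto\hat u_n(\tau)$ are uniformly bounded and equicontinuous as $X_0$-valued functions, and since $X_0\hookrightarrow X$ is compact the set $\set{\hat u_n(\tau)}$ is precompact in $X$ for each fixed $\tau$. The Arzel\`a--Ascoli theorem then yields a subsequence converging in $C([-M,M];X)$, hence in $L^2([-M,M];X)$.

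To finish I would run a diagonal argument: applying the previous step successively on $[-M,M]$ for $M=1,2,\dots$ and diagonalizing produces a single subsequence $\{u_{n'}\}$ for which $\hat u_{n'}$ is Cauchy in $L^2([-M,M];X)$ for every $M$. Given $\epsilon>0$, I first fix $M$ so the uniform tail bound is below $\epsilon/2$, then take $n',m'$ large so the low-frequency part is below $\epsilon/2$; the Plancherel identity then makes $\{u_{n'}\}$ Cauchy in $L^2(\R;X)$, proving compactness. The main obstacle is the high-frequency estimate: one must convert the mere positivity of $\gamma$ into a genuine, $n$-uniform smallness of the tail, and this is precisely where the Ehrling lemma is indispensable, since no interpolation identity $X=[X_0,X_1]_\theta$ is assumed. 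Coordinating the $n$-uniform tail bound with the frequency-dependent Arzel\`a--Ascoli extraction through the diagonal procedure is the other point requiring care.
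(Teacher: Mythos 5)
Your proof is correct, but note first that the paper does not prove this statement at all: it is quoted as a black-box tool from Temam (Theorem 2.2, p.~186 there), so the only meaningful comparison is with Temam's classical argument. Your skeleton matches his in its main load-bearing parts: Plancherel for Hilbert-space-valued $L^2$, a frequency splitting at $|\tau|=M$, and the Ehrling inequality $\|w\|_X \le \eta\|w\|_{X_0}+C_\eta\|w\|_{X_1}$ extracted by contradiction from the compact embedding; your high-frequency estimate, trading the smallness of $\eta$ against the $M^{-2\gamma}$ decay, is exactly the standard one and is where $\gamma>0$ enters. Where you genuinely diverge is the low-frequency block: Temam extracts a weakly convergent subsequence in $\mathcal H^{\gamma}$, notes that $\hat u_n(\tau)=\int_K u_n(t)e^{-2\pi i\tau t}\,\dd t$ then converges weakly in $X_0$ for each fixed $\tau$, hence strongly in the weakest space by compactness of the embedding, and concludes with dominated convergence on $\{|\tau|\le M\}$; you instead prove an equi-Lipschitz estimate for $\tau\mapsto \hat u_n(\tau)$ in $X_0$, invoke the vector-valued Arzel\`a--Ascoli theorem on $[-M,M]$ (pointwise precompactness in $X$ supplied by the compact embedding), and then diagonalize over $M=1,2,\dots$ Both routes are sound: Temam's avoids Arzel\`a--Ascoli and the diagonal extraction at the price of weak-compactness and dominated-convergence arguments, while yours is more quantitative (uniform convergence of the transforms on compact frequency intervals) and exploits the bounded-support hypothesis twice, once for the pointwise $X_0$-bound and once for the Lipschitz estimate. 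One small remark: your uniform bound $\int_{\R}\|\hat u_n(\tau)\|_{X_0}^2\,\dd\tau\le C$ presumes that the $\mathcal H^{\gamma}$-norm controls $\|\cdot\|_{L^2(\R;X_0)}$, which is how Temam defines that norm; the paper's transcription writes $\|v\|_{L^2(\R;X)}$ in the first term of the norm, a harmless slip that your reading implicitly corrects.
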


We emphasize that the following lemma should be a part of the standard working knowledge in functional analysis. However, we present a simple proof for it since it  fully justifies why we are able to select a basis for $\textbf{V}$ \emph{within} $\Lambda_{c,\sigma}^1(\mathbb{H}^2(-a^2))$.

\begin{lemma}\label{peaceofmindlemma}
Consider $V$ to be a separable Banach space equipped with the norm $\|\cdot \|_V$. Let $S$ be some dense subset of $V$. Then, there exists a sequence of elements $\{y_k\}_{k=1}^{\infty} \subset S$ which is a basis for $V$.
\end{lemma}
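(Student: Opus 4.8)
The plan is to prove Lemma~\ref{peaceofmindlemma} by a two-stage construction: first replace the possibly uncountable dense set $S$ by a \emph{countable} dense subset still lying inside $S$, and then thin that countable set out to a linearly independent sequence whose finite linear span remains dense in $V$. Here ``basis'' is understood in the sense actually used in \eqref{totalclosure}, namely a linearly independent sequence whose successive spans exhaust $V$ after taking closure; so what must be arranged is precisely linear independence together with totality.

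First I would invoke separability to fix a countable dense set $\{v_n\}_{n=1}^{\infty}\subset V$. Since $S$ is dense in $V$, for each pair $(n,m)\in \mathbb{Z}^+\times\mathbb{Z}^+$ I may choose $s_{n,m}\in S$ with $\|s_{n,m}-v_n\|_V<\frac{1}{m}$. The countable collection $T=\{s_{n,m}:n,m\in\mathbb{Z}^+\}\subset S$ is then itself dense in $V$: given any $w\in V$ and any $\eps>0$, I pick $v_n$ with $\|w-v_n\|_V<\frac{\eps}{2}$ and an integer $m>\frac{2}{\eps}$, whence $\|w-s_{n,m}\|_V\le \|w-v_n\|_V+\|v_n-s_{n,m}\|_V<\eps$. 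This already produces a countable dense subset of $V$ contained entirely in $S$, which is the conceptual point that makes the statement more than a restatement of separability.

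Next I would enumerate $T=\{t_1,t_2,\dots\}$ and extract a linearly independent subsequence greedily: let $y_1$ be the first nonzero $t_k$, and inductively, once $y_1,\dots,y_j$ have been selected among $t_1,\dots,t_{k_j}$, let $y_{j+1}$ be the first $t_k$ with $k>k_j$ that does \emph{not} belong to $\Span\{y_1,\dots,y_j\}$. In the setting relevant to the application, where $V=\textbf{V}$ is infinite dimensional, $\overline{\Span\,T}=V$ cannot be finite dimensional, so this selection never stalls and yields an infinite sequence $\{y_k\}_{k=1}^{\infty}\subset S$ that is linearly independent by construction.

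Finally I would check totality: every $t_k$ lies in $\Span\{y_1,y_2,\dots\}$, since each $t_k$ is either itself chosen as some $y_j$ or is skipped precisely because it already lies in the span of the previously selected elements. Hence $T\subset \Span\{y_k\}$, so $\Span\{y_k\}$ is dense in $V$, and $\{y_k\}$ is the desired basis. I expect the only mildly delicate point to be the bookkeeping in the greedy step, that is, verifying that the skipped elements are automatically recaptured by the span so that density is not lost; everything else is routine.
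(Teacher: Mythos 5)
Your proposal is correct and follows essentially the same route as the paper's own proof: a countable dense subset of $S$ built by approximating a dense sequence of $V$, followed by discarding (greedily skipping) the elements that lie in the span of the previously kept ones, which preserves totality and yields linear independence. The only cosmetic differences are your tolerance $\frac{1}{m}$ versus the paper's $\frac{1}{2^j}$ and your explicit remark that infinite-dimensionality keeps the selection from stalling, neither of which changes the argument.
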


\begin{proof}
Since $V$ is separable, we can find a sequence $\{v_m\}_{m=1}^{\infty} \subset V$ such that $\{v_m\}_{m=1}^{\infty} $ is dense in $V$. Then, for each pair of integers $m,j \in \mathbb{Z}^+$, the density of the set $S$ in $V$ ensures that there is some element $e_{mj} \in S$ such that $\|e_{mj} - v_m\|_V < \frac{1}{2^j}$.

Next, we check that $\{e_{mj} : m,j \in \mathbb{Z}^+\} \subset S$ is also dense in $V$. For any $v \in V$, and any $\epsilon >0$, the density of $\{v_m\}_{m=1}^{\infty}$ ensures the existence of some integer $m_{\epsilon}$ such that
$\|v_{m_{\epsilon}} - v\|_V < \frac{\epsilon}{2}$. Then let  $j_{\epsilon}$ be some sufficiently large integer so that $\frac{1}{2^{j_{\epsilon}}} < \frac{\epsilon}{2}$. Hence 
\begin{equation*}
\|e_{mj_{\epsilon}} - v\|_V \leq \|e_{m_{\epsilon}j_{\epsilon}} - v_{m_{\epsilon}}\|_V + \|v_{m_{\epsilon}} - v\|_V < \epsilon 
\end{equation*}
as needed.  

For convenience, we need to enumerate  $\{e_{mj}\}$.  Take any bijective map $\Psi : \mathbb{Z}^+ \rightarrow
\mathbb{Z}^+\otimes  \mathbb{Z}^+$ so that
\begin{equation*}
\{e_{mj} : m,j \in \mathbb{Z}^+\} = \{e_{\Psi (k)}\}_{k=1}^{\infty} .
\end{equation*}
Consider
\begin{equation*}
E_{m} = \Span\{e_{\Psi (1)}, e_{\Psi (2)} , ..... e_{\Psi (m)}\} ,
\end{equation*}
then, the density of $\{e_{\Psi (k)}\}_{k=1}^{\infty}$ in $V$ of course implies that the closure of $\bigcup_{m} E_m$ coincides with $V$. That is we have
\begin{equation*}
\overline{\bigcup_{m} E_m} = V 
\end{equation*}
so the sequence $\{e_{\Psi (k)}\}_{k=1}^{\infty}$ is total in $V$.

In order to extract a basis of $V$ from $\{e_{\Psi (k)}\}_{k=1}^{\infty}$, we can carry out a simple procedure to eliminate those linearly dependent elements from $\{e_{\Psi (k)}\}_{k=1}^{\infty}$ in the following way: for each  $m \geq 2$,  eliminate the element $e_{\Psi(m)}$ if it happens that $E_m = E_{m-1}$. Otherwise,  we   keep  $e_{\Psi(m)}$. After carrying out this procedure indefinitely, we arrive at a strictly increasing sequence of positive integers $\{m_k\}_{k=1}^{\infty}$ such that the subsequence $\{e_{\Psi(m_k)}\}_{k=1}^{\infty}$ is linearly independent and also total in $V$. Hence, the sequence $\{e_{\Psi(m_k)}\}_{k=1}^{\infty}$ of elements in $S$ can serve as a basis of $V$, as desired.
\end{proof}

\bibliography{ChiHinMagda}
\bibliographystyle{plain}

\end{document}